\documentclass[a4paper]{amsart}
\usepackage[
bookmarks=true,
colorlinks=true,
linkcolor=blue,
urlcolor=blue,
bookmarks=true,
citecolor=blue,
linktocpage=true,
hyperindex=true
]{hyperref}
\usepackage{color}
\usepackage{amssymb,amsmath,amsthm,amstext,amsfonts}
\usepackage[dvips]{graphicx}
\usepackage{psfrag}
\usepackage{url}
\usepackage{amsfonts}
\usepackage{amsmath}
\usepackage{mathrsfs}
\usepackage{graphicx}
\usepackage{amsmath,amsthm,amssymb,amscd}
\usepackage{enumerate}

\usepackage{epsfig}

\makeatletter \@addtoreset{equation}{section} \makeatother

\renewcommand\thetable{\thesection.\@arabic\c@table}

\theoremstyle{plain}
\newtheorem{maintheorem}{Theorem}

\newtheorem{maincorollary}{Corollary}

\newtheorem{mainproposition}{Proposition}

\newtheorem{mainlemma}{Lemma}

\newtheorem{theorem}{Theorem}[section]
\newtheorem{lemma}[theorem]{Lemma}

\newtheorem{Claim}[theorem]{Claim}

\newtheorem{definition}[theorem]{Definition}
\newtheorem{remark}[theorem]{Remark}

\theoremstyle{remark}

\makeatletter
\@namedef{subjclassname@2020}{2020 Mathematics Subject Classification}
\makeatother

\begin{document}

\title{On the loss of upper semicontinuity of metric entropy for $C^{r}$ diffeomorphisms}

\author{Xinyu Bai, Wanshan Lin, and Xueting Tian}

\address{Xinyu Bai, School of Mathematical Sciences,  Fudan University\\Shanghai 200433, People's Republic of China}
\email{25110180001@m.fudan.edu.cn}

\address{Wanshan Lin, School of Mathematical Sciences,  Fudan University\\Shanghai 200433, People's Republic of China}
\email{wanshanlin@fudan.edu.cn}

\address{Xueting Tian, School of Mathematical Sciences,  Fudan University\\Shanghai 200433, People's Republic of China}
\email{xuetingtian@fudan.edu.cn}

\begin{abstract}
In this article, we give an upper bound estimate for the quantitative loss of upper semicontinuity of metric entropy for $C^r\:(r>1)$ diffeomorphisms. Building on earlier entropy estimates and reparametrization methods, we optimize the upper bound estimate with respect to both dimension and asymptotic Lipschitz constant. Motivated by examples of Newhouse and Buzzi, we show that the estimate is sharp.
\end{abstract}

\keywords{metric entropy, Lyapunov exponents, unstable manifolds, reparametrization}
\subjclass[2020] {37A35; 37B40; 37C40; 37D25}
\maketitle
\section{Introduction}
\subsection{Background and motivation}
The theory of differentiable dynamical systems is concerned with the complex and chaotic behavior of diffeomorphisms on Riemannian manifolds. Metric entropy is a fundamental concept in ergodic theory, which was introduced by Kolmogorov and Sinai. Let $h_{\mu}(f)$ be metric entropy for a measurable map $f$ with respect to an $f$-invariant measure $\mu$.

In general, metric entropy is not lower semicontinuous with respect to invariant measures, even for uniformly hyperbolic diffeomorphisms. On the other hand, upper semicontinuity of metric entropy is fundamental, since it provides a technical sufficient condition for the existence of measures of maximal entropy and equilibrium states. Moreover, it helps to control the variation of entropy under perturbations, thus underpinning robust statistical descriptions of chaotic systems.

To establish upper semicontinuity of metric entropy, one usually works in settings involving diffeomorphisms with certain hyperbolic structures or smoothness. In \cite{CY16,P12,LSW15,L13}, for diffeomorphisms with certain hyperbolic structures, upper semicontinuity of metric entropy is guaranteed by the expansiveness-like property via the results of \cite{B72,N89}, based on the regular geometric structure of the stable and unstable manifolds. In \cite{B97,N89,Y87}, for $C^{\infty}$ diffeomorphisms, upper semicontinuity of metric entropy is ensured by reparametrization methods. In contrast, in \cite{B14,LSW15}, for $C^{r}\:(1<r<+\infty)$ diffeomorphisms, upper semicontinuity of metric entropy may sometimes fail to hold.  

This leads to the following fundamental question: under what conditions is metric entropy upper semicontinuous for $C^r$ diffeomorphisms with $1<r<+\infty$? This question has attracted considerable attention in recent years. For $C^r$ ($r>1$) surface diffeomorphisms, Burguet \cite{B23} established upper semicontinuity of metric entropy specifically at ergodic measures with large entropy. For $C^r$ ($r>1$) diffeomorphisms on a $d$-dimensional compact Riemannian manifold with $d\le 3$, Luo and Yang \cite{LY25-2} proved that the continuity of the sum of positive Lyapunov exponents implies upper semicontinuity of metric entropy.  

A related natural question is how much entropy can be lost when upper semicontinuity fails. By \cite{B97,N89,Y87}, for a $C^r\:(r>1)$ diffeomorphism $f$ and a sequence of $f$-invariant measures $\left\{\mu_n\right\}_{n\in\mathbb N^+}$ converging to an $f$-invariant measure $\mu$ with respect to the weak* topology, one has
$$\limsup_{n\to+\infty}h_{\mu_n}(f)\le h_{\mu}(f)+\frac{d\min\left\{\lambda^+(f),\lambda^+(f^{-1})\right\}}{r},$$
where $\lambda^+(f)=\lim_{n\to+\infty}\frac{1}{n}\log^+\left\|Df^n\right\|_{0}$ denotes the asymptotic logarithmic Lipschitz constant. By \cite{B12}, for a $C^r\:(r>1)$ surface diffeomorphism $f$ and a sequence of $f$-invariant measures $\left\{\mu_n\right\}_{n\in\mathbb N^+}$ converging in the weak* topology to an $f$-invariant measure $\mu$, one has
$$\limsup_{n\to+\infty}h_{\mu_n}(f)\le h_{\mu}(f)+\frac{\min\left\{\lambda^+(f),\lambda^+(f^{-1})\right\}}{r}.$$ 

For a $C^r$ diffeomorphism $f$,
we define the quantitative loss of upper semicontinuity of metric entropy with respect to an $f$-invariant measure $\mu$ by:
\begin{equation*}
\begin{aligned}
e_{\mu}(f):=\sup\left\{ \limsup_{n\to+\infty}h_{\mu_n}(f_n)-h_{\mu}(f):f_n\xrightarrow{C^r}f,\:\mu_n\in\mathcal M(\mathbf M^d,f_n),\:\mu_n\to\mu\:\mathrm{as}\:n\to+\infty\right\}.
\end{aligned}
\end{equation*}
 In this paper, we derive a sharper estimate for $e_{\mu}(f)$ than the previously known bounds in \cite{B12,B97,N89,Y87}, for all $C^r\:(r>1)$ diffeomorphisms $f$, for all $f$-invariant measures $\mu$. We also show that our estimate is optimal for certain examples, for all $r>1$ and $d>1$. Moreover, when $d=2$, these examples form a relatively general class.

\subsection{Settings and results}
Throughout this paper, $(X,\mathrm d)$ denotes a compact metric space, and  $T:X\to X$ denotes a homeomorphism.
Let $\mathcal M(X),\:\mathcal M(X,T),\:\mathcal M^{erg}(X,T)$ be the sets of all Borel probability measures, $T$-invariant Borel probability measures, $T$-invariant and ergodic Borel probability measures on $X$ respectively. Let $\mathbb N^+$ be the set of all positive integers, and let $\mathbb R^{+}$ be the set of all non-negative real numbers. Assume that $\left\{T_n\right\}_{n\in\mathbb N^+}$ is a sequence of homeomorphisms on $X$, then a sequence $\left\{\mu_n\right\}_{n\in\mathbb N^+}$ is called a sequence of
$\left\{T_n\right\}_{n\in\mathbb N^+}$-invariant measures if
$$\forall\:n\in\mathbb N^+,\mu_n\in\mathcal M(X,T_n).$$
Let $C(X)$ be the set of all continuous functions on $X$. We define the weak* metric $\rho(\cdot,\cdot)$ in $\mathcal M(X)$ by
$$\rho(.,.):\mathcal M(X)\times\mathcal M(X)\to\mathbb R^+,\:(\mu,\nu)\mapsto\sum_{i\in\mathbb N^+}\frac{1}{2^i}\left|\int g_id\mu-\int g_id\nu\right|,$$ where $\left\{g_i\right\}_{i\in\mathbb N^+}$ is a dense subset of the unit sphere of $C(X)$. With the metric $\rho$, both $\mathcal M(X)\:\:\mathrm{and}\:\:\mathcal M(X,T)$ are compact metric spaces. We say that a sequence of Borel probability measures $\left\{\mu_n\right\}_{n\in\mathbb N^+}$ converging to a measure $\mu$, if $$\lim_{n\to+\infty}\rho(\mu_n,\mu)=0.$$ 

Let $\mathbf M^d$ be a compact $d$-dimensional Riemannian manifold. For any real number $r\ge1$, $\mathrm{Diff}^r(\mathbf M^d)$ denotes the set of all $C^r$ diffeomorphisms on $\mathbf M^d$. For $f\in\mathrm{Diff}^r(\mathbf M^d)$,  let $\Gamma_{f}$ be the set of points that are regular in the sense of Oseledets. For $x\in\Gamma_{f}$, let
$$\lambda_1(f,x)>\lambda_2(f,x)>\cdots>\lambda_{\ell(f,x)}(f,x)$$
denote its distinct Lyapunov exponents and let
$$\mathrm T_x\mathbf M^d=E^{1}_{f}(x)\oplus\cdots\oplus E^{\ell(f,x)}_{f}(x)$$
be the corresponding decomposition of its tangent space. If $\nu\in \mathcal{M}^{erg}(\mathbf M^d,f)$, the functions $$x\mapsto \ell(f,x),\:x\mapsto\lambda_i(f,x),\:x\mapsto\mathrm{dim}(E^i_{f}(x))$$ 
are constant for $\nu$-$\mathrm{a.e.}\:x\in\mathbf M^d$.

For $i\in\mathbb N^+$ and $\nu\in\mathcal M(\mathbf M^d,f)$, let $$\lambda_i(f,\nu)=\int \lambda_i(f,x)d\nu(x),$$
whenever $\lambda_i(f,x)$ exists for $\nu$-a.e.\:$x\in\mathbf M^d$. 
For $x\in\Gamma_{f}$, let $$E^{u}_{f}(x)=\bigoplus_{\lambda_i(f,x)>0}E^i_{f}(x),$$
and $$d_{\max}^u(f,\mu)=\mathrm{ess\,sup}_{\mu\text{-a.e.\:}x\in\mathbf M^d}\mathrm{dim}E^u_{f}(x):=\min_{A\subset O(\mu),\:\mu(A)=1}\max_{x\in A}\mathrm{dim}E^u_f(x),$$ where $O(\mu)$ denotes the regular set in the sense of Oseledets with respect to $\mu\in\mathcal M(\mathbf M^d,f)$.
For $d\in\mathbb N^+$, $f\in\mathrm{Diff}^r(\mathbf M^d)$ and $\mu\in\mathcal M(\mathbf M^d,f)$, denote $$\lambda_1^+(f,\mu)=\int\lambda_1^+(f,x)d\mu(x),$$  
$$\lambda^+_{\mathrm{max}}(f,\mu)=\left\{\begin{matrix}
 \min\left\{\lambda^+_{1}(f,\mu),\lambda^+_{1}(f^{-1},\mu)\right\} & d\le 2\\
 \int\max\left\{\lambda^+_{1}(f,x),\lambda^+_{1}(f^{-1},x)\right\}d\mu(x)  & d>2
\end{matrix}\right.,$$
 where
$$\lambda_1^+(f,x)=\max\left\{\lambda_1(f,x),0\right\},$$
if $\lambda_1(f,x)$ exists.
\begin{maintheorem}\label{Theorem A}
Let $(f_n)_{n\in\mathbb N^+}$ be a sequence of $C^r$ diffeomorphisms converging $C^r$ to $f\in\mathrm{Diff}^{r}(\mathbf M^d)$ with $r>1$. Assume there is a sequence $\left\{\mu_n\right\}_{n\in\mathbb N^+}$ of $\left\{f_n\right\}_{n\in\mathbb N^+}$-invariant measures converging to an $f$-invariant measure $\mu$. Then, one has 
\begin{equation}
\begin{aligned}\label{(1.1)}
\limsup_{n\to+\infty}h_{\mu_n}(f_n)\le h_{\mu}(f)+\limsup_{n\to+\infty}d_{\max}^u(f_n,\mu_n)\frac{\lambda_{1}^+(f,\mu)}{r}.
\end{aligned}
\end{equation}
Moreover, if\: $\limsup_{n\to+\infty}d^u_{\max}(f_n,\mu_n)=1$, one has
\begin{equation}\label{1.2}
\begin{aligned}
\limsup_{n\to+\infty}h_{\mu_n}(f_n)\le h_{\mu}(f)+\frac{1}{r-1}(\lambda_1^+(f,\mu)-\liminf_{n\to+\infty}\lambda_1^+(f_n,\mu_n)).
\end{aligned}
\end{equation}
\end{maintheorem}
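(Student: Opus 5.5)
The plan is to follow the Yomdin–Buzzi reparametrization scheme, as refined by Burguet for surfaces, but to run it uniformly along the sequence $f_n$ and to keep track of dimension through the unstable dimension $d^u_{\max}(f_n,\mu_n)$ rather than through $d$.

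\emph{Reduction to a local entropy term.} Fix a scale $\varepsilon>0$ and a finite partition $P$ of $\mathbf M^d$ with diameter less than $\varepsilon$ whose boundary is $\mu$-null. For such $P$ the map $(g,\nu)\mapsto h_\nu(g,P)$ is upper semicontinuous under joint convergence $f_n\to f$ in $C^0$ and $\mu_n\to\mu$, because $H_\nu(\bigvee_{k<m}g^{-k}P)$ depends continuously on the pair when $\mu(\partial P)=0$. It therefore suffices to show that the defect $h_{\mu_n}(f_n)-h_{\mu_n}(f_n,P)$ is asymptotically at most the right-hand side of \eqref{(1.1)} (respectively \eqref{1.2}), up to an error that tends to $0$ as $\varepsilon\to0$.

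\emph{Bounding the defect.} By a Ledrappier-type inequality, this defect is bounded by the exponential growth rate of the number of $(m,\delta)$-Bowen balls needed to cover a typical dynamical ball $B_m(x,\varepsilon)$, with $\delta\ll\varepsilon$. Using ergodic decomposition, we may take $\mu_n$ ergodic. Along an orbit segment of a $\mu_n$-typical point, the Pesin unstable manifold, or more precisely the iterates of a $C^r$ disc of dimension $d^u=\dim E^u_{f_n}$ tangent to a cone around $E^u$, carries all the local entropy. The stable directions contribute nothing, since they contract up to subexponential errors. We therefore cover $f_n^k$ of such a $d^u$-disc by $C^r$ reparametrizations $\psi:[0,1]^{d^u}\to\mathbf M^d$ with $\|D^s\psi\|$ bounded for $s\le r$.

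The Yomdin–Burguet lemma (the Algebraic Lemma together with the $C^r$ bounded-distortion step) says that passing from time $k$ to $k+1$ costs at most
\[
C\Bigl(\tfrac{\|Df_n(f_n^kx)\|}{\varepsilon}\Bigr)^{d^u/r}
\]
new charts when the norm is large, and $O(1)$ charts otherwise. These estimates are uniform in $n$ because the $C^r$ norms of $f_n$ are uniformly bounded. Summing logarithms along the orbit, dividing by $m$, and replacing $f_n$ by $f_n^q$ for large $q$, we get the bound
\[
\frac{d^u}{r}\int\frac1q\log^+\|Df_n^q\|\,d\mu_n .
\]
Letting $n\to\infty$ and using $C^1$ convergence together with $\mu_n\to\mu$, then $q\to\infty$ and subadditivity, this becomes $\frac{d^u}{r}\lambda_1^+(f,\mu)$. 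This yields \eqref{(1.1)}.

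\emph{The case $d^u=1$.} When the unstable direction is one-dimensional, we use Burguet's sharper surface estimate. The number of reparametrizations only grows at times where the curve's derivative is large compared with the growth it actually realizes along $\mu_n$. Concretely, the chart count at time $k$ is controlled by
\[
\bigl(\|Df_n\|/\|Df_n|_{E^u}\|\bigr)^{1/(r-1)}
\]
instead of $\|Df_n\|^{1/r}$. Summing along the orbit gives $\frac{1}{r-1}\bigl(\int\log^+\|Df_n^q\|\,d\mu_n/q-\lambda_1^+(f_n,\mu_n)\bigr)$. Passing to the limit as before yields \eqref{1.2}, with $\liminf_n\lambda_1^+(f_n,\mu_n)$ appearing from the subtracted term.

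\emph{Main obstacle.} I expect the hardest step to be this last local estimate. It requires a refined reparametrization lemma in which the new charts are selected according to the geometry of the curve, namely the ratio of the full derivative to the derivative along the curve, and it requires showing that the "bad" times are a vanishing fraction controlled by the exponent gap. A secondary difficulty is uniformity in $n$ of the Pesin and cone estimates. I would bypass Pesin theory by working with $C^r$ discs chosen only for entropy counting, so that only uniform $C^r$ bounds on $f_n$ are needed.
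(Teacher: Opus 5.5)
There is a genuine gap: the step that replaces $d$ by $d^u_{\max}$ is missing. The outer scaffolding is fine: upper semicontinuity of $(g,\nu)\mapsto h_\nu(g,P)$, counting reparametrizations of $f^q$ block by block, reducing to ergodic measures because all terms are affine, then $n\to\infty$ and $q\to\infty$.

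\textbf{The dimension reduction is asserted, not proved.} Your defect bound is Newhouse-type local entropy: it counts $(m,\delta)$-Bowen balls covering the full $d$-dimensional dynamical ball $B_m(x,\varepsilon)$. Reparametrizing that object costs $e^{d\mathbf k/r}$ per block, which gives exactly the classical factor $d$ the theorem is improving. The claim that the unstable manifold ``carries all the local entropy'' because ``stable directions contract up to subexponential errors'' does not hold at the fixed scale $\varepsilon$. The Pesin scale of $\mu_n$-typical points is not bounded below, so there is no product structure inside $B_m(x,\varepsilon)$. Since $Df_n$ varies by $O(\varepsilon)$ across the ball, orbits that $\varepsilon$-shadow $x$ may expand a direction that is only weakly contracted along the orbit of $x$. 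Your remedy makes things worse: you propose to bypass Pesin theory using $d^u$-discs tangent to a cone around $E^u$. Without a dominated splitting there is no invariant cone field, and nothing guarantees that an arbitrary family of $d^u$-discs sees the entropy of $\mu_n$.

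\textbf{The missing idea is the Ledrappier--Young description of entropy through conditional measures on unstable leaves, applied measure by measure.} For an ergodic $\nu$, take a good set $K$, a point $x_0\in K$, and a $C^r$ disc $\sigma:[0,1]^{d_u}\to W^u_{loc}(f,x_0)$ with $\nu_{x_0}(\mathrm{Im}\sigma\cap K)>0$. Then $h_\nu(f)\le\liminf_k\frac1kH_{\nu_{x_0,K,\mathrm{Im}\sigma}}(\mathcal P^k)+\tau$ for all sufficiently fine $\mathcal P$ (Lemma~\ref{Lemma 2.5}). One then splits $H(\mathcal P^k)\le H(\mathcal Q^k)+H(\mathcal P^k\mid\mathcal Q^k)$ using the coarse partition $\mathcal Q$:
\begin{itemize}
\item The first term is bounded by $\frac1mH_\nu(\mathcal Q^m)$, via Lemma~\ref{Lemma 2.3} and the equidistribution $\frac1k\sum_{i<k}f^i_*\nu_{x_0,K,\mathrm{Im}\sigma}\to\nu$. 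This plays the role of your $h_{\mu_n}(f_n,P)$ and passes to the limit by Lemma~\ref{Lemma 2.2}.
\item Only the conditional term is estimated by reparametrization, and what gets reparametrized is the $d_u$-dimensional disc $\sigma$ over $\sigma^{-1}(F\cap K)$, $F\in\mathcal Q^k$. Lemma~\ref{2.6} then costs $e^{d_u\mathbf k/r}$ with $\mathbf k\approx\log^+\|Df^q\|$, the full derivative, which is where $\lambda_1^+$ comes from.
\item For $d_u=1$, Lemma~\ref{Lemma B} costs $e^{(\mathbf k-\mathbf k')/(r-1)}$. Here $\mathbf k'$ is the derivative along the curve, and it equals the derivative along $E^u$ only because $\sigma$ lies in an unstable manifold. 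So Pesin theory is indispensable for \eqref{1.2} as well.
\end{itemize}

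\textbf{Two smaller points.} Your uniformity worry is unnecessary. The Pesin data $K,x_0,\sigma,\rho$ are used only qualitatively for each fixed measure. The quantitative inputs $\varepsilon_q(g)$, $C_{r,d_u,d}$ and $\mathcal V_g$ depend only on $C^r$ bounds. Also, your chart count $C(\|Df_n\|/\varepsilon)^{d^u/r}$ would leave an error of order $\frac{d^u}{rq}\log(1/\varepsilon_q)$. This does not vanish, because the admissible scale for $f^q$ in general shrinks exponentially in $q$. At admissible scales the count must be independent of $\varepsilon$, as in Lemma~\ref{2.6}.
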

Even in the case \(d=2\), Theorem \ref{Theorem A} provides the sharpest available estimate for the quantitative loss in the upper semicontinuity of metric entropy, improving the results in \cite{B12,B97,N89,Y87}. Compared with \cite{B97,N89,Y87}, our upper bound for \(e_\mu(f)\) replaces the ambient dimension \(d\) by
\[
\limsup_{n\to+\infty} d^u_{\max}(f_n,\mu_n).
\]
 Moreover, we replace the asymptotic Lipschitz constant \(\lambda^+(f)\) by the positive part of the maximal Lyapunov exponent of \(\mu\) with respect to \(f\). Neither of these refinements can be obtained using the techniques and strategies developed in \cite{B12,B97,N89,Y87}.

Under the assumptions of Theorem A, when \(d\le 3\) and
\[
\limsup_{n\to+\infty} d^u_{\max}(f_n,\mu_n)=1,
\]
formula \eqref{1.2} was proved by Luo and Yang in \cite{LY25-2}. A similar result is implicit in \cite{BL21}, while the case \(d=1\) was established in \cite{B17}. 

 Corollary \ref{Corollary A} is a direct consequence of Theorem \ref{Theorem A}. It makes the dimensional improvement in Theorem \ref{Theorem A} more transparent and clarifies its consequences for the quantitative loss in the upper semicontinuity of topological entropy. The key observation is that every ergodic component of an invariant measure either has unstable manifolds of dimensions less than or equal to \([\frac{d}{2}]\) or stable manifolds of dimensions less than or equal to \([\frac{d}{2}]\).
\begin{maincorollary}\label{Corollary A}
Let $(f_n)_{n\in\mathbb N^+}$ be a sequence of $C^r$ diffeomorphisms converging $C^r$ to $f\in\mathrm{Diff}^{r}(\mathbf M^d)$ with $r>1$. Assume there is a sequence $\left\{\mu_n\right\}_{n\in\mathbb N^+}$ of $\left\{f_n\right\}_{n\in\mathbb N^+}$-invariant measures converging to an $f$-invariant measure $\mu$. Then, one has
\begin{equation}
\limsup_{n\to+\infty}h_{\mu_n}(f_n)\le h_{\mu}(f)+[\frac{d}{2}]\frac{\lambda^+_{\max}(f,\mu)}{r},
\end{equation}
\begin{equation}
\limsup_{n\to+\infty}h_{\mathrm{top}}(f_n)\le h_{\mathrm{top}}(f)+[\frac{d}{2}]\frac{\sup_{\mu\in\mathcal M(\mathbf M^d,f)}\lambda^+_{\max}(f,\mu)}{r}.
\end{equation}
In other word, 
$$e_{\mu}(f)\le [\frac{d}{2}]\frac{\lambda^+_{\max}(f,\mu)}{r}.$$
\end{maincorollary}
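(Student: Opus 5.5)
We derive Corollary \ref{Corollary A} from Theorem \ref{Theorem A}, applied both to $f_n\to f$ and to $f_n^{-1}\to f^{-1}$, after a decomposition of the measures. We use that $h_{\nu}(g)=h_\nu(g^{-1})$, that $f_n^{-1}\to f^{-1}$ in $C^r$, and that the unstable bundle of $f^{-1}$ at a regular point is the stable bundle of $f$. Since $\dim E^u_{g}(x)+\dim E^u_{g^{-1}}(x)\le d$, at every regular point one of the two dimensions is at most $[\frac d2]$.

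\textbf{Step 1 (splitting).} For each $n$ we split $\mu_n$ via its ergodic decomposition as $\mu_n=a_n\mu_n^1+(1-a_n)\mu_n^2$. Here $\mu_n^1$ is carried by the $f_n$-invariant set $\{x:\dim E^u_{f_n}(x)\le[\frac d2]\}$, and $\mu_n^2$ by its complement, on which $\dim E^u_{f_n^{-1}}(x)\le[\frac d2]$. Both pieces are invariant, and entropy is affine: $h_{\mu_n}(f_n)=a_nh_{\mu_n^1}(f_n)+(1-a_n)h_{\mu_n^2}(f_n)$. We pass to a subsequence realizing the $\limsup$ of $h_{\mu_n}(f_n)$ along which $a_n\to a$, $\mu_n^1\to\nu^1$ and $\mu_n^2\to\nu^2$. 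Then $\mu=a\nu^1+(1-a)\nu^2$ with $\nu^i\in\mathcal M(\mathbf M^d,f)$. If $a\in\{0,1\}$, only one piece matters. In that case the other piece contributes at most $(1-a_n)\sup_n h_{\mu^i_n}(f_n)\to0$, because entropies are uniformly bounded by Ruelle's inequality and the $C^1$ bounds.

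\textbf{Step 2 (apply the theorem).} By \eqref{(1.1)} for $f_n$ on the first piece, $\limsup h_{\mu_n^1}(f_n)\le h_{\nu^1}(f)+[\frac d2]\lambda_1^+(f,\nu^1)/r$. By \eqref{(1.1)} for $f_n^{-1}$ on the second piece, $\limsup h_{\mu_n^2}(f_n)\le h_{\nu^2}(f)+[\frac d2]\lambda_1^+(f^{-1},\nu^2)/r$. Combining these and using affinity of $h$ gives
\[\limsup h_{\mu_n}(f_n)\le h_\mu(f)+\tfrac{[d/2]}{r}\bigl(a\lambda_1^+(f,\nu^1)+(1-a)\lambda_1^+(f^{-1},\nu^2)\bigr).\]
For $d>2$ the bracket is at most $\int\max\{\lambda_1^+(f,x),\lambda_1^+(f^{-1},x)\}d\mu$, because $\lambda_1^+(f,\cdot)$ is integrated against a component of $\mu$ and the integrand is dominated by the max.

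For $d\le2$ we have $[\frac d2]\le1$, so no splitting is needed. Theorem \ref{Theorem A} applied directly to $f_n$ and to $f_n^{-1}$ gives both bounds with $\lambda_1^+(f,\mu)$ and $\lambda_1^+(f^{-1},\mu)$, since $d^u_{\max}\le1$ in each case. We may assume this because if $\dim E^u=2$ on a set of positive measure, $\dim E^u_{f^{-1}}=0$ there. This case actually needs the splitting argument with dimension $\le1$ applied to both, which costs at most $\lambda^+$ of the relevant piece. Alternatively, for surfaces, the components with $\dim E^u_{f_n}=2$ are sources, so they are periodic orbits with zero entropy. Taking the minimum then gives the $d\le2$ definition of $\lambda^+_{\max}$.

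\textbf{Step 3 (topological entropy).} The second inequality follows from the variational principle. For each $n$ we choose $\mu_n$ with $h_{\mu_n}(f_n)>h_{\rm top}(f_n)-\frac1n$ and extract a convergent subsequence $\mu_n\to\mu\in\mathcal M(\mathbf M^d,f)$. This limit is invariant because $f_n\to f$ uniformly. We then bound by $h_\mu(f)\le h_{\rm top}(f)$ and take the sup over $\mu$.

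\textbf{Main obstacle.} The delicate point is the justification in Step 1 that Theorem \ref{Theorem A} may be applied to the pieces $\mu_n^i$, together with the degenerate weights $a_n\to0$, where $\mu_n^i$ may not have controlled limits. The plan handles this with the uniform entropy bound. Measurability of the splitting follows from measurability of $x\mapsto\dim E^u_{f_n}(x)$ on the Oseledets set.
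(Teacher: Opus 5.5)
Your argument is correct and is essentially the paper's. You split $\mu_n$ according to the dimension of $E^u_{f_n}$, using a coarser two-piece split at the threshold $[\frac d2]$ rather than the paper's decomposition by the pair (number of positive exponents, number of negative exponents). You then apply Theorem \ref{Theorem A} to $f_n$ or to $f_n^{-1}$ on each piece and dominate the resulting $\lambda_1^+$ terms by the pointwise maximum. For $d=2$, you discard sources and sinks via Ruelle's inequality, apply Theorem \ref{Theorem A} in both time directions and take the minimum, exactly as the paper does.

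The only blemish is the opening of your $d\le 2$ paragraph. There you claim that $d^u_{\max}(f_n,\mu_n)\le 1$ may be assumed without splitting, which is false: a source component gives $d^u_{\max}=2$. You correct this yourself, and the argument you end with is the right one.
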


For $\Lambda\subset\mathbf M^d$, let
$$\mathcal M(\Lambda,f)=\left\{\mu\in\mathcal M(\mathbf M^d,f):\mu(\Lambda)=1\right\}.$$
Assume that $\Lambda$ is $f$-invariant, that is, $f(\Lambda)=\Lambda$. A splitting $T_{\Lambda}\mathbf M^d=E_1\oplus E_2$ is said to be measurable and $f$-invariant, if the following hold:
$$\Lambda\ni x\mapsto E_i(x)\:\mathrm{is\:measurable},\:i=1,2,$$
$$D_{x}f(E_i(x))=E_i(fx),\:\forall\:x\in\Lambda.$$
Corollary \ref{Corollary B} follows from \eqref{1.2} in Theorem \ref{Theorem A}. In contrast to the results in \cite{CY16,LSW15,LMZ24}, we do not require the diffeomorphism to admit a dominated splitting. Instead, we impose stronger assumptions on the dimension of its unstable bundle.

\begin{maincorollary}\label{Corollary B}
Assume that $f$ is a $C^r$ ($r>1$) diffeomorphism on $\mathbf M^d$, and let $\Lambda\subset \mathbf M^d$ be an $f$-invariant set on which there exists a measurable $f$-invariant splitting $T_{\Lambda}\mathbf M^d = E^{cu} \oplus E^{cs}$, such that\\
$\bullet$ for any $z\in\Lambda$, $\mathrm{dim}E^{cu}(z)=1$;\\
$\bullet\:z\mapsto E^{cu}(z)$ is continuous on $\Lambda$;\\
$\bullet$ for any $z\in\Lambda$,  one of the following two alternatives holds:\\
(1)
$$\liminf_{n\to\pm\infty}\frac{1}{n}\log\left\|D_zf^n|_{E^{cu}(z)}\right\|\ge0,$$
$$\limsup_{n\to\pm\infty}\frac{1}{n}\log\left\|D_zf^n|_{E^{cs}(z)}\right\|<0,$$
(2)
$$\liminf_{n\to\pm\infty}\frac{1}{n}\log\left\|D_zf^n|_{E^{cu}(z)}\right\|>0,$$
$$\limsup_{n\to\pm\infty}\frac{1}{n}\log\left\|D_zf^n|_{E^{cs}(z)}\right\|\le0.$$
Assume there is a sequence $\left\{\mu_n\right\}_{n\in\mathbb N^+}$ of measures with 
$$\forall\:n\in\mathbb N^+,\:\mu_{n}\in\mathcal M(\Lambda,f),$$ 
converging to a measure $\mu\in\mathcal M(\Lambda,f)$. Then, one has
\begin{equation*}
\limsup_{n\to+\infty}h_{\mu_n}(f)\le h_{\mu}(f).
\end{equation*}

\end{maincorollary}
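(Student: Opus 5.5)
Corollary~\ref{Corollary B} follows from \eqref{1.2} of Theorem~\ref{Theorem A} with the constant sequence $f_n=f$, applied to $f$ or to $f^{-1}$. We have $h_{\mu}(f)=h_{\mu}(f^{-1})$, and the invariant measures of $f$ and $f^{-1}$ coincide. So it suffices to arrange that, for either $f$ or $f^{-1}$, the unstable dimension of each $\mu_n$ is at most one and the top exponents satisfy $\lambda_1^+(\cdot,\mu)\le\liminf_n\lambda_1^+(\cdot,\mu_n)$. Then the correction term $\frac{1}{r-1}(\lambda_1^+(f,\mu)-\liminf\lambda_1^+(f,\mu_n))$ is $\le 0$.

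First we control the unstable dimension. Take an ergodic component $\nu$ of $\mu_n$, supported on $\Lambda$. Under alternative (1), $E^{cs}$ has negative exponents, so $\dim E^u_f\le \dim E^{cu}=1$. Under alternative (2), $E^{cu}$ has positive exponent and $E^{cs}$ has exponents $\le 0$, so again $\dim E^u_f=1$. Hence $d^u_{\max}(f,\mu_n)\le 1$. If $d^u_{\max}(f,\mu_n)=0$ along a subsequence, then by Ruelle's inequality $h_{\mu_n}(f)=0$ there, and those terms are harmless. Otherwise we are in the case of \eqref{1.2}; if needed, pass to the $\limsup$-realizing subsequence.

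Second, and this is the key step, we compare exponents. For a regular point $z$ in an ergodic component, the exponent along $E^{cu}$ is $\lambda^{cu}(z)=\lim\frac1n\log\|D_zf^n|_{E^{cu}}\|\ge 0$. Because of the splitting and the sign conditions, $E^{cu}$ carries the top exponent: every exponent on $E^{cs}$ is $\le 0\le\lambda^{cu}$, and any positive exponent lives in $E^{cu}$. Hence $\lambda_1^+(f,z)=\lambda^{cu}(z)$. Since $E^{cu}$ is one-dimensional and continuous on $\Lambda$, the function $\phi(z)=\log\|D_zf|_{E^{cu}(z)}\|$ is continuous on $\Lambda$. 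By Birkhoff's theorem, $\lambda_1^+(f,\nu)=\int\phi\,d\nu$ for every $\nu\in\mathcal M(\Lambda,f)$; this uses that the $\mu$-integral of $\lambda^{cu}$ equals $\int\phi\,d\mu$ by additivity of the one-dimensional cocycle.

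The main obstacle is passing to the weak* limit with $\phi$ continuous only on $\Lambda$, which may not be closed. I would extend $\phi$ continuously to $\mathbf M^d$ by Tietze's theorem on the closure. Continuity of $E^{cu}$ on $\Lambda$ should give a continuous extension to $\overline\Lambda$; if it does not, I would restrict to the closure of the supports, using that $\mu(\Lambda)=1$. Then $\int\phi\,d\mu_n\to\int\phi\,d\mu$, so
$$\lim_n\lambda_1^+(f,\mu_n)=\lambda_1^+(f,\mu).$$
The correction term in \eqref{1.2} therefore vanishes, which gives $\limsup_n h_{\mu_n}(f)\le h_\mu(f)$.
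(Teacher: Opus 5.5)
Your argument is correct in outline and takes a more direct route than the paper.

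\textbf{Comparison with the paper.} The paper first splits each measure on $\Lambda$ as $\beta\nu_1+(1-\beta)\nu_0$, with $\nu_1$ carried by $\Lambda_1=\{\lambda^{cu}>0\}$ and $h_{\nu_0}(f)=0$ (Claim~\ref{Claim 4.1}). It passes to limits $\mu_n^1\to\mu^1$ of the pieces and shows $\lambda_1(f,\mu_n^1)\to\lambda_1(f,\mu^1)$ using continuity of $z\mapsto\log\|D_zf|_{E^{cu}(z)}\|$. It then applies \eqref{1.2} to $\mu_n^1\to\mu^1$ and recombines by affinity of entropy.

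You instead observe that $\lambda_1^+(f,z)=\lambda^{cu}(z)$ at every regular point of $\Lambda$, including where $\lambda^{cu}(z)=0$. Hence $\lambda_1^+(f,\nu)=\int\phi\,d\nu$ for all $\nu\in\mathcal M(\Lambda,f)$, and you can apply \eqref{1.2} to $\mu_n\to\mu$ itself. The only case left over is $d^u_{\max}(f,\mu_n)=0$ eventually, which Ruelle's inequality disposes of.

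This avoids the decomposition and any bookkeeping about the limits of the pieces. The paper's decomposition mainly serves to ensure that the measures fed into \eqref{1.2} have exactly one positive exponent. Your remark about $f^{-1}$ in the first paragraph is never used and can be dropped.

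\textbf{The step that does not work as written.} Your treatment of the non-closedness of $\Lambda$ fails.
\begin{itemize}
\item Tietze's theorem needs a closed domain.
\item Continuity of $E^{cu}$ on $\Lambda$ gives no continuous extension of $\phi$ to $\overline\Lambda$ in general, because the line field may oscillate as one approaches points of $\overline\Lambda\setminus\Lambda$.
\item Passing to closures of supports does not help, since $\mathrm{supp}\,\mu_n$ is closed and need not lie in $\Lambda$.
\end{itemize}

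No extension is needed. Since $\mu_n(\Lambda)=\mu(\Lambda)=1$, every relatively open set $U=G\cap\Lambda$ (with $G$ open in $\mathbf M^d$) satisfies
$$\liminf_{n\to+\infty}\mu_n(U)=\liminf_{n\to+\infty}\mu_n(G)\ge\mu(G)=\mu(U).$$
By the portmanteau theorem on the metric space $\Lambda$, the restrictions of $\mu_n$ converge weakly to that of $\mu$ there. Therefore $\int\phi\,d\mu_n\to\int\phi\,d\mu$ for every bounded function $\phi$ continuous on $\Lambda$. Your $\phi$ is bounded, since $-\log\|Df^{-1}\|_0\le\phi\le\log\|Df\|_0$.

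This is also what the paper's brief appeal to continuity, boundedness and Birkhoff's theorem implicitly relies on. With this replacement your proof is complete.
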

Corollary \ref{Corollary C} follows from the classical fact that the set of continuity points of an upper semicontinuous function on a compact metric space is residual.
\begin{maincorollary}\label{Corollary C}
Assume that $f\in\mathrm{Diff}^r(\mathbf M^d)\:(r>1)$, such that for any $\mu\in\mathcal M^{erg}(\mathbf M^d,f)$, one of the following two alternatives holds:\\
$\bullet\:$$\mu$ has exactly $0\:\mathrm{or}\:1$ positive Lyapunov exponent;\\
$\bullet\:$$\mu$ has exactly $0\:\mathrm{or}\:1$ negative Lyapunov exponent.\\
Then there exists a residual subset $\mathcal M$ of $\mathcal M(\mathbf M^d,f)$, such that for any $\nu\in\mathcal M$, the entropy map $\mu\mapsto h_{\mu}(f)$ is upper semicontinuous at $\nu$.
\end{maincorollary}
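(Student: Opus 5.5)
The plan is to deduce Corollary \ref{Corollary C} from Corollary \ref{Corollary A} together with the classical fact that a function of Baire class one has a residual set of continuity points. For each $\mu\in\mathcal M(\mathbf M^d,f)$ define
$$\bar h(\mu):=\limsup_{\nu\to\mu}h_{\nu}(f)=\inf_{\varepsilon>0}\sup\left\{h_\nu(f):\nu\in\mathcal M(\mathbf M^d,f),\ \rho(\nu,\mu)<\varepsilon\right\},$$
the upper semicontinuous envelope of the entropy map; here $\bar h\ge h$. Upper semicontinuity of $\mu\mapsto h_\mu(f)$ at $\nu$ is equivalent to $\bar h(\nu)=h_\nu(f)$. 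Since $\bar h$ is upper semicontinuous on the compact metric space $\mathcal M(\mathbf M^d,f)$, its set of continuity points is residual. It therefore suffices to show that at every continuity point $\nu$ of $\bar h$ we have $\bar h(\nu)=h_\nu(f)$.

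First I will show that the hypothesis implies an upper semicontinuity estimate with constant $0$. Apply \eqref{1.2} of Theorem \ref{Theorem A} with $f_n=f$, either to $f$ or to $f^{-1}$, after splitting measures into ergodic components with $d^u\le1$ and those with $d^s\le1$. The right-hand side of \eqref{1.2} becomes $\frac{1}{r-1}\bigl(\lambda_1^+(f,\mu)-\liminf_n\lambda_1^+(f,\mu_n)\bigr)$. Since $\lambda_1^+(f,\cdot)$ is upper semicontinuous as an infimum of continuous functions $\frac1n\int\log\|Df^n\|$, this defect is nonnegative. Moreover it vanishes exactly when $\lambda_1^+$ is continuous along the sequence. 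The main obstacle is exactly this step: the defect term is not zero in general. Nor can the splitting of measures into ``$d^u\le1$'' and ``$d^s\le1$'' parts be made weak$^*$-continuous.

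My plan to overcome it is to apply the residual argument to a single combined upper semicontinuous function rather than to $\bar h$ alone. Take $\Phi(\mu)=h_\mu(f)+\frac{1}{r-1}\bigl(\lambda_1^+(f,\mu)+\lambda_1^+(f^{-1},\mu)\bigr)$, or a similar sum. Using \eqref{1.2} for $f$ and for $f^{-1}$ on the two parts of the ergodic decomposition, and the affinity of entropy and of $\lambda_1^+$, I expect to show that $\Phi$ is upper semicontinuous. Then $\lambda_1^+(f,\cdot)$ and $\lambda_1^+(f^{-1},\cdot)$ are upper semicontinuous, so each has a residual set of continuity points. Intersect these sets with the residual continuity set of $\Phi$. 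At a point $\nu$ of the intersection, $h=\Phi-\frac{1}{r-1}(\lambda_1^+(f,\cdot)+\lambda_1^+(f^{-1},\cdot))$ is a difference of functions continuous at $\nu$. Hence $h$ is continuous, in particular upper semicontinuous, at $\nu$. A countable intersection of residual sets is residual, which yields the set $\mathcal M$.

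The delicate points are these. First, I must check that the ergodic-decomposition splitting is compatible with weak$^*$ limits when proving upper semicontinuity of $\Phi$; I would pass to subsequences where the weights and the normalized component measures converge. Second, I must check that \eqref{1.2} applies to each normalized component. If this fails, the fallback is to use the simpler observation that $\bar h-h$ is controlled by the defects of $\lambda_1^+(f^{\pm1},\cdot)$, which vanish on their residual continuity sets.
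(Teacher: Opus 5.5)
Your argument is correct, but it uses a different generic condition from the paper's.

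\textbf{The paper's route.} The paper works with the single upper semicontinuous function $\lambda^+_{\Sigma}(f,\cdot)$, the sum of positive exponents, which is upper semicontinuous via the subadditive sequence $\max_k\log^+\|\wedge^k Df^n\|$. In Claim 4.2 it shows that entropy is upper semicontinuous along any sequence on which $\lambda^+_{\Sigma}$ converges:
\begin{enumerate}
\item it splits $\mu_n$ into the four pieces with $0$ or $1$ positive or negative exponents;
\item upper semicontinuity of $\lambda^+_{\Sigma}$ on each piece forces piecewise convergence;
\item on the one-positive-exponent piece it squeezes $\lambda^+_{\Sigma}=\lambda_1^+$ to get convergence of $\lambda_1^+$ there;
\item it transfers everything to $f^{-1}$ through $\lambda^+_{\Sigma}(f^{-1},\cdot)=\lambda^+_{\Sigma}(f,\cdot)-\int\log|\det Df|$;
\item it applies \eqref{1.2} with zero defect.
\end{enumerate}

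\textbf{Your route.} You work directly with $\lambda_1^+(f,\cdot)$ and $\lambda_1^+(f^{-1},\cdot)$. Along a convergent subsequence, the global defect $\lambda_1^+(f,\mu)-\lim\lambda_1^+(f,\mu_n)$ equals $\beta$ times the defect on the $u$-piece plus $(1-\beta)$ times the defect on the $s$-piece. Both piecewise defects are nonnegative by upper semicontinuity, so they vanish at continuity points. This avoids the exterior-power argument, the determinant identity and the squeeze. Your fallback is the cleanest form of this: $\bar h(\nu)-h_\nu(f)\le\frac{1}{r-1}\bigl(D_f(\nu)+D_{f^{-1}}(\nu)\bigr)$, where $D_{f^{\pm1}}$ is the oscillation defect of $\lambda_1^+(f^{\pm1},\cdot)$. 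The $\Phi$ version also gives continuity, not just upper semicontinuity, of $h$ on the residual set.

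\textbf{What each buys.} The paper's route gives one intrinsic criterion, continuity of the sum of positive exponents, of the same type as the Luo--Yang result quoted in the introduction. Yours needs two generic conditions and some subsequence bookkeeping, but is shorter.

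\textbf{Repairs.}
\begin{itemize}
\item The opening reduction to continuity points of $\bar h$ is never used, and you never verify $\bar h=h$ there. Drop it.
\item $\lambda_1^+(f,\mu)$ is the infimum of $\frac1n\int\log^+\|Df^n\|\,d\mu$. The infimum of $\frac1n\int\log\|Df^n\|\,d\mu$ is $\int\lambda_1\,d\mu$ instead.
\item \eqref{1.2} pairs $\limsup h$ with $\liminf\lambda_1^+$. So to get upper semicontinuity of $\Phi$, pass to a subsequence along which $h_{\mu_n^u}(f)+\frac{1}{r-1}\lambda_1^+(f,\mu_n^u)$ attains its $\limsup$ and $\lambda_1^+(f,\mu_n^u)$ converges. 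Handle subsequences with $d^u_{\max}=0$ by the Ruelle inequality, and do the same for the $s$-piece with $f^{-1}$.
\end{itemize}
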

Similar to the proof of \cite[Theorem 6.1]{DN05} and \cite{LSW15}, one has, in the Newhouse domain, there exists an abundance of diffeomorphisms admitting an ergodic measure whose loss of the upper semicontinuity of metric entropy attains the critical threshold. Let's first introduce the basic concepts of the Newhouse domain.

Let \(\Lambda\) be a hyperbolic basic set with respect to $f\in\mathrm{Diff}^r(\mathbf M^2)$. A non-degenerate tangency point
\(q\) with respect to the unstable manifold \(W^u(f,x)\) at $x$ and the stable manifold \(W^s(f,y)\) at $y$, for some \(x,y\in \Lambda\), is called a
non-degenerate homoclinic tangency associated with \(\Lambda\) of \(f\). We say that \(\Lambda\) is a wild hyperbolic set if it exhibits a persistent
homoclinic tangency. More precisely, there exists a neighborhood
\(\mathcal U(f)\subset \mathrm{Diff}^r(\mathbf M^2)\) of \(f\) such that, for every
\(g\in \mathcal U(f)\), the continuation \(\Lambda_g\) of \(\Lambda\) has a
non-degenerate homoclinic tangency. In \cite[Theorem 1]{N79}, it has been proved that if \(f\) has a hyperbolic basic set \(\Lambda\) with a homoclinic
tangency \(q\), \(f\) can be perturbed into an open set
\(\mathcal N\subset \mathrm{Diff}^r(\mathbf M^2)\), \(r\ge 2\), such that every
\(g\in \mathcal N\) has a wild hyperbolic set near
\(\operatorname{Orb}(q,f)\).
The open set \(\mathcal N\) is usually called a Newhouse domain, or a
Newhouse open set.

\begin{maintheorem}\label{Corollary E}
Let $2\le r<+\infty$ and $\mathcal N\subset \mathrm{Diff}^r(\mathbf M^2)$ be a Newhouse domain. Then there exists a residual subset $\mathcal R\subset \mathcal N$ such that for any $f\in\mathcal R$, there exist $\mu\in\mathcal M^{erg}(\mathbf M^2,f)$ and a sequence of $f$-invariant and ergodic measures $\left\{\mu_n\right\}_{n\in\mathbb N^+}$ with $\mu_n\to\mu$, as $n\to+\infty$, such that
$$\lim_{n\to+\infty}h_{\mu_n}(f)=h_{\mu}(f)+\frac{\lambda^+_{\max}(f,\mu)}{r}.$$
\end{maintheorem}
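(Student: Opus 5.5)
The plan is to prove the two inequalities separately. The upper bound $\limsup h_{\mu_n}(f)\le h_\mu(f)+\lambda^+_{\max}(f,\mu)/r$ holds for every $f$ and every sequence, by Corollary \ref{Corollary A} with $d=2$ (so $[\frac d2]=1$) and $f_n\equiv f$. The real task is to construct, for residual $f\in\mathcal N$, a limit measure $\mu$ and measures $\mu_n\to\mu$ that attain this bound. Following \cite[Theorem 6.1]{DN05} and \cite{LSW15}, I will take $\mu$ to be a measure supported on a periodic orbit, or a limit of such, which is a sink or a saddle of the wild set with a Lyapunov exponent close to a prescribed value. Then $h_\mu(f)=0$, and the target becomes
\[
\lim_n h_{\mu_n}(f)=\frac{\lambda^+_{\max}(f,\mu)}{r}.
\]

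\textbf{Step 1 (Newhouse/Downarowicz--Newhouse mechanism).} For $g\in\mathcal N$, a periodic point $p$ of the wild hyperbolic set has a homoclinic tangency that can be produced by an arbitrarily $C^r$-small perturbation. Near the tangency, a $C^r$-small bump of size $\varepsilon$ on a region of diameter $\delta$ (so $\varepsilon\sim\delta^r$) creates a small horseshoe $K$. The orbits of $K$ spend most of their time, about $N$ iterates, near the orbit of $p$. Counting branches gives $h_{\mathrm{top}}(g|_K)\approx \frac{\lambda^u(p)}{r}\cdot\frac{N}{N+N_0}$, where the number of branches is roughly $\delta^{-1}$ and the return time satisfies $e^{N\lambda^u(p)}\sim\delta^{-r}$. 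The maximal entropy measure of $K$ is then $\rho$-close to the periodic measure $\mu_p$. Using $\lambda^+_{\max}=\min\{\lambda_1^+(f,\mu_p),\lambda_1^+(f^{-1},\mu_p)\}$, I will choose $p$ so that $\min\{\lambda^u,|\lambda^s|\}$ is the expanding exponent, which may require working with $f^{-1}$ in the dissipative case. This gives entropy $\ge \lambda^+_{\max}(g,\mu_p)/r-\eta$, and these horseshoes persist in an open set.

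\textbf{Step 2 (Baire argument).} For $k\in\mathbb N^+$, let $\mathcal O_k$ be the set of $g\in\mathcal N$ having a hyperbolic periodic orbit $p_g$ and a horseshoe $K_g$, both varying continuously on an open set, such that
\[
\rho(\nu_g,\mu_{p_g})<\frac1k \quad\text{and}\quad \Big|h_{\nu_g}(g)-\frac{\lambda^+_{\max}(g,\mu_{p_g})}{r}\Big|<\frac1k
\]
for some ergodic $\nu_g$ on $K_g$. Step 1 shows $\mathcal O_k$ is open and dense, so $\mathcal R=\bigcap_k\mathcal O_k$ is residual. The main obstacle is that the periodic orbits $p_g^{(k)}$ change with $k$, so for $f\in\mathcal R$ I do not directly get a single $\mu$. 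To fix this, I will define $\mathcal O_k$ relative to a nested chain: require the $(k+1)$-st orbit to be chosen $\frac1k$-close in $\rho$ to the $k$-th, with exponents within $\frac1k$, using that the wild set contains saddles whose exponents approximate those of any given saddle. A diagonal Baire argument then makes $\mu_{p^{(k)}}$ Cauchy in $\rho$. Its limit $\mu$ is supported in the hyperbolic set, so its exponents are continuous along the sequence; it is ergodic as a limit of shadowing-constructed periodic measures chosen via the closing lemma; and it has zero entropy if the periods grow slowly relative to the approximation.

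\textbf{Step 3.} Take $\mu_n=\nu^{(n)}$. Then $h_{\mu_n}(f)\to\lambda^+_{\max}(f,\mu)/r$ and $h_\mu(f)=0$. Alternatively, and more simply, I may take $\mu=\mu_p$ for a single fixed saddle $p$ of the continuation, with $\mathcal O_k$ defined using the continuation $p_g$; then openness is automatic and the limit issue disappears. I would try this simpler version first. The delicate estimate throughout is the entropy lower bound $\lambda^u/r$ in Step 1, which needs a careful $C^r$ size count of the perturbation.
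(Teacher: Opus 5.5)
Your fallback in Step 3 is the right argument, and it is exactly the paper's proof. Fix one saddle $p$ homoclinically related to the wild set and follow its continuation $p_g$ on a neighbourhood $\mathcal U_p(f)$. Show that the set of $g$ having a basic set whose ergodic measures are all $\frac1n$-close to $\delta_{p_g}$, and which carries an ergodic measure of entropy $>\lambda^+_{\max}(g,\delta_{p_g})/r-\frac1n$, is open and dense in $\mathcal U_p(f)$. Then take $\mu=\delta_p$, so $h_\mu(f)=0$, and get the upper bound from Corollary~\ref{Corollary A}. Since $p_g$ exists only locally, the residual set must be glued as $\bigcup_m\bigcap_n\mathcal S_n(p_{f_m})$ over a countable dense $\{f_m\}\subset\mathcal N$. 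Discard the nested-chain version of Step 2. Nothing you state implies that a limit of the periodic measures $\mu_{p^{(k)}}$ is ergodic with zero entropy: periodic measures are dense among the invariant measures of a basic set. That version is also unnecessary.

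The substantive gaps are in Step 1.

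First, your count of $\sim\delta^{-1}$ branches requires $W^u(p)$ to stay within $\sim\delta^r$ of $W^s(p)$ along an arc whose length does not depend on $\delta$. At a quadratic tangency the deviation is $\sim ax^2$. A wave with $N$ oscillations and amplitude $A$ on $|x|\le L$ then needs $aL^2\lesssim A$ and $A(N/L)^r\lesssim\varepsilon$. Hence $N\lesssim A^{1/2-1/r}$, which stays bounded for $r\ge 2$, and the horseshoe entropy tends to $0$. The missing ingredient, which the paper takes from \cite{DN05}, is a preliminary $C^r$-small perturbation inside $\mathcal N$ that gives $p_g$ an \emph{interval} $[a_1,a_2]\times\{0\}$ of homoclinic tangencies. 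The wave $y=A(N)\cos(\pi N(x-c)/(a_2-a_1))$, with $A(N)=\varepsilon(a_2-a_1)N^{-r}$, is placed along this interval.

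Second, the estimate $h\approx\lambda^u(p)/r$ holds only when $p$ is dissipative. The strips $g^{N_*+k}(D_N)$ have thickness of order $|\lambda_s|^k$, while $D_N$ has height $A(N)\sim|\lambda_u|^{-k}$. Their relative thickness is therefore $|\lambda_s\lambda_u|^{k}$, so for area-expanding $p$ the $N$ branches do not separate. This is consistent with Corollary~\ref{Corollary A}: entropy $\lambda^u/r>\lambda^+_{\max}/r$ accumulating at $\delta_p$ would contradict it. So the rule is not a choice of which exponent to match. Run the construction for $g$ if $p_g$ is dissipative, where $\lambda^u=\min\{\lambda^u,|\lambda^s|\}=\chi(p_g)$ automatically, and for $g^{-1}$ if $p_g$ is area-expanding. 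Your parenthetical about using $f^{-1}$ in the dissipative case has this reversed. You should also perturb so that $p$ is nonresonant, which gives the $C^r$ linearizing coordinates the construction uses.
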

Part (1) of Theorem \ref{Theorem B} establishes the sharpness of the estimate in \eqref{(1.1)} of Theorem \ref{Theorem A}. More precisely, for every \(r>1\) and \(d>1\), there exist \(f\in \mathrm{Diff}^r(\mathbf M^d)\) and \(\mu\in \mathcal M(\mathbf M^d,f)\) for which the bound in \eqref{(1.1)} is attained. Part (2) of Theorem \ref{Theorem B} shows that, in some cases, the critical values in the estimates \eqref{(1.1)} and \eqref{1.2} of Theorem \ref{Theorem A} can be attained simultaneously.
\begin{maintheorem}\label{Theorem B}
(1) For any $d>1$, there exists a compact $d$-dimensional Riemannian manifold $\mathbf M^d$,
for any $1<r<+\infty$, there exists a sequence $\left\{f_{n}\right\}_{n\in\mathbb N^+}$ of $C^r$ diffeomorphisms on $\mathbf M^d$ converging $C^r$ to $f\in\mathrm{Diff}^\infty(\mathbf M^d)$ and a sequence $\left\{\mu_{n}\right\}_{n\in\mathbb N^+}$ of $\left\{f_{n}\right\}_{n\in\mathbb N^+}$-invariant measures converging to an $f$-invariant measure $\mu$, such that
$$\lim_{n\to+\infty}h_{\mu_{n}}(f_{n})= h_{\mu}(f)+\limsup_{n\to+\infty}d^u_{\max}(f_n,\mu_n)\frac{\lambda_1^+(f,\mu)}{r}.$$
Moreover,
$$\lim_{n\to+\infty}h_{\mathrm{top}}(f_{n})= h_{\mathrm{top}}(f)+[\frac{d}{2}]\frac{\lambda^+(f)}{r}.$$
(2) For any $d>1$, there exists a compact $d$-dimensional Riemannian manifold $\mathbf M^d$,
for any $+\infty>r>1$, there exists a sequence $\left\{f_{n}\right\}_{n\in\mathbb N^+}$ of $C^r$ diffeomorphisms on $\mathbf M^d$ converging $C^r$ to $f\in\mathrm{Diff}^\infty(\mathbf M^d)$ and a sequence $\left\{\mu_{n}\right\}_{n\in\mathbb N^+}$ of $\left\{f_{n}\right\}_{n\in\mathbb N^+}$-invariant measures converging to an $f$-invariant measure $\mu$, such that
$$\lim_{n\to+\infty}h_{\mu_n}(f_n)-h_{\mu}(f)=\limsup_{n\to+\infty}d^u_{\max}(f_n,\mu_n)\frac{\lambda^+_{1}(f,\mu)}{r}=\frac{1}{r-1}(\lambda_1^+(f,\mu)-\liminf_{n\to+\infty}\lambda_1^+(f_n,\mu_n)).$$
Moreover,
$$\limsup_{n\to+\infty}d^u_{
\max
}(f_n,\mu_n)=1.$$
\end{maintheorem}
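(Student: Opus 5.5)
The plan is to build everything from a Newhouse--Buzzi type construction: a product of a $C^\infty$ system with a hyperbolic component and $C^r$ perturbations that create small horseshoes ("bumps") near a neutral or tangential region.

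\textbf{Part (1).} Set $k=[\frac d2]$ and take $\mathbf M^d=\mathbb T^{2k}\times\mathbb T^{d-2k}$, where the last factor is either empty or a circle carrying the identity. The first step is a surface model in the spirit of Buzzi's $C^r$ examples. Let $g$ be a $C^\infty$ diffeomorphism of $\mathbb T^2$ with a hyperbolic fixed point $p$ whose eigenvalues are $e^{\pm\lambda}$ and with $\lambda^+(g)=\lambda$, for instance a map that is linear near $p$ and close to the identity elsewhere. Next, $C^r$-small perturbations $g_n$ are chosen so that near a homoclinic-type tangency (equivalently, near $p$ after a return of length $m_n$), a bump of size $\varepsilon_n$ is inserted. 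The $C^r$ norm of the bump is about $\varepsilon_n\cdot(\text{scale})^{-r}$, so we take scale $\approx e^{-\lambda m_n/r}$. The resulting horseshoe then has roughly $e^{\lambda m_n/r}$ branches over a period of about $m_n$. Its measure $\nu_n$ has entropy tending to $\lambda/r$, one unstable direction, and it accumulates on $\delta_p$, for which $h=0$ and $\lambda_1^+=\lambda$.

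Now take products: $f=g\times\cdots\times g$ ($k$ copies, possibly times the identity on $S^1$), $f_n=g_n\times\cdots\times g_n$, and $\mu_n=\nu_n^{\otimes k}$. Entropy is additive, so $h_{\mu_n}(f_n)\to k\lambda/r$. We have $d^u_{\max}(f_n,\mu_n)=k$, $\mu_n\to\delta_{(p,\dots,p)}$, and $\lambda_1^+(f,\mu)=\lambda$. This gives equality in \eqref{(1.1)}. For topological entropy, $h_{\mathrm{top}}(f)$ is computed for the product and $\lambda^+(f)=\lambda$. We will need to arrange that $g$ itself has $h_{\mathrm{top}}(g)$ realized away from the perturbation, and that $h_{\mathrm{top}}(g_n)\to h_{\mathrm{top}}(g)+\lambda/r$. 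Choosing $g$ with $h_{\mathrm{top}}(g)=0$ seems incompatible with a hyperbolic fixed point and a homoclinic point. Instead I would use a time-one map with a hyperbolic saddle and no horseshoe (a north--south type flow on the torus glued so that $h_{\mathrm{top}}=0$), and create the tangency-free "bump horseshoes" purely by the $C^r$ perturbation near $p$ along a returning loop, as in Buzzi's examples. Then $h_{\mathrm{top}}(g_n)\to\lambda/r$, with the upper bound supplied by Corollary \ref{Corollary A}.

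\textbf{Part (2).} Here the loss must also equal $\frac1{r-1}(\lambda-\liminf\lambda_1^+(f_n,\mu_n))$. So the horseshoes must have unstable exponent $\chi$ with $\lambda/r=(\lambda-\chi)/(r-1)$, i.e.\ $\chi=\lambda/r$. In the bump construction, an orbit spends about $m_n$ steps near $p$, expanding by $e^{\lambda m_n}$, and then passes through a bump of scale $e^{-\lambda m_n/r}$. The folding contracts the unstable direction by the ratio of scales, roughly $e^{-\lambda m_n(1-1/r)}$. Hence the net exponent is $\lambda/r$, which matches automatically. To respect the dimension, take $\mathbf M^d=\mathbb T^2\times\mathbb T^{d-2}$ with $f=g\times A$, where $A$ is a $C^\infty$ contraction-like map on the other factor. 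A north--south map works here: it has a sink, so $\mu$ sits at the sink and there are no extra positive exponents. Then $d^u_{\max}=1$.

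\textbf{Main obstacle.} The hard step is the quantitative bump construction. One must verify simultaneously that $\|g_n-g\|_{C^r}\to0$, that the entropy of the horseshoe tends exactly to $\lambda/r$ (needing branch count about $e^{\lambda m_n/r}$ per period $m_n+O(1)$), and that the Lyapunov exponent tends exactly to $\lambda/r$. I would first try a one-dimensional reduction: the dynamics along the unstable direction is $x\mapsto e^{\lambda m}x$ composed with a rescaled $C^r$ sawtooth of amplitude $\varepsilon_n$ and wavelength $e^{-\lambda m/r}$, and both quantities can be computed explicitly for this interval map.
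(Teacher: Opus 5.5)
Your architecture matches the paper's: a surface example, then $[\frac d2]$-fold products with a zero-entropy circle factor in odd dimension, and the surface example times a non-expanding factor for part (2). The difference is how you get the surface example.

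\textbf{What the paper does.} It constructs nothing. It quotes Buzzi's theorem \cite[Theorem 1]{B14}: there is $f\in\mathrm{Diff}^\infty(D)$ with $h_{\mathrm{top}}(f)=0$ and $C^r$-close $f_0$ with $h_{\mathrm{top}}(f_0)=\lambda^+(f)/r$. It takes ergodic $\mu_n$ for $f_n\to f$ with $h_{\mu_n}(f_n)\ge\lambda^+(f)/r-\frac1n$ and a limit point $\mu$. Theorem A then squeezes:
$\frac{\lambda^+(f)}{r}\le\lim_n h_{\mu_n}(f_n)\le h_\mu(f)+\frac{\lambda_1^+(f,\mu)}{r}\le\frac{\lambda^+(f)}{r}$.
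This forces equality in \eqref{(1.1)} and $\lambda_1^+(f,\mu)=\lambda^+(f)$ without identifying $\mu$.

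\textbf{What you do instead, and what each buys.} You rebuild the example with the Downarowicz--Newhouse bump horseshoe. This is the same mechanism the paper sketches for its Newhouse-domain theorem: amplitude $\sim\varepsilon N^{-r}$, return time $k$ with $e^{-\chi k}\sim N^{-r}$, entropy $\log N/(N_*+k)\to\chi/r$. Your route gives an explicit limit $\mu=\delta_p$ and a closed manifold $\mathbb T^d$, whereas the paper's $D^m$ has boundary and corners. The price is that your ``main obstacle'' is the entire content of that construction, and you leave it heuristic. As written it is a plan, not a proof, unless you carry it out or cite \cite{B14} or \cite{DN05}.

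\textbf{You need less than you list.} Only lower bounds must come from the construction: $h_{\mathrm{top}}(g)=0$, $\lambda^+(g)=\lambda$, and ergodic $\nu_n\to\delta_p$ with $h_{\nu_n}(g_n)\ge\frac\lambda r-o(1)$. The matching upper bounds for $h_{\nu_n}(g_n)$ and $h_{\mathrm{top}}(g_n)$ come from Theorem A and Corollary A. In part (2) the exponent is forced, so your fold-contraction computation is unnecessary (though consistent). By \eqref{1.2}, $\frac\lambda r\le\frac{1}{r-1}\bigl(\lambda-\liminf_n\lambda_1^+(f_n,\mu_n)\bigr)$, i.e. $\liminf_n\lambda_1^+(f_n,\mu_n)\le\frac\lambda r$. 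Ruelle's inequality with $d^u_{\max}(f_n,\mu_n)=1$ gives $\lambda_1^+(f_n,\mu_n)\ge h_{\mu_n}(f_n)\to\frac\lambda r$. The paper instead reads this limit off a property quoted from Buzzi's theorem. For the extra factor in part (2), your sink factor works just as well as the paper's irrational rotations with Lebesgue measure.

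\textbf{A correction to your surface model.} A north--south or gradient-like flow cannot supply the returning loop you need. A north--south flow has no saddle, and gradient-like flows have no homoclinic orbits. Use instead the time-one map of a flow with a saddle $p$ one of whose unstable branches coincides with a stable branch, forming a homoclinic loop. Make the remaining rest points expand at rate at most $\lambda$, so that $\lambda^+(g)=\lambda$. Time-one maps of flows on compact surfaces have zero topological entropy (a classical fact), so $h_{\mathrm{top}}(g)=0$ comes for free. This is essentially Buzzi's starting map.
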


\subsection{The outline of the proof}     
\subsubsection{Proof of Theorem \ref{Theorem A}}
\paragraph{\textbf{Bounding entropy along local unstable manifolds.}}
To refine the estimate with respect to dimension, we use the results of \cite{LY85-2} to estimate entropy in a lower-dimensional geometric setting, thereby reducing the geometric dimension of the object to be reparameterized.

\paragraph{\textbf{Yomdin's reparameterization lemma.}}
To refine the estimate with respect to the asymptotic Lipschitz constant, we strengthen Yomdin's reparameterization lemma \cite{Y87}. The key idea is that finer subsets require fewer reparameterizing maps. Moreover, using an algebraic lemma proved by Burguet in \cite{Bu08}, we simplify the proof of the reparameterization lemma.

\paragraph{\textbf{Applying the reparameterization lemma.}}
To apply the reparameterization lemma effectively, one needs to choose admissible times appropriately and stratify the object to be reparameterized accordingly.

\subsubsection{Proof of Theorem \ref{Corollary E}} Motivated by the ideas of \cite{DN05} and \cite{LSW15}, we show that there exists an abundance of diffeomorphisms such that the critical value for the quantitative loss of upper semicontinuity of metric entropy is precisely the one appearing in Corollary \ref{Corollary A}.

\subsubsection{Proof of Theorem \ref{Theorem B}} Motivated by the result of \cite{B14} in the \(2\)-dimensional setting, we generalize it to higher dimensions and show that the critical value for the quantitative loss of upper semicontinuity of metric entropy is precisely the one appearing in Theorem \ref{Theorem A}.

\section{Preliminaries}
\subsection{Metric entropy}
Let $\mu\in\mathcal{M}(X,T)$. Given $\mathcal Q=\left\{A_1,\cdots,A_k\right\}$ a finite measurable partition of $X$, we define the entropy of $\mathcal Q$ by
$$H_{\mu}(\mathcal Q)=-\sum_{1\le i\le k}\mu(A_i)\log{\mu(A_i)}.$$
Metric entropy of $T$ with respect to $\mathcal Q$ is given by
$$h_{\mu}(T,\mathcal Q)=\lim_{n\to+\infty}\frac{1}{n}H_{\mu}(\bigvee_{i=0}^{n-1}T^{-i}\mathcal Q)$$
where
$$\bigvee_{i=0}^{n-1}T^{-i}\mathcal Q:=\left\{A_1\cap\cdots\cap T^{-(n-1)}A_{n}:A_i\in\mathcal Q,\:1\le i\le n\right\}.$$
Metric entropy of $T$ with respect to $\mu$ is given by
$$h_{\mu}(T)=\sup_{\mathcal Q}h_{\mu}(T,\mathcal Q),$$
where $\mathcal Q$ ranges over all finite measurable partitions of $X$.

Let $\mu\in\mathcal{M}(X,T)$. Given $\mathcal Q=\left\{A_1,\cdots,A_k\right\},\:\mathcal P=\left\{B_1,\cdots,B_s\right\}$ two finite measurable partitions of $X$, we define the conditional entropy of $\mathcal P$ with respect to $\mathcal Q$ by
$$H_{\mu}(\mathcal P|\mathcal Q)=\sum_{1\le j\le k}\mu(A_j)H_{\mu|_{A_j}}(\mathcal P),$$
where $\mu|_{A_j}(.):=\frac{\mu(.\cap A_j)}{\mu(A_j)}$.
It is classical to prove that
\begin{equation}
h_{\mu}(T,\mathcal Q)=\lim_{n\to+\infty}H_{\mu}(\mathcal Q|\bigvee_{i=1}^{n}T^{-i}\mathcal Q),
\end{equation}
\begin{equation}
\frac{1}{n}H_{\mu}(\vee_{i=0}^{n-1}T^{-i}\mathcal Q)\downarrow h_{\mu}(T,\mathcal Q),\:\mathrm{as}\:n\to+\infty.
\end{equation}
The following presents several fundamental properties of metric entropy that play a crucial role in estimating the upper bounds of metric entropy.
\begin{lemma}\cite[Lemma 3.2]{B72}\label{Lemma 2.1}
 Let $\left\{\mathcal Q_m\right\}_{m\in\mathbb     N^+}$ be a sequence of finite Borel partitions of $X$ with $\mathrm{diam} \mathcal Q_m\to 0$, as $m\to+\infty$. Then $h_{\mu}(T,\mathcal Q_m)\to h_{\mu}(T)$, as $m\to+\infty$.
 \end{lemma}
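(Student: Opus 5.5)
The plan is the standard argument via refining partitions and conditional entropy. Write $\mathcal P_n=\bigvee_{i=0}^{n-1}T^{-i}\mathcal P$. The goal is to show that for every finite measurable partition $\mathcal P$ and every $\varepsilon>0$ there is $m_0$ such that
$$h_\mu(T,\mathcal Q_m)\ge h_\mu(T,\mathcal P)-\varepsilon \quad\text{for all } m\ge m_0.$$
Taking the supremum over $\mathcal P$ then gives $\liminf_m h_\mu(T,\mathcal Q_m)\ge h_\mu(T)$. Together with the trivial bound $h_\mu(T,\mathcal Q_m)\le h_\mu(T)$, this yields convergence. If $h_\mu(T)=+\infty$, the same inequality shows that $h_\mu(T,\mathcal Q_m)\to+\infty$.

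The first step is the basic comparison inequality
$$h_\mu(T,\mathcal P)\le h_\mu(T,\mathcal Q)+H_\mu(\mathcal P\mid\mathcal Q).$$
It follows from
$$H_\mu(\mathcal P_n)\le H_\mu(\mathcal Q_n)+H_\mu(\mathcal P_n\mid\mathcal Q_n)\le H_\mu(\mathcal Q_n)+\sum_{i=0}^{n-1}H_\mu(T^{-i}\mathcal P\mid T^{-i}\mathcal Q),$$
together with invariance of $\mu$: each summand equals $H_\mu(\mathcal P\mid\mathcal Q)$. Dividing by $n$ and letting $n\to\infty$ gives the inequality. With this in hand, it suffices to show that
$$H_\mu(\mathcal P\mid\mathcal Q_m)\to0 \quad\text{as } m\to\infty,$$
or at least that it becomes smaller than $\varepsilon$.

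The second step, which is the main obstacle, is to prove this smallness. A partition $\mathcal P=\{P_1,\dots,P_k\}$ of a compact metric space need not be nice, so it cannot be approximated directly by small-diameter cells. The approach is:

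1. By regularity of the Borel probability $\mu$, choose compact sets $K_j\subset P_j$ with $\mu(P_j\setminus K_j)<\delta$.
2. Let $\eta=\min_{i\neq j}\mathrm d(K_i,K_j)>0$, and take $m$ so large that $\mathrm{diam}\,\mathcal Q_m<\eta/2$.
3. Then every atom $A\in\mathcal Q_m$ meets at most one of the $K_j$.
4. Form the partition $\mathcal P'=\{K_1,\dots,K_k,\,X\setminus\bigcup_j K_j\}$. Conditioned on any atom $A\in\mathcal Q_m$, $\mathcal P'$ has at most two pieces of positive measure: one $K_j$ and the remainder $R=X\setminus\bigcup_j K_j$, which has $\mu(R)<k\delta$.
5. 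Use $H_\mu(\mathcal P\mid\mathcal Q_m)\le H_\mu(\mathcal P'\mid\mathcal Q_m)+H_\mu(\mathcal P\mid\mathcal P')$.
6. Bound the second term directly. On each $K_j$, $\mathcal P$ is trivial, and on $R$ the conditional entropy is at most $\log k$. So $H_\mu(\mathcal P\mid\mathcal P')\le k\delta\log k$.
7. Bound the first term by concavity. Each conditional distribution on an atom $A$ is carried by two sets, one of which is $R\cap A$. This gives
$$H_\mu(\mathcal P'\mid\mathcal Q_m)\le\sum_A\mu(A)\,\phi\bigl(\mu(R\cap A)/\mu(A)\bigr)\le\phi(k\delta),$$
where $\phi(t)=-t\log t-(1-t)\log(1-t)$ for $t\le1/2$.

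Both terms tend to $0$ as $\delta\to0$, uniformly in $m\ge m_0(\delta)$. The delicate point is exactly this uniform control of atoms that straddle $R$, and the concavity/Jensen step is what handles it.

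The final step is to combine the pieces. Choose $\delta$ such that $\phi(k\delta)+k\delta\log k<\varepsilon$. Then for all $m$ with $\mathrm{diam}\,\mathcal Q_m<\eta/2$ we get $h_\mu(T,\mathcal Q_m)\ge h_\mu(T,\mathcal P)-\varepsilon$, which concludes the proof as described above.
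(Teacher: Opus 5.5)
The paper does not prove this lemma: it is quoted from Bowen \cite{B72} and used as a black box, so there is no internal proof to compare against. Your argument is correct and self-contained. It combines three ingredients: the Rokhlin-type inequality $h_\mu(T,\mathcal P)\le h_\mu(T,\mathcal Q)+H_\mu(\mathcal P\mid\mathcal Q)$, inner regularity of $\mu$ on the compact metric space, and the static bound $H_\mu(\mathcal P\mid\mathcal Q_m)\le \phi(\mu(R))+\mu(R)\log k$. This is the standard route to the lemma.

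Two cosmetic repairs are needed.
\begin{enumerate}
\item For the Jensen step, take $\phi$ concave on all of $[0,1]$. The ratios $t_A=\mu(R\cap A)/\mu(A)$ can exceed $1/2$; for instance $t_A=1$ for atoms $A\subset R$. Only afterwards use that $\phi$ is increasing on $[0,1/2]$, which requires imposing $k\delta\le 1/2$.
\item Discard the empty $K_j$ before defining $\eta$. These occur when $\mu(P_j)=0$, and then $\mathrm d(K_i,K_j)$ is not defined.
\end{enumerate}
In your first step you also reuse $\mathcal Q_n$ for $\bigvee_{i<n}T^{-i}\mathcal Q$, which clashes with the sequence $\mathcal Q_m$; rename it.

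The concavity step is what lets you avoid a power trick. Suppose you only used that each atom of $\mathcal Q_m$ meets at most two elements of $\mathcal P'$. Then you would get $H_\mu(\mathcal P'\mid\mathcal Q_m)\le\log 2$, hence only $h_\mu(T)\le\liminf_m h_\mu(T,\mathcal Q_m)+\log 2$. You would then have to remove the constant by passing to $T^M$ with the partitions $\bigvee_{i=0}^{M-1}T^{-i}\mathcal Q_m$, whose diameters still tend to $0$, and using $h_\mu(T^M)=Mh_\mu(T)$. This is exactly the device the paper uses to remove $\log(d+1)$ in the proof of Theorem A.

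Your Jensen bound avoids this detour. It also gives a stronger, purely non-dynamical statement: $H_\mu(\mathcal P\mid\mathcal Q)$ is small uniformly over all finite partitions $\mathcal Q$ with $\mathrm{diam}\,\mathcal Q<\eta/2$, not just along the given sequence.
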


\begin{lemma}\label{Lemma 2.2}
Assume that \( \left\{T_n\right\}_{n\in\mathbb  N^+}\) is a sequence of homeomorphisms on $X$ converging uniformly to a homeomorphism \( T \), \( \mu \in \mathcal{M}(X,T) \), and \( \mathcal Q \) is a finite partition of \( X \) with \( \mu(\partial \mathcal Q) = 0 \). If \( \mu_n \in \mathcal{M}(X,T_n) \) converges to \( \mu \) as \( n \to +\infty \), then for every \( m \in \mathbb{N}^+ \),

\[
H_{\mu_n} \Bigl( \bigvee_{i=0}^{m-1} T_n^{-i} \mathcal Q \Bigr) \longrightarrow H_{\mu} \Bigl( \bigvee_{i=0}^{m-1} T^{-i} \mathcal Q \Bigr), \:\mathrm{as}\: n \to +\infty.
\]
\end{lemma}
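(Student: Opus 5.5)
The statement to prove is Lemma \ref{Lemma 2.2}: for fixed $m$, the entropy of $\bigvee_{i=0}^{m-1}T_n^{-i}\mathcal Q$ under $\mu_n$ converges to that of $\bigvee_{i=0}^{m-1}T^{-i}\mathcal Q$ under $\mu$. Since $H$ is a finite sum of terms $\phi(\nu(A))$ with $\phi(t)=-t\log t$ continuous on $[0,1]$, it suffices to show convergence of each cell measure. Fix a word $(A_{j_0},\dots,A_{j_{m-1}})$ of elements of $\mathcal Q$. The goal is
\[
\mu_n\Bigl(\bigcap_{i=0}^{m-1}T_n^{-i}A_{j_i}\Bigr)\longrightarrow \mu\Bigl(\bigcap_{i=0}^{m-1}T^{-i}A_{j_i}\Bigr).
\]
The partitions $\bigvee T_n^{-i}\mathcal Q$ have at most $|\mathcal Q|^m$ cells, indexed by the same words. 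Empty cells contribute $0$, and $\phi(0)=0$, so summing over words gives the lemma.

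To prove the cell convergence, write $B=\bigcap_i T^{-i}A_{j_i}$ and $B_n=\bigcap_i T_n^{-i}A_{j_i}$. The boundary of $B$ is contained in $\bigcup_i T^{-i}\partial A_{j_i}$, so by invariance $\mu(\partial B)\le m\,\mu(\partial\mathcal Q)=0$. Fix $\varepsilon>0$. Since $\mu(\partial\mathcal Q)=0$, there exists $\delta>0$ such that the closed $\delta$-neighborhood $U_\delta$ of $\partial\mathcal Q$ satisfies $\mu(U_\delta)<\varepsilon$. This uses the fact that the closed sets $U_\delta$ decrease to the closed set $\partial\mathcal Q$. By uniform convergence, and by uniform continuity of $T$ on compact $X$, for $n$ large we have $\sup_x \mathrm d(T_n^ix,T^ix)<\delta$ for all $0\le i<m$. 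The key step is the following: if $x$ lies in the symmetric difference $B_n\triangle B$, then for some $i<m$ the points $T_n^ix$ and $T^ix$ lie in different elements of $\mathcal Q$. The segment joining two points in different cells need not exist in a metric space, so the argument runs through $T^ix$ directly. If $T^ix$ is at distance $\ge\delta$ from $\partial\mathcal Q$, then the open ball $B(T^ix,\delta)$ meets no boundary point. Hence it lies in the interior of the cell containing $T^ix$: a ball avoiding $\partial A$ and meeting $A$ must be contained in $\mathrm{int}A$, by connectedness-free reasoning, since the ball is split into $\mathrm{int}A$ and $\mathrm{int}(X\setminus A)$ and it contains $T^ix\in A\setminus\partial A$. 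This step needs care. It is cleaner to choose the cells by their closures: if $\mathrm d(T^ix,\partial\mathcal Q)\ge\delta$, then every point within $\delta$ of $T^ix$ lies in the same cell, because otherwise some boundary point would lie within $\delta$. I will justify this by showing that, for a finite partition, a point $y$ with $\mathrm d(y,T^ix)<\delta$ in a different cell forces $\overline{A}\cap\overline{A'}$ or the boundary to come within $\delta$. This is the main technical obstacle. The fallback is to replace the ball argument with Urysohn functions: approximate $\mathbf 1_{A_j}$ from inside and outside by continuous functions $g_j^-\le \mathbf 1_{A_j}\le g_j^+$ with $\int (g_j^+-g_j^-)\,d\mu<\varepsilon$, which is possible since $\mu(\partial A_j)=0$.

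With the Urysohn approach, the proof becomes robust. Set $G_n^\pm=\prod_i g_{j_i}^\pm\circ T_n^i$ and $G^\pm=\prod_i g_{j_i}^\pm\circ T^i$. Then $\int G_n^-\,d\mu_n\le\mu_n(B_n)\le\int G_n^+\,d\mu_n$. Next, $G_n^\pm\to G^\pm$ uniformly, since each $g$ is uniformly continuous and $T_n^i\to T^i$ uniformly; the latter follows by induction from the uniform convergence $T_n\to T$ together with the uniform continuity of $T$. Combined with the weak* convergence $\mu_n\to\mu$, this gives $\int G_n^\pm\,d\mu_n\to\int G^\pm\,d\mu$. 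Finally, $\int (G^+-G^-)\,d\mu\le\sum_i\int (g_{j_i}^+-g_{j_i}^-)\circ T^i\,d\mu<m\varepsilon$ by $T$-invariance of $\mu$, using the telescoping estimate for products of functions with values in $[0,1]$. Since $G^-\le\mathbf 1_B\le G^+$, the $\limsup$ and $\liminf$ of $\mu_n(B_n)$ both lie within $m\varepsilon$ of $\mu(B)$. Letting $\varepsilon\to0$ gives the cell convergence, and hence the lemma.
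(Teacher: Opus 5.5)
Your Urysohn-sandwich argument is correct and is essentially the paper's proof. The paper pushes $\mu_n$ and $\mu$ forward under $F_n(x)=(x,T_nx,\dots,T_n^{m-1}x)$ and $F$ to $X^m$. It obtains $\nu_n\to\nu$ from the same estimate $\|\Phi\circ F_n-\Phi\circ F\|_0\to0$ that you use for $G_n^\pm$, checks $\nu(\partial R)=0$ for $R=A_0\times\cdots\times A_{m-1}$ via $T$-invariance, and then cites the portmanteau theorem. Your bounds $\int G_n^-\,d\mu_n\le\mu_n(B_n)\le\int G_n^+\,d\mu_n$ with product test functions, together with the telescoping estimate in place of $\nu(\partial R)=0$, simply prove that step by hand.

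You should delete the abandoned ball argument rather than leave it in, because its claims are false. It asserts that a ball avoiding $\partial A$ and meeting $\mathrm{int}A$ lies in $\mathrm{int}A$, and that $\mathrm d(T^ix,\partial\mathcal Q)\ge\delta$ forces $B(T^ix,\delta)$ into a single cell; both fail in a disconnected compact space, where balls need not be connected. The local claim does hold if you measure the distance from $T^ix\in A$ to $X\setminus\mathrm{int}A$ instead of to $\partial\mathcal Q$.
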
 
\begin{proof}
It suffices to prove that, for any Borel set $A_0,\cdots,A_{m-1}\subset X$ with $\mu(\partial A_j)=0,\:0\le j\le m-1$, one has
$$\lim_{n\to+\infty}\mu_n(A_0\cap T_{n}^{-1}A_1\cap\cdots\cap T_{n}^{-(m-1)}A_{m-1})=\mu(A_0\cap T^{-1}A_1\cap\cdots\cap T^{-(m-1)}A_{m-1}).$$
For $n\in\mathbb N^+$, define $$F_n:X\to X^{m},\:F_n(x)=(x,T_nx,\cdots,T_n^{m-1}x),$$
$$F:X\to X^{m},\:F(x)=(x,Tx,\cdots,T^{m-1}x).$$
and $$\nu_n(.)=(F_n)_{*}\mu_n(.)=\mu_n\circ F_n^{-1}(.),\nu(.)=F_{*}\mu(.)=\mu\circ F^{-1}(.)\in \mathcal M(X^m).$$ For $R=A_{0}\times A_1\times\cdots\times A_{m-1}\in X^{m}$, one has
$$\nu_{n}(R)=\mu_n(A_0\cap T_{n}^{-1}A_1\cap\cdots\cap T_{n}^{-(m-1)}A_{m-1}),\:\forall\:n\in\mathbb N^+,$$
$$\nu(R)=\mu(A_0\cap T^{-1}A_1\cap\cdots\cap T^{-(m-1)}A_{m-1}).$$
Let $\Gamma_m=\left\{Fx:x\in X\right\}$, then one has
$$\nu(\Gamma_m)=1,$$
$$\Gamma_{m}\cap\partial R\subset \cup_{j=0}^{m-1}\left\{(x,Tx,\cdots,T^{m-1}x):T^jx\in\partial A_{j}\right\}.$$
Therefore,
\begin{equation*}
\begin{aligned}
\nu(\partial R)=\nu(\Gamma_m\cap \partial R)&\le \sum_{j=0}^{m-1}\nu(\left\{(x,fx,\cdots,f^{m-1}x):f^jx\in\partial A_{j}\right\})\\
&= \sum_{j=0}^{m-1}\mu(f^{-j}\partial A_{j})=0.
\end{aligned}
\end{equation*}
It suffices to prove that $\nu_n\to\nu$, as $n\to+\infty$. Choose $\Phi\in C(X^m)$, for $n\in\mathbb N^+$, let $\psi_n(x)=\Phi(F_n(x)),\:\psi(x)=\Phi(F(x))$. Then, one has
\begin{equation*}
\begin{aligned}
\left|\int\Phi d\nu_n-\int \Phi d\nu\right|&=\left|\int\psi_n d\mu_n-\int \psi d\mu\right|\\
&\le \left|\int\psi_n d\mu_n-\int \psi d\mu_n\right|+\left|\int\psi d\mu_n-\int \psi d\mu\right|\\
&\le \sup_{x\in X}\left|\psi_n(x)-\psi(x)\right|+\left|\int\psi d\mu_n-\int \psi d\mu\right|\to 0,\:\mathrm{as}\:n\to+\infty.
\end{aligned}    
\end{equation*}

\end{proof}
\begin{lemma}\cite[Lemma 8]{B23}\label{Lemma 2.3}
Let $\mu\in\mathcal{M}(X)$ and $E$ be a finite subset of $\mathbb N$. For any finite partition $\mathcal Q$ of $X$, for any $m\in\mathbb{N^+}$, we have with $\mu^E(.):=\frac{1}{\#E}\sum_{j\in E}T^j_{*}\mu(.)=\frac{1}{\# E}\sum_{j\in E}\mu\circ T^{-j}(.)$ and $\mathcal Q^E:=\vee_{j\in E}T^{-j}\mathcal Q,\:\mathcal Q^m:=\vee_{j=0}^{m-1}T^{-j}\mathcal Q$, one has
$$\frac{1}{\#E}H_{\mu}(\mathcal Q^E)\le\frac{1}{m}H_{\mu^E}(\mathcal Q^m)+6m\frac{\#(E+1)\triangle E}{\#E}\log\#\mathcal Q.$$
\end{lemma}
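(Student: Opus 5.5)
The plan is to decompose $E$ into maximal runs of consecutive integers and cover each run by disjoint time-windows of length $m$ aligned to a fixed residue class mod $m$. I then average over the $m$ possible residues, in the spirit of the classical Misiurewicz argument for the variational principle. Concavity of $H$ in the measure turns the averaged window entropies into the entropy of the averaged measure $\mu^E$. The only cost is the points of $E$ near the ends of runs that no window covers; these are paid for trivially by $\log\#\mathcal Q$ each.

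\textbf{Step 1 (runs).} Write $E=I_1\sqcup\cdots\sqcup I_p$ with each $I_\ell$ a maximal interval of consecutive integers. Each run has exactly one right endpoint $t$ with $t+1\notin E$, hence contributes one element to $(E+1)\setminus E$. Therefore
$$p\le\#\big((E+1)\triangle E\big).$$

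\textbf{Step 2 (fixed offset).} Fix $k\in\{0,\dots,m-1\}$. Let $G_k$ be the set of $t\in E$ with $t\equiv k \pmod m$ and $t+\{0,\dots,m-1\}\subset E$. The windows $t+[0,m)$, $t\in G_k$, are pairwise disjoint and contained in $E$. Within a single run, the points of $E$ not covered by them lie in an initial segment of length $<m$ and a final segment of length $<m$. Hence the uncovered set $U_k$ satisfies
$$\#U_k\le 2mp.$$
By subadditivity of $H_\mu$ under joins, the invariance $H_\mu(T^{-t}\mathcal Q^m)=H_{T^t_*\mu}(\mathcal Q^m)$, and the bound $H_\mu(\mathcal Q)\le\log\#\mathcal Q$ for each leftover coordinate, we get
$$H_\mu(\mathcal Q^E)\le \sum_{t\in G_k}H_{T^t_*\mu}(\mathcal Q^m)+2mp\log\#\mathcal Q.$$

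\textbf{Step 3 (average over offsets).} Sum the bound of Step 2 over $k=0,\dots,m-1$. Every $t\in E$ lies in at most one $G_k$. Entropies are nonnegative, so each inner sum can be enlarged to a sum over all of $E$:
$$mH_\mu(\mathcal Q^E)\le\sum_{t\in E}H_{T^t_*\mu}(\mathcal Q^m)+2m^2p\log\#\mathcal Q.$$
Divide by $m\#E$. Concavity of $\nu\mapsto H_\nu(\mathcal Q^m)$ gives
$$\frac1{\#E}\sum_{t\in E}H_{T^t_*\mu}(\mathcal Q^m)\le H_{\mu^E}(\mathcal Q^m).$$
Combining this with the bound on $p$ from Step 1 yields
$$\frac{1}{\#E}H_\mu(\mathcal Q^E)\le\frac1m H_{\mu^E}(\mathcal Q^m)+2m\frac{\#\big((E+1)\triangle E\big)}{\#E}\log\#\mathcal Q,$$
which is stronger than the stated constant $6m$.

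The only delicate point is the bookkeeping in Step 2. One must check that the uncovered points really number at most $2m$ per run, including degenerate runs shorter than $m$, where every point is uncovered and the count is $\le\#I_\ell<m$. The rest is routine, so I expect no genuine obstacle; the constant $6$ leaves ample slack for a cruder count.
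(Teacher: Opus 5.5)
Your argument is correct. This paper gives no proof of its own and simply imports the statement from \cite[Lemma 8]{B23}. That result rests on exactly the Misiurewicz-type argument you use: decomposition of $E$ into maximal runs, $m$-windows aligned to a residue class, averaging over the $m$ offsets, and concavity of $\nu\mapsto H_\nu(\mathcal Q^m)$. Your bookkeeping checks out and yields a better constant than $6m$. In fact it yields $m$, since $\#\big((E+1)\triangle E\big)=2p$ exactly and at most $2m-2$ points per run are uncovered.
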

\subsection{Topological entropy}
The following presents the basic definition and fundamental properties of topological entropy.
\begin{definition}
Let $\xi$ be an open cover of $X$, and let $N_n(T,\xi)$ be the minimal cardinality of a subcover of $$\mathcal \xi^n:=\vee_{i=0}^{n-1}T^{-i}\mathcal \xi=\left\{A_1\cap\cdots\cap T^{-(n-1)}A_{n}:A_i\in \xi,1\le i\le n\right\}.$$ 
We define topological entropy of $T$ with respect to $\xi$ by
$$h_{\mathrm{top}}(T,\xi)=\lim_{n\to+\infty}\frac{1}{n}\log N_n(T,\xi),$$
and define topological entropy of $T$ by 
$$h_{\mathrm{top}}(T)=\sup_{\xi}h_{\mathrm{top}}(T,\xi),$$
where $\xi$ ranges over all open covers of $X$.
\end{definition}
The variational principle,
$$h_{\mathrm{top}}(T)=\sup_{\mu\in\mathcal M(X,T)}h_{\mu}(T)=\sup_{\mu\in\mathcal M^{erg}(X,T)}h_{\mu}(T),$$
establishes the relationship between topological entropy and metric entropy.

\subsection{The unstable manifolds}
For $f\in\mathrm{Diff}^r(\mathbf M^d)\:(r>1)$ and $x\in\Gamma_f$, 
if $\lambda_i(f,x)>0$, define
$$W^{i,u}(f,x)=\left\{y\in \mathbf M^d:\limsup_{n\to+\infty}\frac{1}{n}\log{\mathrm d(f^{-n}x,f^{-n}y)}\le -\lambda_i(f,x)\right\}.$$
By \cite{PY76}, $W^{i,u}(f,x)$ is a $C^r$ $\mathrm{dim}(E^1_{f}(x)\oplus\cdots\oplus E^i_{f}(x))$-dimensional immersed sub-manifold of $\mathbf M^d$ tangent at $x$ to $E^1_{f}(x)\oplus\cdots\oplus E^i_{f}(x)$. It is called the $i^{th}$ unstable manifold of $f$ at $x$. We sometimes refer to 
$$\left\{W^{i,u}(f,x):x\in\Gamma_{f}\right\}$$ 
as the $W^{i,u}(f,.)$-foliation on $\mathbf M^d$. For $x\in\Gamma_f$, as a $C^r$ immersed sub-manifold, $W^{i,u}(f,x)$ inherits a Riemannian structure from $\mathbf M^d$, which gives rise to a Riemannian metric on each leaf of $W^{i,u}(f,.)$. We denote the metric by $\mathrm d^{i,u}$. The measure $\nu\in\mathcal M(\mathbf M^d,f)$ with $\lambda_i(f,x)>0,\:\nu$-a.e.\:$x\in\mathbf M^d$ defines conditional measure on the leaves of $W^{i,u}(f,.)$. More precisely, a measurable partition $\zeta$ of $\mathbf M^d$ is said to be subordinate to the $W^{i,u}(f,.)$-foliation if 
$$\mathrm{for}\:\nu\mathrm{-a.e.}\:x\in\mathbf M^d,\:\zeta(x)\subset W^{i,u}(f,x),\:\mathrm{and}\:\zeta(x)\:\mathrm{contains\:a\:neighborhood\:of}\: W^{i,u}(f,x).$$ Associated with each measurable partition subordinate to $W^{i,u}(f,.)$ is a system of conditional measures
$$\nu=\int_{\Gamma_{f}}\nu^{i}_{x}d\nu(x),$$
where $\nu_x^{i}$ is the conditional measure  with respect to $\zeta(x)\subset W^{i,u}(f,x)$.

Let $\varepsilon>0$. For $x\in\Gamma_f$ and $n\in\mathbb{N}^+$, define
$$V^{i,u}_{f}(x,n,\varepsilon)=\left\{y\in W^{i,u}(f,x):\mathrm d^{i,u}(f^kx,f^ky)\le\varepsilon,\:0\le k<n\right\}.$$
Define
$$\underline{h}^{i}_{\nu}(f,x,\varepsilon,\zeta)=\liminf_{n\to+\infty}-\frac{1}{n}\log{\nu_x^{i}(V_{f}^{i,u}(x,n,\varepsilon)}),$$
$$\overline{h}^{i}_{\nu}(f,x,\varepsilon,\zeta)=\limsup_{n\to+\infty}-\frac{1}{n}\log{\nu_x^i(V^{i,u}_{f}(x,n,\varepsilon))}.$$
The following lemma establishes the relationship between the exponential growth rate of Bowen balls on unstable manifolds with respect to conditional measures and metric entropy.
\begin{lemma}\cite[Proposition 7.2.1, Corollary 7.2.2]{LY85-2}\label{Lemma 2.4}
Assume that $\nu\in\mathcal{M}(\mathbf M^d,f)$ with $\nu$-a.e.\:$x\in\mathbf M^d$, $\lambda_1(f,x)>0$. Then for $\nu$-$\mathrm{a.e.\:}x\in\mathbf M^d$ and $1\le i\le u(f,x)$, one has
$$h^{i}_{\nu}(f,x,\zeta)=\lim_{\varepsilon\to 0}\underline{h}^{i}_{\nu}(f,x,\varepsilon,\zeta)=\lim_{\varepsilon\to 0}\overline{h}^{i}_{\nu}(f,x,\varepsilon,\zeta),\:h_{\nu}(f)=\int h^{u(f,x)}_{\nu}(f,x,\zeta)d\nu(x),$$
where 
$$u(f,x)=\max\left\{i\in\mathbb N^+:\lambda_i(f,x)>0\right\},$$ 
for $x\in\mathbf M^d$. Moreover, if we assume that $\nu\in\mathcal{M}^{erg}(\mathbf M^d,f)$ with $\lambda_1(f,\nu)>0$, then there exists $u\in\mathbb N^+$, such that for $\nu$-a.e.\:$x\in\mathbf M^d$, one has
$$u(f,x)=u,\:h_{\nu}(f)=h^u_{\nu}(f,x,\zeta).$$
\end{lemma}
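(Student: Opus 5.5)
This is Ledrappier--Young's result, and we only outline the argument of \cite{LY85-2} as we would reconstruct it. Fix $1\le i\le u(f,x)$. The first step is to construct a measurable partition $\xi_i$ subordinate to the $W^{i,u}(f,\cdot)$-foliation with three properties: it is increasing, $f^{-1}\xi_i\ge\xi_i$; it is generating, $\bigvee_{n\ge0}f^{-n}\xi_i$ is the partition into points; and each atom contains a leafwise neighborhood of $x$ of radius $r(x)$ with $\log r\in L^1(\nu)$. We obtain $\xi_i$ in the Ledrappier--Strelcyn way: intersect Pesin local $i$-th unstable manifolds with a partition whose boundary has $\nu$-small neighborhoods, then refine by forward iterates and apply Mañé's lemma to get the integrability of $\log r$. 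Since conditional measures of different subordinate partitions agree up to normalization on common pieces, the quantities $\underline h^i_\nu,\overline h^i_\nu$ for $\zeta$ coincide with those for $\xi_i$, so we may work with $\xi_i$.

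The second step is a Shannon--McMillan--Breiman statement for $\xi_i$. Applied to the increasing partition, it gives for $\nu$-a.e.\ $x$
\[
-\tfrac1n\log\nu^i_x\bigl((f^{-n}\xi_i)(x)\bigr)\to h^i(x),\qquad \int h^i\,d\nu=H_\nu(f^{-1}\xi_i\,|\,\xi_i).
\]
Here $h^i(x)$ is the $f$-invariant conditional expectation of the information function $I_\nu(f^{-1}\xi_i|\xi_i)$.

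The third step, which we expect to be the main obstacle, is to compare $(f^{-n}\xi_i)(x)$ with the leafwise Bowen balls $V^{i,u}_f(x,n,\varepsilon)$, and this is what yields both limits at once. One direction: since $r(f^kx)$ decays subexponentially along the orbit, for small $\varepsilon$ the Bowen ball essentially lies inside $(f^{-n}\xi_i)(x)$. More precisely, one first replaces $\xi_i$ by a partition with atoms of diameter less than $\varepsilon$, which changes the limit by at most a term that tends to $0$ as $\varepsilon\to0$. This gives $\underline h^i_\nu(f,x,\varepsilon,\xi_i)\ge h^i(x)-o_\varepsilon(1)$. The other direction: atoms of $f^{-n}\xi_i$ containing $x$ cover a leafwise neighborhood of size $e^{-n(\lambda_1+\delta)}$ only up to an exponentially small error. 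We therefore use a Brin--Katok style Borel--Cantelli argument on the leaf: the set of $k$ where $f^kx$ comes within $e^{-\delta k}$ of $\partial\xi_i$ has zero density. This bounds $\overline h^i_\nu(f,x,\varepsilon,\xi_i)\le h^i(x)+o_\varepsilon(1)$. Letting $\varepsilon\to0$ gives the equality of the two limits with $h^i_\nu(f,x,\zeta)$.

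Finally, for $i=u(f,x)$ the full unstable foliation carries all the entropy: by Ledrappier--Strelcyn (the Pesin-formula-free part of \cite{LY85-2}), $H_\nu(f^{-1}\xi_u|\xi_u)=h_\nu(f)$. Here $\xi_u$ is assembled on the invariant sets $\{u(f,\cdot)=u\}$, and on each such set the partition is adapted to the full unstable manifolds. Integrating yields $h_\nu(f)=\int h^{u(f,x)}_\nu\,d\nu$. In the ergodic case, $u(f,\cdot)$ and $h^u(\cdot)$ are $f$-invariant measurable functions, hence $\nu$-a.e.\ constant, so $h_\nu(f)=h^u_\nu(f,x,\zeta)$.
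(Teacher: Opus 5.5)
Your outline is the Ledrappier--Young argument that the paper cites without proof: increasing generating partitions $\xi_i$ subordinate to $W^{i,u}$, the Birkhoff limit of $-\frac1n\log\nu^i_x((f^{-n}\xi_i)(x))$, comparison with leafwise Bowen balls, and $H_\nu(f^{-1}\xi_u|\xi_u)=h_\nu(f)$ from Ledrappier--Young Part I / Ledrappier--Strelcyn. Steps 1, 2 and 4, and the ergodic reduction, are fine.

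\textbf{Step 3 as written does not work.} The two containments are attached to the wrong inequalities.

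For $\overline h^i_\nu(f,x,\varepsilon,\xi_i)\le h^i(x)+o(1)$ you need a \emph{lower} bound on $\nu^i_x(V^{i,u}_f(x,n,\varepsilon))$. That means an atom of some $f^{-n'}\xi_i$, with $n'\le n(1+O(\delta))$, lying \emph{inside} the Bowen ball. The fact you invoke is that $(f^{-n}\xi_i)(x)$ contains a leaf ball of radius about $e^{-n(\lambda_1+\delta)}$. That is a containment in the opposite direction: it bounds the atom's measure from below. Your density estimate for close approaches to $\partial\xi_i$ likewise only controls inner radii of atoms. Routing through balls does not rescue it either. To get $(f^{-n'}\xi_i)(x)$ inside a leaf ball of radius $\varepsilon e^{-n(\lambda_1+\delta)}$ you would need $n'\approx n\lambda_1/\lambda_i$, because inside $W^{i,u}$ atoms shrink only at the slowest rate $\lambda_i$. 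That gives the non-sharp bound $(\lambda_1/\lambda_i)h^i(x)$.

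\textbf{The fix for $\overline h$.} This is the easy direction, and you already had the tool, just in the other paragraph. Suppose the atoms of the increasing partition $\xi_i$ have $\mathrm d^{i,u}$-diameter $<\varepsilon$. For $0\le k<n$, monotonicity gives $f^k\bigl((f^{-n}\xi_i)(x)\bigr)=(f^{-(n-k)}\xi_i)(f^kx)\subset\xi_i(f^kx)$. Hence $(f^{-n}\xi_i)(x)\subset V^{i,u}_f(x,n,\varepsilon)$, which gives $\overline h^i_\nu\le h^i(x)$ with no Borel--Cantelli argument. Alternatively, without refining $\xi_i$, you get $(f^{-(n+m)}\xi_i)(x)\subset V^{i,u}_f(x,n,\varepsilon)$ with $m=O(\delta n)$ by tempered backward contraction along $W^{i,u}$.

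\textbf{The fix for $\underline h$.} For $\underline h^i_\nu\ge h^i(x)-o(1)$, shrinking the atoms is useless; what works is a time shift.
\begin{itemize}
\item From the condition at time $n-1$, $V^{i,u}_f(x,n,\varepsilon)$ lies in $f^{-(n-1)}$ of the $\mathrm d^{i,u}$-ball of radius $\varepsilon$ about $f^{n-1}x$.
\item Tempered backward contraction shrinks $f^{-(m-1)}$ of that ball below $e^{-2\delta n}$ for some $m\le C\delta n$.
\item Since $\log r\in L^1$, $r(f^{n-m}x)\ge e^{-\delta n}$ for large $n$.
\end{itemize}
Hence $V^{i,u}_f(x,n,\varepsilon)\subset(f^{-(n-m)}\xi_i)(x)$, and so $\underline h^i_\nu\ge(1-C\delta)h^i(x)$.

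With step 3 rewired this way, your outline proves the lemma.
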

For $x\in\Gamma_f$, if $i=u(f,x)$, we denote $W^{u}(f,x):=W^{u(f,x),u}(f,x),\:\nu_x:=\nu_{x}^{u(f,x)}$\:(if there exist). Moreover, let $W^u_{loc}(f,x)$ denote the local unstable manifold at $x$ with a sufficiently small size. 
\begin{lemma}\label{Lemma 2.5}
Let $f\in\mathrm{Diff}^r(\mathbf M^d)\:(r>1)$ and $\nu\in\mathcal M^{erg}(\mathbf M^d,f)$ satisfy $\lambda_1(f,\nu)>0$. For any $\varepsilon>0$, there exists a compact subset $E$ of $\mathbf M^d$ with
$\nu(E)>1-\varepsilon$, such that for any $\tau>0$, there exists $\rho>0$, for any $x\in E$, for any measurable set $\Sigma\subset W^u_{loc}(f,x)$ with $\nu_{x}(\Sigma\cap E)>0$, and for any finite partition $\mathcal P$ with $\mathrm{diam}\mathcal P<\rho$, we have
\begin{equation}\label{3.2'}
h_{\nu}(f)\le \liminf_{n\to+\infty}\frac{1}{n}H_{\nu_{x,E,\Sigma}}(\mathcal P^n)+\tau,
\end{equation}
where $\nu_{x,E,\Sigma}(.)=\frac{\nu_x(.\:\cap E\cap \Sigma)}{\nu_x(E\cap \Sigma)}$.  \end{lemma}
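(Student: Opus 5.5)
The plan is to combine the Ledrappier--Young characterization of entropy along unstable leaves (Lemma \ref{Lemma 2.4}) with a uniformization on a large Pesin-type set, and then with a Shannon--McMillan--Breiman type argument applied to the conditional measures.

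\emph{Step 1 (uniform set).} Fix $\varepsilon>0$. By Lemma \ref{Lemma 2.4} and ergodicity there is $u$ with $h_\nu(f)=h^u_\nu(f,x,\zeta)$ for $\nu$-a.e.\ $x$. By Egorov's theorem we choose a compact $E$ with $\nu(E)>1-\varepsilon$ (using Lusin's theorem as well) on which:
\begin{itemize}
\item the local unstable manifolds $W^u_{loc}(f,x)$ have uniform size and vary continuously;
\item the conditional measures $\nu_x$ depend continuously on $x$ in the appropriate sense;
\item the convergence $\underline h^u_\nu(f,x,\varepsilon',\zeta)\to h_\nu(f)$ as $\varepsilon'\to0$ is uniform;
\item the liminf in $n$ is attained at a uniform rate.
\end{itemize}
Concretely, for each $\tau$ there are $\varepsilon'$ and $N$ such that for all $x\in E$ and $n\ge N$,
\[
\nu_x\big(V^u_f(y,n,\varepsilon')\big)\le e^{-n(h_\nu(f)-\tau/2)}\quad\text{for } y\in E\cap W^u_{loc}(f,x).
\]
This requires passing from Bowen balls centred at $x$ to Bowen balls centred at $y$. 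Since $\zeta(x)=\zeta(y)$ for $y$ in the same leaf, we have $\nu_y=\nu_x$, so the estimate at $y$ transfers directly.

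\emph{Step 2 (partition atoms are small).} Take $\rho<\varepsilon'$ so small that each atom of $\mathcal P^n$ intersected with $W^u_{loc}(f,x)$ lies in a leaf Bowen ball $V^u_f(y,n,\varepsilon')$ for some $y$ in the atom. Here we must compare the ambient metric with the leaf metric $\mathrm d^{u,u}$ along the orbit. On $E$ the local unstable disks are uniformly embedded, and $\mathrm d$ and $\mathrm d^{u,u}$ are comparable for points at distance less than $\rho$ within a single local leaf. Forward iterates, however, leave $E$. To handle this, we restrict to times $k$ with $f^kx\in E$, or we use the fact that the forward images of a local unstable disk stay in the unstable leaf and the leaf distance is controlled by the ambient distance for nearby points on the same local piece. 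This comparison is \emph{the main obstacle}. I would first try to handle it by replacing $V^u_f$ with the ambient Bowen ball intersected with the local leaf, invoking Lemma \ref{Lemma 2.4}'s statement in Ledrappier--Young's original form: Proposition 7.2.1 there uses ambient Bowen balls intersected with $\zeta(x)$.

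\emph{Step 3 (entropy count).} For an atom $A\in\mathcal P^n$ meeting $E\cap\Sigma$, Step 2 and Step 1 give
\[
\nu_x(A\cap E\cap\Sigma)\le e^{-n(h_\nu(f)-\tau/2)}.
\]
Hence
\[
H_{\nu_{x,E,\Sigma}}(\mathcal P^n)\ge -\log\max_A \nu_{x,E,\Sigma}(A)\ge n\big(h_\nu(f)-\tau/2\big)+\log\nu_x(E\cap\Sigma).
\]
Dividing by $n$ and letting $n\to\infty$ kills the constant term $\log\nu_x(E\cap\Sigma)$, which is finite since $\nu_x(\Sigma\cap E)>0$. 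This yields \eqref{3.2'}. Since $\rho$ depends only on $\tau$ and $E$, and not on $x$, $\Sigma$ or $\mathcal P$, the statement follows.
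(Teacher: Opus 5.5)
Your overall architecture matches the paper's. It uses Egorov on a compact $E$ to get a local-entropy lower bound for the conditional measures at a scale $\rho$ uniform in $y\in E$, the identity $\nu_y=\nu_x$ for $y\in\zeta(x)$, atoms of $\mathcal P^n$ placed inside Bowen balls of radius $\rho$, and the discarding of $\frac1n\log\nu_x(E\cap\Sigma)$. Your Step 3, with a uniform rate in $n$ and $H\ge-\log\max_A$, is a fine variant of the paper's Fatou argument, which needs only the pointwise $\liminf$.

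\textbf{The gap is Step 2, which you leave unresolved.} As you state it, each atom of $\mathcal P^n$ meeting $W^u_{loc}(f,x)$ should lie in a leaf Bowen ball $V^u_f(y,n,\varepsilon')$, with $\rho$ independent of $n$. This cannot be obtained from ambient closeness. To propagate leaf-closeness from time $k$ to $k+1$ using only $\mathrm d(f^ky,f^kz)<\rho$, you need the local unstable manifolds along the orbit to have size and curvature controlled at scale $\rho$ at \emph{every} time $0\le k<n$. Along a nonuniformly hyperbolic orbit this control is in general only tempered, so for large $n$ it can fail at some times $k<n$. After such a time, $f^k(W^u_{loc}(f,x))$ may fold back, and ambient-close points need not be leaf-close. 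Restricting to times with $f^kx\in E$ does not help. The ball $V^u_f(y,n,\varepsilon')$ imposes its constraint at all $0\le k<n$, and a Bowen ball constrained only at good times is not something Lemma \ref{Lemma 2.4} measures.

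\textbf{Your option (b) is exactly what the paper does.} It works from the outset with ambient Bowen balls $B_n(y,\rho)$. Citing Lemma \ref{Lemma 2.4} and Egorov, it asserts that $\liminf_n-\frac1n\log\nu_y(B_n(y,\rho))\ge h_\nu(f)-\tau$ uniformly for $y\in E$. Then $\mathcal P^n(y)\subset B_n(y,\rho)$ is immediate from $\mathrm{diam}\,\mathcal P<\rho$, Step 2 disappears, and your Steps 1 and 3 go through.

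Be aware, though, that this ambient bound carries all the content, and it is not a formal consequence of Lemma \ref{Lemma 2.4}. That lemma is stated with the leaf metric $\mathrm d^{i,u}$, so your reading of Ledrappier--Young as an ambient statement is not supported by the version quoted. Since the ambient distance is dominated by the leaf distance, one only has $V^u_f(y,n,\rho)\subset B_n(y,\rho)$, i.e.\ $-\log\nu_y(B_n(y,\rho))\le-\log\nu_y(V^u_f(y,n,\rho))$. That is the wrong direction for the lower bound you need.

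The paper passes over this in one line, so neither text supplies the argument. To complete the proof you need a separate justification that $\nu_x$ has local entropy at least $h_\nu(f)$ with respect to ambient Bowen balls. One possible route is a Brin--Katok-type argument for the conditional measures, using a finite partition whose $\rho$-neighbourhood of the boundary has small measure.
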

\begin{proof}
By Lemma \ref{Lemma 2.4} and the Egorov's theorem, for any $\varepsilon>0$,
there exists a compact subset $E$ of $\mathbf M^d$ with $\nu(E)>1-\varepsilon$, for any $\tau>0$, there exists $\rho>0$ such that
\begin{equation}\label{2.4'}
\forall\:x\in E,\:\liminf_{n\to+\infty}-\frac{1}{n}\nu_{x}(B_n(x,\rho))\ge h_{\nu}(f)-\tau,
\end{equation}
where $B_n(x,\rho)=\left\{y\in\mathbf M^d:\mathrm{d}(f^ix,f^iy)\le\rho,\:0\le i<n\right\}$. Therefore, by \eqref{2.4'}, we have
\begin{equation}
\begin{aligned}
\liminf_{n\to+\infty}\frac{1}{n}H_{\nu_{x,E,\Sigma}}(\mathcal P^n)&=\liminf_{n\to+\infty}\int-\frac{1}{n}\log\nu_{x,E,\Sigma}(\mathcal P^n(y))d\nu_{x,E,\Sigma}(y)\\
&\mathbf{\text{Let $\mathcal P^n(y)$ be the element of $\mathcal P^n$ containing $y$}}\\
&\ge\int\liminf_{n\to+\infty}-\frac{1}{n}\log\nu_{x,E,\Sigma}(\mathcal P^n(y))d\nu_{x,E,\Sigma}(y)\\
&\ge \int\liminf_{n\to+\infty}-\frac{1}{n}\log\nu_{x}(\mathcal P^n(y))d\nu_{x,E,\Sigma}(y)\\
&\ge \int\liminf_{n\to+\infty}-\frac{1}{n}\log\nu_{y}(\mathcal P^n(y))d\nu_{x,E,\Sigma}(y)\\
&\nu_{x,E,\Sigma}\mathrm{-a.e.}\:y\in\mathbf M^d,\:\nu_{y}=\nu_{x}\\
&\ge\int\liminf_{n\to+\infty}-\frac{1}{n}\log\nu_{y}(B_{n}(y,\rho))d\nu_{x,E,\Sigma}(y)\\
&\mathrm{diam}\mathcal P<\rho\\
&\ge h_{\nu}(f)-\tau\\
&\nu_{x,E,\Sigma}\text{-a.e.}\:y\in\mathbf M^d,\:\mathrm{one\:has}\:y\in E.
\end{aligned}    
\end{equation}
\end{proof}
\subsection{\texorpdfstring{Review of the $C^r$ size of maps}{Review of the Cr size of maps}}
Assume that $X$ is a compact metric space. For a continuous map $F:X\to\mathbb{R}^{d}$, denote
$$\left\|F\right\|_{0}=\max_{x\in X}\left\|F(x)\right\|.$$

Let $U\subset \mathbb R^m$ be an open concave set. Given $r\in\mathbb N$, we say that a map $F:U\to\mathbb R^d$ is $C^r$ if for any $\omega\in \mathbb N^m$ with $1\le\left|\omega\right|:=\omega_1+\cdots+\omega_{m}\le r$, one has 
$$\partial ^{\omega}F:=\frac{\partial^{\omega_1+\cdots+\omega_{m}}F}{\partial^{\omega_1}x_1\cdots\partial^{\omega_{m}}x_{m}}$$
exists and is continuous on $U$. For any compact subset $K\subset U$, we define the $C^r$ norm
$$\left\|F\right\|_{r,K}:=\max_{1\le \left|\omega\right|\le r}\max_{x\in K}\left\|\partial^\omega_{x} F\right\|.$$
Given $\alpha\in (0,1)$, we say a map $F$ is $C^{\alpha}$ if for any compact set $K\subset U$,
$$\left\|F\right\|_{\alpha,K}:=\sup_{x\ne y\in K}\frac{\left\|F(x)-F(y)\right\|}{\left\|x-y\right\|^{\alpha}}<+\infty.$$
Given $r=[r]+\alpha>1$ which is not an integer, where $0<\alpha<1$, we say $F$ is $C^r$, if it is $C^{[r]}$ and each derivative $\partial^{\omega}F$ is $C^{\alpha}$, for all  $\omega\in \mathbb N^m$ with $\left|\omega\right|=[r]$. For any compact subset $K\subset U$, we define the $C^r$ norm
$$\left\|F\right\|_{r,K}:=\left\|F\right\|_{[r],K}+\max_{\left|\omega\right|=[r]}\left\|\partial^{\omega}F\right\|_{\alpha,K}.$$
If $F$ is a $C^r$ diffeomorphism,
 we define the $C^r$ norm
$$\left\|F\right\|_{r,K}:=\max\left\{\left\|F\right\|_{[r],K}+\max_{\left|\omega\right|=[r]}\left\|\partial^{\omega}F\right\|_{\alpha,K},\left\|F^{-1}\right\|_{[r],K}+\max_{\left|\omega\right|=[r]}\left\|\partial^{\omega}F^{-1}\right\|_{\alpha,K}\right\}.$$
Let $\Omega$ be a compact subset of $\mathbb R^m$ which is equal to the closure of its interior. A map $F:\Omega\to \mathbb R^d$ is $C^r$, if $F$ has a $C^r$ extension to an open neighborhood of $\Omega$. In this case,
$$\left\|F\right\|_{r}=\sup_{K\subset \mathrm{int}(\Omega)}\left\|F\right\|_{r,K}.$$

The definition of $C^r$ maps on subsets of Euclidean space can be naturally extended, via local coordinate charts, to maps between compact Riemannian manifolds. A $C^r$ structure on a smooth manifold $N$ is defined by a maximal atlas $\mathcal B$ with $C^r$ changes of coordinates. A smooth manifold equipped with a $C^r$ structure $\mathcal B$ is called a $C^r$ manifold. A finite subset of $\mathcal B$ that covers $N$ is called a $C^r$ atlas of $N$. Let $N_1$, $N_2$ be two compact $C^r$ manifolds without boundary, and let $\mathcal B_i$ be finite $C^r$ atlases of $N_i$, for $i=1,2$. We say that $F:N_1\to N_2$ is $C^r$, if each map $\tau_2^{-1}\circ F\circ\tau_1$, where $\tau_i$ ranges over $\mathcal B_i$, for $i=1,2$, is $C^r$. The norm of $f$ is:
$$\left\|F\right\|_{r}=\max_{\tau_i\in \mathcal B_i,i=1,2}\left\|\tau_2^{-1}\circ F\circ \tau_1\right\|_{r}<+\infty.$$
The $C^r$ norm is independent of the choice of local coordinate charts, up to the equivalence of the norms.

Similarly, if $F:N_1\to N_2$ is continuous, denote
$$\left\|F\right\|_{0}=\max_{\tau_2\in\mathcal B_2}\left\|\tau_{2}^{-1}\circ F\right\|_{0}.$$

The following presents several fundamental properties of derivatives that will be utilized in this paper.

For positive integers $m,p,q$, let $M_{p,q}(\mathbb R)$ be the set of all real valued $p\times q$ matrices and  denote $AB\in M_{p,m}(\mathbb R)$ the product of two matrices $A\in M_{p,q}(\mathbb R)$, $B\in M_{q,m}(\mathbb R)$. We have with the standard multi-index notations:\\
$\bullet\:$General Leibniz rule: Let $u:\mathbb R^d\to M_{p,q}(\mathbb R)$ and $v:\mathbb R^d\to M_{q,m}(\mathbb R)$, be $C^r$ maps, then for any $\alpha=(\alpha_1,\cdots,\alpha_{d})\in\mathbb N^d$ with $\left|\alpha\right|\le r,$ we have
$$\partial^{\alpha}(uv)=\sum_{\beta\le\alpha}\binom{\alpha}{\beta}(\partial^{\beta}u)(\partial^{\alpha-\beta}v).$$
$\bullet\:$Faa di Bruno's formula: Let $u:\mathbb R^d\to\mathbb R^{c}$ and $v:\mathbb R^{e}\to\mathbb R^d$ be $C^r$ maps, then for any $\alpha\in \mathbb N^e$ with $\left|\alpha\right|\le r$, for any $1\le j\le c$, we have
$$\partial ^{\alpha}(u_{j}\circ v)=\sum_{\left|\beta\right|\le \left|\alpha\right|,\beta\in\mathbb N^d}(\partial^{\beta}u_{j})\circ v\times P_{\beta}((\partial^{\gamma}v_i)_{\gamma,i}),$$
where $P_{\beta}((\partial^{\gamma}v_i)_{\gamma,i})$ is a universal polynomial, in $\partial^{\gamma}v_i$ for $i=1,\cdots,d$ and $\gamma\in\mathbb N^e$ with $\left|\gamma\right|\le \left|\alpha\right|$, of total degree less than or equal to $\left|\alpha\right|$.
\subsection{Taylor's expansion}
Assume that $\phi:U\to\mathbb R^d$ is $C^{r}$, where $U\subset \mathbb R^{m}$ is an open concave set, $r=[r]+\alpha$, and $d,m\in\mathbb N^+$. We consider the following Taylor expansion at $x\in U$ at the level $[r]$,
$$\phi(x+a)=\sum_{k=0}^{[r]}\frac{1}{k!}[D^k_x\phi](a)^k+R_{[r]}(x,a),$$
where $a\in\mathbb R^{m}$ with $x+a\in U$, $(a)^k=(a,\dots,a)\in(\mathbb R^{m})^k$, and
$$R_{[r]}(x,a)=\frac{1}{([r]-1)!}\int_{0}^1(1-t)^{[r]-1}([D^{[r]}_{(x+ta)}\phi-D^{[r]}_{x}\phi](a)^{[r]})dt.$$
With the Hölder condition, one has 
$$\left\|R_{[r]}(x,a)\right\|_{0}\le\frac{1}{[r]!}\left\|D^{[r]}\phi\right\|_{\alpha}\left\|a\right\|^{r}.$$
For $1\le k\le [r]$,
consider the Taylor expansion of $D^{k}\phi$ at $x\in U$ at the level $[r]-k$, one has
$$\left\|D^k R_{[r]}(x,a)\right\|_{0}\le \frac{1}{[r-k]!}\left\|D^{[r]}\phi\right\|_{\alpha}\left\|a\right\|^{r-k}.$$

\subsection{The reparametrization lemma}
\subsubsection{Semi-algebraic sets and the algebraic Lemma}
\begin{definition}
A semi-algebraic set is a subset of $\mathbb R^d$ that can be described by a finite number of polynomial equations and inequalities. More precisely: A set $S\subset \mathbb R^d$ is called a semi-algebraic set if it can be expressed as a finite  Boolean combination (using unions and intersections) of sets of the form
$$\left\{s\in\mathbb R^d:g_1(s)=0\right\},$$
$$\left\{s\in\mathbb R^d:g_2(s)> 0\right\},$$
where
$$g_1,g_2\in \mathbb R[x_1,\cdots,x_d]$$
are polynomials in $n$ variables. Equivalently, $S$ can be written as
$$S=\cup_{1\le i\le s_1}\cap_{1\le j\le s_2}\left\{s\in\mathbb R^d:g_{i,j}(s)*_{i,j}0\right\},$$
where $s_1,s_2\in\mathbb N$, each $g_{i,j}\in\mathbb R[x_1,\cdots,x_d]$, 
 and each ``$*_{i,j}$" is ``$=$" or ``$>$". 
\end{definition}
We present the algebraic lemma in the formulation stated in \cite{OB25}, which refers to \cite{Bu08}.

\begin{lemma}\cite[Lemma 4.2]{OB25}
Let $P:[0,1]^{k}\to\mathbb R^d$ be a polynomial map with total degree less than or equal to $r$ and let $Y$ be a bounded semi-algebraic set of $\mathbb R^d$. Then there is a constant $B_{r,d}$ depending only $r,d,\mathrm{deg}Y,\mathrm{diam}Y$ and semi-algebraic analytic injective maps $\theta_i:(0,1)^{k_i}\to [0,1]^{k}$, $k_{i}\le k$, $i\in I$, such that\\
(1)\:$\#I\le B_{r,d}$,\\
(2)\:$\left\|\theta_{i}\right\|_{r+1}\le\frac{1}{100d}$, $\left\|P\circ\theta_{i}\right\|_{r+1}\le \frac{1}{100d}$,\\
(3)\:$\cup_{i\in I}\mathrm{Im}\theta_i=P^{-1}[Y]$.
\end{lemma}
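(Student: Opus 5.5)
The plan is to derive this from the classical Gromov--Yomdin algebraic lemma in the form proved by Burguet in \cite{Bu08}. The proof rests on two ingredients. The first is a uniform cell decomposition for semi-algebraic sets of bounded complexity. The second is the principle that semi-algebraic functions of bounded complexity can be made to have small $C^{r+1}$ norm by subdividing into a bounded number of pieces and rescaling affinely.

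\textbf{Step 1: reduce to a statement about semi-algebraic sets and functions.} The set $A:=P^{-1}[Y]\cap[0,1]^k$ is semi-algebraic. By the Tarski--Seidenberg theorem, its complexity (number and degrees of the defining polynomials) is bounded in terms of $k,d,r$ and $\deg Y$ only. The same holds for the graph of $P|_A$. By semi-algebraic cell decomposition with uniform bounds, $A$ splits into a number of cells bounded in terms of $(k,d,r,\deg Y)$. Each cell $C$ of dimension $k_i\le k$ is the image of an analytic semi-algebraic injective map $\psi_C:(0,1)^{k_i}\to C$ of bounded complexity, built iteratively as graphs and bands over lower-dimensional cells. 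It therefore suffices to prove the following uniform statement. Given a semi-algebraic map $\Phi=(\psi,P\circ\psi):(0,1)^{m}\to\mathbb R^{k+d}$ of bounded complexity and bounded image (the bound coming from $[0,1]^k$ and $\mathrm{diam}\,Y$), there are boundedly many semi-algebraic analytic injective $\eta_j:(0,1)^{m}\to(0,1)^m$ covering $(0,1)^m$ with $\|\Phi\circ\eta_j\|_{r+1}\le\frac1{100d}$ (and $\|\eta_j\|_{r+1}\le\frac1{100d}$). Setting $\theta=\psi\circ\eta_j$ then gives conclusions (1)--(3).

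\textbf{Step 2: the one-variable case.} For $m=1$ I would argue as follows. A bounded semi-algebraic function $g:(0,1)\to\mathbb R$ of complexity $D$ has derivatives $g^{(s)}$, $s\le r+1$, which are semi-algebraic of complexity bounded in terms of $D$ and $r$. Hence each $g^{(s)}$ has boundedly many zeros and sign changes. After cutting $(0,1)$ at these points, each $g^{(s)}$ is monotone and of constant sign on every piece. On such a piece, boundedness of $g$ together with monotonicity of $g^{(s)}$ gives, by a standard Landau--Kolmogorov type argument, $|g^{(s)}(t)|\lesssim \mathrm{dist}(t,\partial)^{-s}$. A further subdivision of each piece near its endpoints into boundedly many intervals, followed by reparametrization with maps like $t\mapsto t^2$ or affine contractions by a fixed factor, makes all derivatives up to order $r+1$ bounded by a prescribed constant. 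A final affine rescaling by a factor depending only on $(r,d,\mathrm{diam}\,Y)$ reaches $\frac{1}{100d}$. The number of pieces is bounded in terms of $D,r,d$ only. Here the loss at the ends is the delicate point; this is where Gromov's observation that the composed derivatives are controlled on the middle portions is used.

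\textbf{Step 3: induction on $m$.} Write $(0,1)^m=(0,1)^{m-1}\times(0,1)$. First apply Step 2 in the last variable uniformly in the parameter $x'\in(0,1)^{m-1}$. This works because the subdivision points are semi-algebraic functions of $x'$ of bounded complexity, so after a cell decomposition in $x'$ they become finitely many graphs. This yields maps whose derivatives purely in the last variable are small. Next apply the induction hypothesis to the $(m-1)$-variable semi-algebraic family formed by all partial derivatives $\partial_{t}^{j}$ of the reparametrized map together with the boundary functions. Mixed derivatives are then controlled via Faà di Bruno's formula, and a last uniform affine contraction absorbs the combinatorial constants.

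\textbf{Main obstacle.} I expect the hardest step to be the uniformity in Step 3: ensuring that the subdivision in the fiber variable depends semi-algebraically on the base with bounded complexity, so that the base can be treated by induction, and that the mixed partial derivatives after the second reparametrization do not blow up. My first move would be Burguet's device of controlling only the $C^{r+1}$ norm of the fibered map through the sup-norm of a bounded family of semi-algebraic functions. Uniform bounds on the number of cells in that family, through Tarski--Seidenberg, then yield the constant $B_{r,d}$ depending only on $r,d,\deg Y,\mathrm{diam}\,Y$.
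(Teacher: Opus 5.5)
The paper gives no proof of this lemma; it is quoted from \cite{OB25}, which refers to \cite{Bu08}. The short derivation is to apply the Gromov--Yomdin algebraic lemma of \cite{Bu08} to the bounded semi-algebraic set $\Gamma=\{(x,P(x)):x\in[0,1]^k,\ P(x)\in Y\}\subset\mathbb R^{k+d}$. Its complexity is bounded by $k,d,r,\deg Y$ and its diameter by $\sqrt k+\mathrm{diam}\,Y$. The resulting parametrizations of $\Gamma$ have the form $(\theta_i,P\circ\theta_i)$, and each $\theta_i$ is injective because $\Gamma$ is a graph. A final affine subdivision of the domain gives the factor $\frac1{100d}$.

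Your Step 1 is essentially this reduction and is fine. You should, however, allow $\eta_j:(0,1)^{m_j}\to(0,1)^m$ with $m_j\le m$, rather than insisting on full-dimensional pieces. If you simply invoked \cite{Bu08} for your ``uniform statement'' you would be done. The trouble is that Steps 2--3 set out to re-prove the algebraic lemma, and the argument sketched there does not work.

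\textbf{Step 2 fails as written.} You extract $|g^{(s)}(t)|\lesssim\mathrm{dist}(t,\partial)^{-s}$ from $|g|\le C$ and monotonicity. This estimate is invariant under affine rescalings and under $t\mapsto t^N$ (Fa\`a di Bruno returns $O(t^{-s})$ again). So from that estimate alone, no bounded number of reparametrizations of the type you describe can produce a uniform bound.

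Concretely, take $g_\epsilon(t)=\sqrt{t+\epsilon}$. It has fixed complexity, and all its derivatives are monotone of constant sign on $(0,1)$, so no cuts occur. Yet $g_\epsilon'(0)=\tfrac12\epsilon^{-1/2}$, and with $h(t)=t^2$ one gets $(g_\epsilon\circ h)''(0)=\epsilon^{-1/2}$. Both are unbounded as $\epsilon\to0$.

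Two ideas of Gromov are missing:
\begin{enumerate}
\item The $C^1$ step must use the freedom to reparametrize the whole curve $\Phi$, not a fixed function of a fixed variable. On pieces where $|\Phi_i'|$ is the largest coordinate derivative, take $\Phi_i$ itself as the new parameter. Then every coordinate has derivative at most $1$. For $g_\epsilon$ this is the parametrization $t=y^2-\epsilon$.
\item One then inducts on the order $s$. If $\|\phi\|_{s-1}\le 1$, $\phi^{(s)}$ has constant sign and $|\phi^{(s)}|$ is decreasing, then $|\phi^{(s)}(t)|\le 2/t$. This is one power of $t$, coming from the bound on $\phi^{(s-1)}$, not from $|\phi|$. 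After $t\mapsto t^2$, the only dangerous term $\phi^{(s)}(t^2)(2t)^s=O(t^{s-2})$ is bounded for $s\ge 2$. Monotonicity is re-established by semi-algebraic cuts at each order, and the mirrored map $1-(1-t)^2$ handles the increasing case.
\end{enumerate}

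\textbf{Step 3 is also not established.} Your induction hypothesis applies to maps of $x'$ alone. The fibrewise reparametrization, however, depends on $x'$ through the cut functions and the inverse functions. Nothing in the sketch bounds the mixed derivatives $\partial_{x'}^{\beta}\partial_t^{j}$ of the composed map. This is exactly the hard part of the lemma, carried out in \cite{Bu08}, and appealing to Fa\`a di Bruno does not resolve it.
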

\begin{remark}
(1) Maps $\theta_i$ may be $C^{r}$ extended on $[0,1]^{k_i}$ as $\theta_i$ satisfies $\left\|\theta_i\right\|_{r+1}\le 1$.\\
(2) By the invariance of domain theorem the image of each map $\theta_i$ is open and each $\theta_i$ is a homeomorphism onto its image.
\end{remark}
\subsubsection{Yomdin reparametrization lemma}
For $g\in\mathrm{Diff}^r(\mathbf M^d)$ and $x\in\mathbf M^d$, let $k_{g}(x)=\log{\left\|D_xg\right\|}$ and $k_{g}^+(x)=\log^+{\left\|D_xg\right\|}$.
\begin{mainlemma}\label{2.6}
For any $\gamma>0$, there exists \(\varepsilon_{\gamma} > 0\), for any $g\in\mathrm{Diff}^r(\mathbf M^d)\:(r>1)$ with $\left\|g\right\|_{r}<\gamma$, for any \(0 < \varepsilon < \varepsilon_\gamma\), for any ball $B(y,\varepsilon)\subset \mathbf M^d$, and any $C^r$ map \(\sigma:[0,1]^m\to\mathbf M^d\) with $\left\|\sigma\right\|_{r}\le \varepsilon$, there exists a constant \(C_{r,m,d} > 0\) (depending only on \(r,m,d\)), for any $\mathbf k\ge 0$, there exists a family of $C^r$ maps \(\Theta\), such that:\\
1. for any \(\psi \in \Theta\), \(\psi: [0,1]^{k'_{\psi}}\to [0,1]^{m}\) with $1\le k'_{\psi}\le m$;\\
2.\:$\sigma^{-1}\left\{x \in \mathbf M^d : k_g^+(x) = \mathbf k\right\}\cap (g\circ\sigma)^{-1}B(y,\varepsilon) \subset \bigcup_{\psi \in \Theta} \psi([0,1]^{k'_{\psi}});$
   \\  
3.\:for any \(\psi \in \Theta\), $\left\|g\circ\sigma\circ\psi\right\|_{r}\le \varepsilon$;\\  
4.\:$
   \#\Theta \leq C_{r,m,d} e^{\frac{m\mathbf k}{r}};$\\
5. for any $\psi\in\Theta$, $\left\|D\psi\right\|_{0}\le 1$.  
\end{mainlemma}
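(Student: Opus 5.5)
The plan is to run Yomdin's classical argument, with the Taylor-polynomial step replaced by the algebraic lemma \cite[Lemma 4.2]{OB25}. The new feature is the constraint $k_g^+=\mathbf k$: it pins down the size of $Dg$ on the region we keep, and this is what makes the number of pieces depend on $\mathbf k$ rather than on $\log\|Dg\|_0$.

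Set $\ell=\mathbf k$ and work in a chart around $y$; since $\varepsilon_\gamma$ is small, $g$ on $B(y,2\varepsilon)$ and its preimage are read in fixed charts with $C^r$ norms controlled by $\gamma$. First I subdivide $[0,1]^m$ into $\asymp e^{m\ell/r}$ subcubes of side $\delta=c\,e^{-\ell/r}$ and rescale each one by an affine contraction $A$, so that $\sigma_A=\sigma\circ A$ has $\|D\sigma_A\|\le \varepsilon\delta$ and $\|D^s\sigma_A\|\le\varepsilon\delta^s$ for all $s\le r$. I discard the subcubes that do not meet $\sigma^{-1}\{k_g^+=\ell\}\cap(g\circ\sigma)^{-1}B(y,\varepsilon)$. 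On each remaining subcube I pick a point $t_0$ with $\|D_{\sigma(t_0)}g\|\le e^{\ell}$, and I use the fact that $\|Dg\|$ varies by at most a bounded factor on an $\varepsilon$-ball, because $\|D^2g\|$ (or the Hölder constant of $Dg$) is bounded by $\gamma$ and $\varepsilon<\varepsilon_\gamma$.

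By Faà di Bruno, $\|D^s(g\circ\sigma_A)\|\lesssim e^{\ell}\varepsilon\delta^s+C_\gamma\varepsilon\delta^2$ for $s\ge1$ (and the analogous Hölder estimate at the top order $r$). The $\delta^s$ terms are small for $s\ge1$ once $e^\ell\delta^s\le e^\ell\delta\ll1$ … but this fails for $s=1$. So I write $g\circ\sigma_A=P+R$, with $P$ the degree-$[r]$ Taylor polynomial at $t_0$. The remainder satisfies $\|R\|_r\lesssim C_\gamma\varepsilon\delta^{r}e^{\ell}\le\varepsilon/10$ after tuning the constant $c$, since the top-order derivative picks up only the factor $\delta^r=c^re^{-\ell}$. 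The large first-order part lives only in $P$.

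Next I apply \cite[Lemma 4.2]{OB25} to $P$, suitably normalized by $\varepsilon^{-1}$, together with the semialgebraic set $Y$ equal to the ball $B(0,2)$ in normalized coordinates. This produces at most $B_{r,d}$ maps $\theta_i$ with $\|\theta_i\|_{r+1}\le\frac1{100d}$ and $\|P\circ\theta_i\|_{r+1}\le\frac{\varepsilon}{100d}$, covering $P^{-1}(Y)$. That set contains every point whose $g\circ\sigma_A$-image lies in $B(y,\varepsilon)$, since $\|R\|_0$ is small. Then $\|R\circ\theta_i\|_r$ is small by Faà di Bruno, using $\|\theta_i\|_r\le1$, so $\|g\circ\sigma\circ A\circ\theta_i\|_r\le\varepsilon$. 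The maps $\psi=A\circ\theta_i$ satisfy $\|D\psi\|\le\delta\le1$, which gives item 5, and item 1 holds with $k'_\psi=k_i\le m$. Counting gives $\#\Theta\le B_{r,d}\,c^{-m}e^{m\ell/r}$, which is item 4.

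The main obstacle is the control of $R$ in the $C^r$ norm, including its Hölder part, uniformly after composing with $\theta_i$, while still allowing the large first derivative $e^\ell\varepsilon\delta$ of $P$. The point is that only the top-order term must be absorbed by the choice of $\delta$, with lower orders handled by the algebraic lemma. If the bounded-distortion step for $\|Dg\|$ fails when $1<r<2$, I would instead replace $e^{\ell}$ by $e^{\ell+1}$ through a slightly smaller $\varepsilon_\gamma$, using the $\alpha$-Hölder continuity of $Dg$. This changes only the constant $C_{r,m,d}$.
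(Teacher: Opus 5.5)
Your architecture is the paper's: affine subdivision at scale $\delta\approx e^{-\mathbf k/r}$ (the paper's $b^{1/m}$), and bounded distortion of $\|Dg\|$ from a point where $k_g^+=\mathbf k$. Then the degree-$[r]$ Taylor polynomial $P$, whose remainder is controlled by the top H\"older seminorm, and \cite[Lemma 4.2]{OB25} applied to $P$ with $Y$ a ball; its constant does not see the large lower-order coefficients of $P$. Finally composition and counting. (Your aside is backwards: $e^{\ell}\delta^s=c^se^{\ell(1-s/r)}$ can be large for every $1\le s<r$, not only $s=1$. This is harmless, since all those orders go into $P$.)

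There is one genuine gap: your constant depends on $\gamma$. You bound the remainder by $C_\gamma\varepsilon\delta^r e^{\ell}$ and make it $\le\varepsilon/10$ by tuning $c$. Then $c=c(\gamma)$, and your count $B_{r,d}\,c^{-m}e^{m\ell/r}$ has a $\gamma$-dependent constant, whereas the statement requires $C_{r,m,d}$ to depend only on $r,m,d$. This independence is the whole content of the lemma. Since $\|D_xg\|$ is controlled by $\|g\|_r<\gamma$, the set $\{k_g^+=\mathbf k\}$ is empty unless $e^{\mathbf k}\lesssim\gamma$. So with a $\gamma$-dependent constant the lemma is trivial: cubes of side $\approx\gamma^{-1}$ suffice, with no algebraic lemma. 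It is also what the application needs. In Lemma~\ref{Lemma 3.3} the lemma is applied to $g=f^q$ with $\|f^q\|_r<\gamma^q$, and the constant contributes $\frac{\log C}{q}$, which must vanish as $q\to+\infty$. A constant growing polynomially in $\gamma^q$ would leave a positive multiple of $\log\gamma$ in Proposition~\ref{Proposition A}.

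The missing idea is that $\varepsilon_\gamma$ must absorb every term involving higher derivatives of $g$, not only the distortion of $\|Dg\|$. Expand $D^{[r]}(g\circ\sigma_A)$ by Fa\`a di Bruno and take H\"older seminorms. The only genuinely large term is $\|Dg\circ\sigma_A\|_0\,\|D^{[r]}\sigma_A\|_\alpha\le 2e^{\ell}\varepsilon\delta^r$. Every other term involves either $D^jg$ with $j\ge 2$ against at least two derivatives of $\sigma_A$, or a H\"older difference of $D^jg\circ\sigma_A$. Each such term is bounded by $C_{r,m}\,\gamma\,\varepsilon^{1+\beta}\delta^r$ with $\beta=\min\{r-1,1\}>0$. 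Your lower-order bound ``$C_\gamma\varepsilon\delta^2$'' loses exactly this extra power of $\varepsilon$; it should read $\lesssim\gamma\varepsilon^2\delta^s$. Choose $\varepsilon_\gamma$ with $C_{r,m}\gamma\varepsilon_\gamma^{\beta}\le 1$. Then $\|D^{[r]}(g\circ\sigma_A)\|_\alpha\le 3e^{\ell}\varepsilon\delta^r$, so $c$, and hence the count, depends only on $r,m,d$. This is what the paper's choice of $\varepsilon_\gamma$ encodes, through the requirements $\max_{s}\|D^sg^x_{m\varepsilon}\|_0\le 2m\varepsilon\|D_xg\|$ and $\|D^{[r]}g^x_{m\varepsilon}\|_\alpha\le 2m\varepsilon\|D_xg\|$ on the rescaled maps $g^x_{m\varepsilon}=g\circ\exp_x(m\varepsilon\,\cdot)$. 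With this correction your argument goes through.
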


\begin{proof}
\textbf{First step: a simplification of the proof.} In this proof, we denote $a\lesssim b$, if there is a constant $C_{r,m,d}$, such that $a\le C_{r,m,d}b$. We denote $a\approx b$, if $a\lesssim b$ and $b\lesssim a$.  For any \(\mathbf k\ge 0\), it suffices to verify that there exists a family of $C^r$ maps \(\Theta\) (where each \(\psi \in \Theta\), \(\psi: [0,1]^{k'_{\psi}}\to [0,1]^{m}\) with $1\le k'_{\psi}\le m$), satisfying: \\ 
1.\: $
   \sigma^{-1}\left\{x \in \mathbf M^d : k_g^+(x) = \mathbf k\right\}\cap (g\circ\sigma)^{-1}B(y,\varepsilon)  \subset \bigcup_{\psi \in \Theta} \psi([0,1]^{k'_{\psi}});$ \\ 
2.\:for any \(\psi \in \Theta\), $\left\|g\circ\sigma\circ\psi\right\|_{r}\lesssim\varepsilon;$ \\ 
3.\:$
   \#\Theta \lesssim  e^{\frac{m}{r}\mathbf k};$ \\
4.\:for any $\psi\in\Theta$, $\left\|D\psi\right\|_{0}\le 1$.\\
Let $\alpha=r-[r]$. We choose $\varepsilon_{\gamma}>0$ small enough, such that for every $0<\varepsilon<\varepsilon_{\gamma}$, for every $C^r$ diffeomorphism $g$ satisfying
$$\left\|g\right\|_{r}<\gamma,$$
we have
$$\max_{s=1,\cdots,[r]}\left\|D^{s}g^x_{m\varepsilon}\right\|_{0}\le 2m\varepsilon\left\|D_{x}g\right\|,\:\:\mathrm{and}\:\left\|D^{[r]}g^{x}_{m\varepsilon}\right\|_{\alpha}\le 2m\varepsilon\left\|D_xg\right\|,$$ for all $x\in\mathbf M^d$, where $$g^{x}_{m\varepsilon}:\left\{\omega\in T_{x}\mathbf M^d:\left\|\omega\right\|\le1\right\}\to\mathbf M^d,\omega\mapsto g\circ\mathrm{exp}_{x}(m\varepsilon\omega).$$ 
Let $R_{\mathrm{inj}}>0$ be the injectivity radius of $\mathbf M^d$. By scaling the Riemannian metric by a constant factor, the injectivity radius $R_{\mathrm{inj}}$ can be normalized to be greater than 
$m$. Since we do not care about the constant $C_{r,m,d}$, this normalization does not affect the conclusions. 
Choose $z\in\mathbf M^d$, such that there exists $s\in [0,1]^m$,\\
$\bullet\:z=\sigma(s)$,\\
$\bullet\:k_{g}^+(z)=\mathbf k$,\\
$\bullet\:g(z)\in B(y,\varepsilon)$.\\
Without loss of generality,
$$\mathrm{Im}\sigma\subset\mathrm{exp}_{z}\left\{\omega\in T_{z}\mathbf M^d:\left\|\omega\right\|\le m\varepsilon\right\}.$$
We assume that $\varepsilon=\frac{1}{m}$ through the local charts 
$$g\mapsto (m\varepsilon)^{-1}\exp_{g(z)}^{-1}\circ g\circ\exp_{z}(m\varepsilon(.)):\left\{\omega\in T_{z}\mathbf M^d:\left\|\omega\right\|\le 1\right\}\to T_{g(z)}\mathbf M^d,$$ 
$$\sigma\mapsto (m\varepsilon)^{-1}\exp^{-1}_{z}\circ\sigma:[0,1]^m\to \left\{\omega\in T_z\mathbf M^d:\left\|\omega\right\|\le 1\right\},$$
and 
$$B(y,\varepsilon)\mapsto (m\varepsilon)^{-1}\exp_{g(z)}^{-1}B(y,\varepsilon)\underset{\approx}{\subset}  B_{T_{g(z)}\mathbf M^d}(0,\frac{2}{m})$$
with
$$\mathrm{exp}_{g(z)}(B_{T_{g(z)}\mathbf M^d}(0,\frac{2}{m}))\underset{\approx}{\subset} B(y,\frac{4}{m}).$$

\textbf{Second step: Taylor polynomial approximation.} One computes for an affine map 
$$\psi_1:[0,1]^{m}\to [0,1]^{m},(x_1,\cdots,x_m)\mapsto (b^{\frac{1}{m}}x_1+c_1,\cdots,b^{\frac{1}{m}}x_m+c_m),$$ 
with $0<b^{\frac{1}{m}}\le1$ and $0\le c_i\le 1-b^{\frac{1}{m}}\:(1\le i\le m)$ precised later. 
Then, by the Faa di Bruno's formula and the general Leibiniz rule, we have
\begin{equation*}
\begin{aligned}
\left\|D^{[r]}(g\circ\sigma\circ\psi_1)\right\|_{\alpha}&\lesssim b^{\frac{[r]+\alpha}{m}}\left\|D^{[r]}(g^z\circ\sigma^{z})\right\|_{\alpha}\\
&\text{with\:$\sigma^z:=\mathrm{exp}_{z}^{-1}\circ\sigma$ }\\
&\lesssim b^{\frac{r}{m}}\left\|D^{[r]-1}(D_{\sigma^z}g^z\circ D_{}\sigma^{z})\right\|_{\alpha}\\
& \lesssim b^{\frac{r}{m}}\max_{s=0,\cdots,[r]-1}\left\{\left\|D^{s}(D_{\sigma^z}g^z)\right\|_{0},\left\|D^{[r]-1}(D_{\sigma^z}g^z)\right\|_{\alpha}\right\}\\
&\max_{s=1,\cdots,[r]}\left\{\left\|D^{s}\sigma^z\right\|_{0},\left\|D^{[r]}\sigma^z\right\|_{\alpha}\right\}.
\end{aligned}    
\end{equation*}
 Moreover, 
 $$\max_{s=1,\cdots,[r]}\left\{\left\|D^{s}\sigma^z\right\|_{0},\left\|D^{[r]}\sigma^z\right\|_{\alpha}\right\}\lesssim 1,$$ 
 as $\left\|\sigma\right\|_{r}\le 1$. Therefore, by the Faa di Bruno's formula and the choice of $\varepsilon=\frac{1}{m}$, we have
$$\max_{s=0,\cdots,[r]-1}\left\{\left\|D^{s}(D_{\sigma^z}g^z)\right\|_{0},\left\|D^{[r]-1}(D_{\sigma^z}g^z)\right\|_{\alpha}\right\}\lesssim \left\|D_{z}g\right\|.$$
Above all, we have
\begin{equation*}
\begin{aligned}
\left\|D^{[r]}(g\circ\sigma\circ\psi_1)\right\|_{\alpha}&\lesssim b^{\frac{r}{m}}\left\|D_{z}g\right\|\\ 
&\mathrm{with}\:b\approx e^{-\frac{m}{r}\mathbf k}\\
& \le \frac{1}{2d}.
\end{aligned}    
\end{equation*}
It is clear that there exists a family of affine maps $\Theta_1$, such that\\

$\bullet\:
   \sigma^{-1}\left\{x \in \mathbf M^d : k_g^+(x) = \mathbf k\right\}\cap (g\circ\sigma)^{-1}B(y,\frac{4}{m})  \subset \bigcup_{\psi_1\in\Theta_1} \mathrm{Im}\psi_1;\\$

 $\bullet\:\left\|D^{[r]}(g\circ\sigma\circ\psi_1)\right\|_{\alpha}\le \frac{1}{2d},$ for all $\psi_1\in\Theta_1$,\\

 $\bullet\:\#\Theta_1\lesssim e^{\frac{m}{r}\mathbf k}.$\\
  
Therefore, for any $\psi_1\in\Theta_1$, the Taylor polynomial $P$ at $0$ of degree $[r]$ of $g\circ\sigma\circ\psi_1$ satisfies:
$$\left\|P-g\circ\sigma\circ\psi_1\right\|_{r}\le \frac{1}{2d},\:\left\|P-g\circ\sigma\circ\psi_1\right\|_{0}\le \frac{1}{2d}.$$ 

\textbf{Third step: estimation of the cardinality of semi-algebraic sets via the coverage of cubes.} We have
$$ (g\circ\sigma\circ\psi_1)^{-1} B(y,\frac{4}{m})\subset P^{-1}B(y,\frac{8}{m}).$$
we apply now the algebra lemma to $P$ with $Y$ being the ball $B(y,\frac{8}{m})$. Let $\Theta_{P}$ be the family of reparametrization obtained in this way. For $\psi_{2}\in\Theta_{P}$, note that $$\left\|P\circ\psi_{2}\right\|_{r}\le \frac{1}{100d},\:\left\|\psi_{2}\right\|_{r}\le\frac{1}{100d}.$$ Therefore, we obtain, $$\left\|g\circ\sigma\circ\psi_1\circ\psi_2\right\|_{r}\le \left\|(g\circ\sigma\circ\psi_1-P)\circ\psi_2\right\|_{r}+\left\|P\circ\psi_2\right\|_{r} \le 1.$$ 
Then,
$$\#\Theta\le\#\Theta_1\times\#\Theta_{P}\lesssim e^{\frac{m}{r}\mathbf k} .$$
By
$$(g\circ\sigma\circ\psi_1)^{-1}B(y,\frac{4}{m})\subset \cup_{\psi_2\in\Theta_P}\psi_{2}([0,1]^{k_{\psi_2}'}),$$
we obtain
\begin{equation}
\begin{aligned}
\sigma^{-1}\left\{x\in\mathbf M^d:k_{g}^+(x)=\mathbf k\right\}\cap(g\circ\sigma)^{-1}B(y,\frac{4}{m})&\subset \cup_{\psi_1\in\Theta_1}\psi_1([0,1]^m)\cap(g\circ\sigma)^{-1}B(y,\frac{4}{m})\\
&\subset\cup_{\psi_1\in\Theta_1}\psi_1((g\circ\sigma\circ\psi_1)^{-1}B(y,\frac{4}{m}))\\
&\subset \cup_{\psi_1\in\Theta_1}\cup_{\psi_2\in\Theta_P}\psi_1\circ\psi_{2}([0,1]^{k_{\psi_2}'}).
\end{aligned}
\end{equation}

\end{proof}

\subsubsection{Burguet's reparametrization lemma}
Assume that $r=[r]+\alpha>1$.
\begin{definition}{\cite{B23}}
(1) A $C^r$ embedded curve $\sigma:[0,1]\to\mathbf M^d$ is said to be $C^r$ bounded if 
$$\max_{2\le k\le[r]}\left\|D^k \sigma\right\|_{0}\le\frac{1}{6}\left\|D \sigma\right\|_{0},\:\left\|D^{[r]}\sigma\right\|_{\alpha}\le\frac{1}{6}\left\|D \sigma\right\|_{0}.$$
(2) For $\varepsilon>0$, a $C^r$ embedded curve $\sigma$ is said to be strongly $\varepsilon$-bounded if $\sigma$ is $C^r$ bounded and $\left\|D\sigma\right\|_{0}\le \varepsilon$.
\end{definition}
For an embedded curve $\sigma:[0,1]\to\mathbf M^d$ and 
$x\in\mathrm{Im}\sigma$, let $k_{g,\sigma}'(x)=\log\left\|D_xg|_{T_x(\mathrm{Im}\sigma)}\right\|$.
\begin{mainlemma}\label{Lemma B}\cite[Lemma 12]{B23}
For any $\gamma>0$, there exists $\varepsilon_{\gamma}'>0$, for any $g\in\mathrm{Diff}^r(\mathbf M^d)\:(r>1)$ with $\left\|g\right\|_{r}<\gamma$, for any $0<\varepsilon<\varepsilon_{\gamma}'$, and any strongly \(\varepsilon\)-bounded curve $\sigma:[0,1]\to\mathbf M^d$, there exists a constant \(C_{r,d} > 0\) (depending only on \(r,d\)) such that for any \(\mathbf k,\mathbf k'\in \mathbb{R}\), there exists a family of affine maps \(\Theta\) (where each \(\theta\in \Theta\), \(\theta: [0,1]\to [0,1]\)), satisfying:\\ 
1.\:$\sigma^{-1}\left(\left\{x \in \mathbf M^d : k_g(x) = \mathbf k,\:k'_{g,\sigma}(x)=\mathbf k'\right\}\right) \subset \bigcup_{\theta \in \Theta} \theta([0,1]);$\\  
2.\:for any \(\theta\in \Theta\), $g\circ\sigma\circ\theta$ is \(C^r\) bounded;\\  
3.\:$\#\Theta \leq C_{r,d} e^{\frac{\mathbf k-\mathbf k'}{r - 1}}$;\\
4.\:for any $\theta\in\Theta$, $\left\|D\theta\right\|_{0}\le 1$. \end{mainlemma}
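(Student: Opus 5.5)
The plan is a direct subdivision argument. I interpret the level conditions $k_g=\mathbf k$ and $k'_{g,\sigma}=\mathbf k'$ up to a bounded error, as in the integer-level convention used for Lemma \ref{2.6}. I cover $[0,1]$ by consecutive affine maps $\theta(t)=bt+c$ with a common contraction ratio
$$b\approx e^{-\frac{\mathbf k-\mathbf k'}{r-1}}\quad(\text{and } b=1 \text{ if } \mathbf k\le\mathbf k'),$$
and keep only those $\theta$ whose image meets $\sigma^{-1}\{k_g=\mathbf k,\ k'_{g,\sigma}=\mathbf k'\}$. Properties 1 and 4 are then immediate, and $\#\Theta\le 1/b+1\lesssim e^{\frac{\mathbf k-\mathbf k'}{r-1}}$ gives property 3. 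Everything therefore reduces to property 2: for each kept $\theta$, the curve $g\circ\sigma\circ\theta$ must be $C^r$ bounded.

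First I work in exponential charts at scale $\varepsilon$. I choose $\varepsilon'_\gamma$ as in the first step of the proof of Lemma \ref{2.6}, so that the rescaled map $g^x_\varepsilon$ satisfies $\|D^s g^x_\varepsilon\|_0,\ \|D^{[r]}g^x_\varepsilon\|_\alpha\lesssim \varepsilon\|D_xg\|$ for $1\le s\le [r]$. Fix a kept $\theta$ and a point $t_0\in[0,1]$ with $x=\sigma\theta(t_0)$ in the level set. Since $\sigma$ is strongly $\varepsilon$-bounded, $\|D\sigma\|$ varies by at most a factor close to $1$ on $[0,1]$, and all derivatives of $\sigma$ of order $\ge 2$ are at most $\frac16\|D\sigma\|_0$. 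Faà di Bruno and the Leibniz rule then give the top-order estimate
$$\|D^{[r]}(g\circ\sigma\circ\theta)\|_\alpha\lesssim b^{r}\,e^{\mathbf k}\,\|D\sigma\|_0 .$$
At $t_0$ the first derivative satisfies
$$\|D(g\circ\sigma\circ\theta)(t_0)\|=b\,e^{\mathbf k'}\|D\sigma(\theta t_0)\|\approx b\,e^{\mathbf k'}\|D\sigma\|_0 .$$
The ratio of the two is $\lesssim b^{r-1}e^{\mathbf k-\mathbf k'}$. Shrinking $b$ by a constant factor $C_{r,d}^{-1}$ makes this ratio as small as we like, say $\le \delta$.

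The main obstacle is the intermediate derivatives $2\le s\le[r]$. A naive bound of order $s$ yields the ratio $b^{s-1}e^{\mathbf k-\mathbf k'}$, which for $s=2$ would require the much worse choice $b\approx e^{-(\mathbf k-\mathbf k')}$. To avoid this I do not estimate them directly. Instead I use a Landau–Kolmogorov type argument on the one-variable map $\phi=g\circ\sigma\circ\theta:[0,1]\to\mathbb R^d$.

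Let $P$ be the Taylor polynomial of degree $[r]$ of $\phi$ at $t_0$. Then $\|\phi-P\|_{C^{r}}\le\delta\,\|\phi'(t_0)\|$ by the remainder estimates of the Taylor expansion subsection. It remains to control the coefficients of $P'$ in terms of $|\phi'(t_0)|$. For this I compare with the geometric constraint that the derivatives of order $\ge 2$ of $g^x_\varepsilon$ are bounded by $\varepsilon\|D_xg\|$ times powers of $b\|D\sigma\|_0\le b\varepsilon$, which are tiny compared with $\|D_xg\|$. The coefficient of order $s$ is then $\lesssim \varepsilon^{s-1}b^{s}e^{\mathbf k}\|D\sigma\|_0^{s}$. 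Equivalence of norms on polynomials of degree $\le[r]$, combined with the top-order control, interpolates this into the bound $\le\frac16\|\phi'(t_0)\|$ once $b^{r-1}e^{\mathbf k-\mathbf k'}\le\delta$ and $\varepsilon$ is small.

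If this interpolation turns out to be lossy, the fallback is the inductive scheme of \cite{B23}. There one first subdivides only to make $\|D^{[r]}\phi\|_\alpha$ small. Then, for each $s$ from $[r]$ down to $2$, one either finds $\|D^s\phi\|$ already small relative to $\|D\phi\|$, or further subdivides by a bounded factor. The number of pieces stays bounded by a constant depending only on $r,d$.

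Finally, with all higher derivatives at most $\frac16$ of $\|\phi'(t_0)\|$, the first derivative varies on $[0,1]$ by at most a factor $1\pm\frac16$. Hence $\|\phi'(t_0)\|\approx\|D\phi\|_0$, and the $C^r$ bounded inequalities hold with $\|D\phi\|_0$ on the right-hand side, which proves property 2.
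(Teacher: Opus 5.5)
The paper does not prove this lemma; it quotes it from \cite{B23}. Your affine grid, properties 1, 3 and 4, and the top-order estimate $\|D^{[r]}\phi\|_\alpha\lesssim b^{r-1}e^{\mathbf k-\mathbf k'}\|\phi'(t_0)\|$ are all fine. For $1<r\le 2$, where only the top-order constraint matters, this already gives property 2.

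For $r>2$ there is a genuine gap: the step controlling the intermediate derivatives fails. In fact property 2 is false for a uniform grid of mesh $b\approx e^{-(\mathbf k-\mathbf k')/(r-1)}$. Your bound $\lesssim\varepsilon^{s-1}b^se^{\mathbf k}\|D\sigma\|_0^s$ on the order-$s$ coefficient only covers the Fa\`a di Bruno terms containing $D^jg$ with $j\ge 2$. It drops the term $b^s\,D_{\sigma}g\cdot\sigma^{(s)}$, which can be as large as $b^se^{\mathbf k}\|D\sigma\|_0/6$. The reason is that a bounded curve may have curvature comparable to its speed, and $D_xg$ may stretch the curvature direction by $e^{\mathbf k}$ while stretching $T_x\sigma$ only by $e^{\mathbf k'}$. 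Relative to $\|\phi'(t_0)\|$ this term gives exactly the naive ratio $b^{s-1}e^{\mathbf k-\mathbf k'}$ you wanted to avoid. No interpolation can remove it: the value of $\phi'$ at one point plus a bound on the top derivative does not control $\phi''$ (consider $(t-t_0)^2v$).

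Here is a concrete counterexample.
\begin{itemize}
\item Let $g$ coincide near the curve, in a chart, with $\mathrm{diag}(e^{\mathbf k'},e^{\mathbf k})$.
\item Let $\sigma(u)=\varepsilon_0\bigl(u,(u-\tfrac12)^2/12\bigr)$. This curve is strongly $\varepsilon$-bounded, and $u=\tfrac12$ lies in the level set.
\item For every affine $\theta$ of ratio $b$ with $\tfrac12\in\theta([0,1])$, the curve $\phi=g\circ\sigma\circ\theta$ satisfies $\|D^2\phi\|_0=b^2\varepsilon_0e^{\mathbf k}/6$ and $\|D\phi\|_0\le b\varepsilon_0(e^{\mathbf k'}+be^{\mathbf k}/6)$.
\item Hence $C^r$ boundedness forces $b\le\tfrac65e^{-(\mathbf k-\mathbf k')}$.
\end{itemize}
This is far below $e^{-(\mathbf k-\mathbf k')/(r-1)}$ once $r>2$ and $\mathbf k-\mathbf k'$ is large. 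Large $\mathbf k-\mathbf k'$ must be allowed, since $C_{r,d}$ cannot depend on $\gamma$. The same example defeats your fallback: near such points the required refinement of the $b$-grid is unbounded, not a bounded factor.

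The missing idea is to use the level-set constraint at \emph{every} point of $\theta^{-1}\sigma^{-1}\{k_g=\mathbf k,\,k'_{g,\sigma}=\mathbf k'\}$, not only at $t_0$. At those points $\|D\phi(t)\|$ lies between $L/c$ and $cL$, where $L=\|\phi'(t_0)\|$ and $c$ is a universal constant. A repair along these lines:
\begin{itemize}
\item Let $P$ be the Taylor polynomial of degree $[r]-1$ of $D\phi$ at $t_0$. Your top-order estimate gives $\|D\phi-P\|_{C^{[r]-1}}\lesssim\delta L$ on $[0,1]$.
\item Since $\|P\|^2$ is a polynomial of degree at most $2[r]-2$, the set $\{t:\|P(t)\|\le 2cL\}$ is a union of at most $[r]$ intervals $J$, and it contains the level set.
\item On each such $J$, Markov's inequality applied to $P$, plus the $\delta L$ error, gives $|J|^{s-1}\sup_J\|D^s\phi\|\lesssim_r L$ for $2\le s\le [r]$.
\item Cut $J$ into $N(r)$ equal pieces and keep those meeting the level set, where $\|D\phi\|\ge L/c$. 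For $N(r)$ large and $\delta$ small, each rescaled curve is then $C^r$ bounded; the H\"older term is handled by your top-order estimate.
\end{itemize}
This produces $O_r(1)$ affine pieces per $b$-interval. The pieces have non-uniform and possibly much smaller length (about $e^{-(\mathbf k-\mathbf k')}$ in the example). So property 3 survives, and property 2 now holds.
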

\begin{mainlemma}\label{Lemma C}\cite[Lemma 13]{B23}
For any $\varepsilon>0$, for any $C^r$ bounded curve $\sigma$, for any ball $B(y,\varepsilon)\subset \mathbf M^d$, there exists an affine map $\:\theta:[0,1]\to[0,1]$, such that:\\
$\bullet\:\left\|D(\sigma\circ\theta)\right\|_{0}\le 3\varepsilon$;\\
$\bullet\:\sigma^{-1}B(y,\varepsilon)\subset \theta([0,1])$;\\
$\bullet\:\left\|D\theta\right\|_{0}\le 1$.
\end{mainlemma}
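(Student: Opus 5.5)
The plan is to show that a $C^r$ bounded curve is almost a straight segment traversed at almost constant speed. Then the preimage of any $\varepsilon$-ball is contained in a parameter interval of length at most $3\varepsilon/\|D\sigma\|_0$, and $\theta$ will be the affine rescaling onto such an interval. I will work in a local (exponential) chart around $y$ and treat $\sigma$ as a map into $\mathbb R^d$. The distortion from the chart only changes constants, and it can be absorbed by the usual normalization, as in the proof of Lemma \ref{2.6}. If $\|D\sigma\|_0\le 3\varepsilon$ there is nothing to prove: take $\theta=\mathrm{id}$. So assume $\|D\sigma\|_0>3\varepsilon$.

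\textbf{Step 1: almost constant derivative.} In both cases $[r]\ge 2$ and $[r]=1$, the $C^r$ boundedness gives, for all $s,t\in[0,1]$,
\[
\|D\sigma(s)-D\sigma(t)\|\le \tfrac16\|D\sigma\|_0 .
\]
When $[r]\ge2$ this follows from the mean value theorem applied to $D\sigma$, using $\|D^2\sigma\|_0\le\frac16\|D\sigma\|_0$ and $|s-t|\le1$. When $[r]=1$ it follows directly from the Hölder bound $\|D\sigma\|_\alpha\le\frac16\|D\sigma\|_0$. Let $t_0$ be a point where $\|D\sigma(t_0)\|=\|D\sigma\|_0$. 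Comparing with $t_0$ gives $\|D\sigma(s)\|\ge\frac56\|D\sigma\|_0$ for every $s$.

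\textbf{Step 2: uniform expansion along the curve.} Fix $s$ and let $v=D\sigma(s)/\|D\sigma(s)\|$. Writing $\sigma(t)-\sigma(s)=\int_s^t D\sigma(u)\,du$ and projecting onto $v$ gives
\[
\|\sigma(t)-\sigma(s)\|\ge |t-s|\bigl(\|D\sigma(s)\|-\tfrac16\|D\sigma\|_0\bigr)\ge \tfrac23\|D\sigma\|_0\,|t-s| .
\]
Now take $s,t\in\sigma^{-1}B(y,\varepsilon)$. Then $\|\sigma(t)-\sigma(s)\|\le 2\varepsilon$, so $|t-s|\le 3\varepsilon/\|D\sigma\|_0$. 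Hence the convex hull $[a,b]$ of $\sigma^{-1}B(y,\varepsilon)$ has length at most $L:=3\varepsilon/\|D\sigma\|_0<1$.

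\textbf{Step 3: the affine map.} Set $\theta(u)=c+Lu$, where $c$ is chosen so that $[a,b]\subset[c,c+L]\subset[0,1]$. This is possible because $b-a\le L<1$. Then:
\begin{itemize}
\item $\|D\theta\|_0=L\le1$;
\item $\sigma^{-1}B(y,\varepsilon)\subset[a,b]\subset\theta([0,1])$;
\item $\|D(\sigma\circ\theta)\|_0=L\|D\sigma\|_0=3\varepsilon$.
\end{itemize}

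The only delicate point is Step 2, the lower bound $\frac23$ on the expansion. It is here that the constant $\frac16$ in the definition of $C^r$ bounded is used, and the chart distortion has to be controlled so that it does not degrade the final constant $3$. If the chart distortion is a problem, I would first shrink to a sufficiently small scale, where the exponential map is $(1+\delta)$-bi-Lipschitz. The slack between $\frac16$ and the threshold then still yields a factor $\le 3$.
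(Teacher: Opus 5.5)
Your proof is correct and is essentially the standard argument behind the cited \cite[Lemma 13]{B23}; the paper itself only quotes the lemma without proof. The $\frac16$-condition makes $D\sigma$ oscillate by at most $\frac16\|D\sigma\|_0$, so $\sigma$ expands parameter distances by at least $\frac23\|D\sigma\|_0$, and $\sigma^{-1}B(y,\varepsilon)$ fits in a parameter interval of length $3\varepsilon/\|D\sigma\|_0$.

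One correction to your closing remark: your chain of inequalities is exactly tight ($\frac56-\frac16=\frac23$ and $2\varepsilon/\frac23=3\varepsilon$), so it leaves no slack for chart distortion. Slack has to come from integrating the modulus $\frac16\|D\sigma\|_0|u-s|^{\beta}$ (with $\beta=1$ or $\beta=\alpha$), which improves the expansion factor to $\frac{4+5\beta}{6(1+\beta)}>\frac23$. Note also that in the chart $\exp_y$ the ball $B(y,\varepsilon)$ is exactly a Euclidean ball, so no distortion enters on that side.
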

\section{Bounding the entropy for the case of ergodic measures}
In this section, we aim to prove the following two propositions, which give an upper bound estimation of metric entropy in the case of ergodic measures.
\begin{mainproposition}\label{Proposition A}
Assume that $g\in\mathrm{Diff}^r(\mathbf M^d)\:(r>1)$, for any $q\in\mathbb N^+$, there exists a $C^r$ neighborhood $\mathcal V_g$ of $g$ and $\varepsilon_{q}(g)>0$, for any $f\in\mathcal V_g$, for any $\mu\in\mathcal M^{erg}(\mathbf M^d,f)$ with $\mathrm{dim}E_f^u(z)=d_{u}$,
for $\mu$-a.e.\:$z\in\mathbf M^d$, there exists a constant $C_{r,d_u,d}>0$, for any finite partition $\mathcal Q$ of $\mathbf M^d$ with $\mathrm{diam}\mathcal Q<\varepsilon_q(g)$ and $\mu(\partial \mathcal Q)=0$, for any $m\in\mathbb N^+$, one has
\begin{equation}
\begin{aligned}
h_{\mu}(f)\le \frac{1}{m}H_{\mu}(\mathcal Q^m)+\frac{d_u}{rq}(\int \log^+\left\|D_zf^q\right\|d\mu(z)+1)+\frac{\log 3qC_{r,d_u,d}}{q}+\log(d+1).   
\end{aligned}    
\end{equation}

\end{mainproposition}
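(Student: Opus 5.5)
The plan is to bound $h_\mu(f)$ by the entropy of a conditional measure on a single local unstable leaf, which is $d_u$-dimensional, and then to control the growth of that leaf's dynamical refinements with the reparametrization Lemma \ref{2.6} applied to $f^q$, with $m=d_u$.

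\textbf{Reduction to a leaf.} If $\lambda_1(f,\mu)\le 0$ then $h_\mu(f)=0$ by Ruelle's inequality and there is nothing to prove. Otherwise, apply Lemma \ref{Lemma 2.5} to $f^q$ and $\mu$, or to an ergodic component of $\mu$ under $f^q$; this is harmless because $h_\mu(f^q)=qh_\mu(f)$. Fix a good set $E$, a point $x\in E$, and a piece $\Sigma\subset W^u_{loc}(f,x)$. By shrinking $\Sigma$ we may assume it equals $\sigma([0,1]^{d_u})$ for a $C^r$ chart $\sigma$ with $\|\sigma\|_r\le\varepsilon$, where $\varepsilon<\varepsilon_\gamma$ and $\gamma$ bounds $\|f^q\|_r$ uniformly over a $C^r$ neighborhood $\mathcal V_g$. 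This lets us write
\[
qh_\mu(f)\le\liminf_{N\to\infty}\frac1N H_{\nu}\bigl(\mathcal P^{N}_{f^q}\bigr)+\tau,\qquad \nu:=\mu_{x,E,\Sigma},
\]
for partitions $\mathcal P$ of small diameter. Here $\varepsilon_q(g)$ is taken smaller than $\varepsilon_\gamma$.

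\textbf{Covering estimate.} Fix a scale. Partition the values of $k^+_{f^q}$ into intervals of unit length, indexed by $\mathbf k$. For each itinerary $(\mathbf k_0,\dots,\mathbf k_{N-1})$ and each sequence of $\varepsilon$-balls $B(y_j,\varepsilon)$ visited, iterate Lemma \ref{2.6}. This produces a family of maps $\psi$ satisfying $\|f^{qj}\circ\sigma\circ\psi\|_r\le\varepsilon$ for every $j$, and the number of maps is at most
\[
\prod_j C_{r,d_u,d}\,e^{d_u(\mathbf k_j+1)/r}.
\]
Each image $f^{qj}\sigma\psi$ has diameter at most $\varepsilon<\operatorname{diam}$ scale, so it meets only boundedly many elements of $\mathcal Q$. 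With a suitable Lebesgue-number argument, and using $\mu(\partial\mathcal Q)=0$, the number is at most $d+1$ after choosing $\mathcal Q$ with bounded multiplicity; this is where the $\log(d+1)$ comes from. Next, bound $H_\nu(\mathcal Q^{qN})$ given the itinerary partition by
\[
\sum_j\Bigl(\log C+\frac{d_u}{r}(\mathbf k_j+1)\Bigr)+qN\log(d+1),
\]
plus the entropy of the itinerary coding, whose cost per step is $\log 3$ after using the three neighboring $\mathbf k$-values, giving the term $\frac{\log 3qC}{q}$. Averaging $\mathbf k_j$ over $\nu$, dividing by $qN$, and letting $N\to\infty$ gives $\frac{d_u}{rq}\int(\log^+\|Df^q\|+1)\,d\tilde\mu$, where $\tilde\mu$ is the limit of $\frac1N\sum_j f^{qj}_*\nu$.

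\textbf{Passing from $\nu$ to $\mu$.} This is the main obstacle, because $\nu$ is not invariant. I would use Lemma \ref{Lemma 2.3} with $E=\{0,\dots,qN-1\}$, which converts $\frac1{qN}H_\nu(\mathcal Q^{qN})$ into $\frac1mH_{\nu^E}(\mathcal Q^m)$ up to an error of order $m/N$. Then choose $x$ generic, together with the Egorov set $E$ from Lemma \ref{Lemma 2.5}, so that $\nu^E\to\mu$; ergodicity of $\mu$ allows this for a suitable choice of $\Sigma$, possibly after averaging over leaves. Because $\mu(\partial\mathcal Q)=0$, the term $H_{\nu^E}(\mathcal Q^m)$ converges to $H_\mu(\mathcal Q^m)$. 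The function $\log^+\|Df^q\|$ is continuous, so its integrals converge as well. Finally let $\tau\to0$. I expect that making the conditional-measure estimate compatible with the averaging in Lemma \ref{Lemma 2.3}, while keeping the error terms uniform over $f\in\mathcal V_g$, will need the most care.
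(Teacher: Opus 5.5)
There is a genuine gap. The proposal never links the fine partition $\mathcal P$ from Lemma \ref{Lemma 2.5} to the given partition $\mathcal Q$, so the \emph{additive} form of the bound is never obtained. After the leaf reduction, everything is phrased in terms of $H_\nu(\mathcal Q^{qN})$. You bound it by the reparametrization count, iterating Lemma \ref{2.6} ``for each sequence of $\varepsilon$-balls visited''. You then apply Lemma \ref{Lemma 2.3} to the same quantity to reach $\frac1mH_\mu(\mathcal Q^m)$. Two bounds on one quantity give a minimum, not a sum. Moreover, the number of ball sequences grows exponentially and is never paid for; your $\log 3$ per step pays only for the $\mathbf k$-itinerary. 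The missing step is the chain rule
\[
\tfrac1nH_\nu(\mathcal P^n)\le\tfrac1nH_\nu(\mathcal Q^n)+\tfrac1nH_\nu(\mathcal P^n\mid\mathcal Q^n),
\]
with $\operatorname{diam}\mathcal Q<\varepsilon$. The first term goes to $\frac1mH_\mu(\mathcal Q^m)$ via Lemma \ref{Lemma 2.3} and the equidistribution of $\nu$; this is where the ball itinerary is paid for. The second term is bounded atom by atom. All points of an atom $F\in\mathcal Q^n$ stay within $\varepsilon$ of one reference orbit, which supplies the balls required by Lemma \ref{2.6} at no entropy cost and confines each $\mathbf k_j$ to at most three values. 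This is how the paper argues in Lemmas \ref{Lemma 3.1}--\ref{Lemma 3.3}.

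Your source for $\log(d+1)$ is also misplaced. $\mathcal Q$ is arbitrary in the statement, so you cannot choose it with bounded multiplicity. A piece of diameter $\varepsilon$ can meet arbitrarily many atoms of a partition finer than $\varepsilon$, and the condition $\mu(\partial\mathcal Q)=0$ plays no role here. The multiplicity must be imposed on $\mathcal P$, which is yours to choose, since Lemma \ref{Lemma 2.5} allows any partition of diameter $<\rho$. Take $\mathcal P$ with each atom inside a member of an open cover of order $\le d+1$, and let $\delta$ be a corresponding Lebesgue-type constant. Cut each reparametrized piece, whose derivative stays bounded along the orbit segment, into boundedly many subcubes whose iterates stay $\delta$-small. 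Each subcube then meets at most $(d+1)^n$ atoms of $\mathcal P^n$.

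A smaller point: if $\mu$ is not $f^q$-ergodic, your $\tilde\mu$ is an $f^q$-ergodic component, and $\int\log^+\|Df^q\|\,d\tilde\mu$ may exceed $\int\log^+\|Df^q\|\,d\mu$. You must pick the component minimizing this integral, which is the role of the paper's phase $c(x)$. This costs nothing, since every component has $f^q$-entropy $qh_\mu(f)$.
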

\begin{mainproposition}\label{Proposition B}
Assume that $g\in\mathrm{Diff}^r(\mathbf M^d)\:(r>1)$, for any $q\in\mathbb N^+$, there exists a $C^r$ neighborhood $\mathcal V_g$ of $g$ and $\varepsilon_{q}(g)>0$, for any $f\in\mathcal V_g$, for any $\mu\in\mathcal M^{erg}(\mathbf M^d,f)$ with $\mathrm{dim}E_f^u(z)=1$,
for $\mu$-a.e.\:$z\in\mathbf M^d$, there exists a constant $C_{r,d}>0$, for any finite partition $\mathcal Q$ of $\mathbf M^d$ with $\mathrm{diam}\mathcal Q<\varepsilon_q(g)$ and $\mu(\partial \mathcal Q)=0$, for any $m\in\mathbb N^+$, one has
\begin{equation}
\begin{aligned}
h_{\mu}(f)\le \frac{1}{m}H_{\mu}(\mathcal Q^m)+\frac{1}{r-1}(\frac{1}{q}\int \log\left\|D_zf^q\right\|d\mu(z)-\lambda_1(f,\mu)+\frac{1}{q})+\frac{\log 3qC_{r,d}A_{f}^q}{q}+\log(d+1),   
\end{aligned}    
\end{equation}
where
$$A_f^q=q(\log^+\left\|Df\right\|_{0}+\log^+\left\|Df^{-1}\right\|_{0}+1).$$

\end{mainproposition}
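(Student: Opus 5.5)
The statement to prove is Proposition \ref{Proposition B}. The plan is a Burguet-type argument along one-dimensional unstable curves. The ingredients are Lemma \ref{Lemma 2.5} for localizing entropy, Lemmas \ref{Lemma B} and \ref{Lemma C} for reparametrization, and Lemma \ref{Lemma 2.3} to recover $\frac1m H_\mu(\mathcal Q^m)$.

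\textbf{Setup.} Fix $q$ and choose $\mathcal V_g$ so that $\|f^q\|_r<\gamma$ uniformly for $f\in\mathcal V_g$. Let $\varepsilon_q(g)$ be below the thresholds $\varepsilon'_\gamma$ of Lemma \ref{Lemma B}, and below the size at which local unstable curves can be taken strongly $\varepsilon$-bounded.
\begin{itemize}
\item If $\lambda_1(f,\mu)\le 0$, then $h_\mu(f)=0$ by Ruelle's inequality and there is nothing to prove. So assume $\lambda_1(f,\mu)>0$; then $W^u_{loc}(f,x)$ is a $C^r$ curve.
\item Apply Lemma \ref{Lemma 2.5} to get a compact set $E$ with $\mu(E)>1-\varepsilon$, together with $\rho$.
\item Intersecting $E$ with an Egorov/Pesin set, we may assume the following. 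For $x\in E$, a strongly $\varepsilon$-bounded curve $\sigma$ parametrizes a piece $\Sigma\subset W^u_{loc}(f,x)$ with $\mu_x(\Sigma\cap E)>0$. Moreover, along typical points $y\in\Sigma\cap E$ the Birkhoff averages of $\frac1q\log\|D f^q\|$ are close to $\frac1q\int\log\|D_zf^q\|d\mu$, and the averages of $\log\|Df^q|_{T\,\mathrm{Im}}\|$ along the unstable direction are close to $q\lambda_1(f,\mu)$.
\end{itemize}

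\textbf{Iterated reparametrization.} Consider $g_0=f^q$ and times $n=qN$. Quantize the values $k_{f^q}(f^{qj}y)$ and $k'_{f^q,\cdot}(f^{qj}y)$ to integer levels $\mathbf k_j,\mathbf k'_j$. These lie in $[-A_f^q,A_f^q]$, so there are at most $3A_f^q$ choices per step; this accounts for the term $\frac{\log 3qC_{r,d}A^q_f}{q}$.

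Run an induction over $j<N$. Given a $C^r$ bounded curve $\sigma_j$ covering the relevant points and a partition atom of $\mathcal Q$ of diameter $<\varepsilon$:
\begin{itemize}
\item first apply Lemma \ref{Lemma C} to shrink $\sigma_j$ to a strongly $3\varepsilon$-bounded curve;
\item then apply Lemma \ref{Lemma B} to $f^q$ with levels $(\mathbf k_j,\mathbf k'_j)$.
\end{itemize}
This produces at most $C_{r,d}e^{(\mathbf k_j-\mathbf k'_j+1)/(r-1)}$ new curves.

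\textbf{Counting.} For each admissible level sequence, the number of curves of $f^{qN}$ needed to cover the points of $\Sigma\cap E$ lying in a fixed $\mathcal Q^{qN}$-cell is at most
\[
\prod_j C_{r,d}e^{(\mathbf k_j-\mathbf k'_j+1)/(r-1)}.
\]
Summing over level sequences, $\frac1N\sum_j(\mathbf k_j-\mathbf k'_j)$ is close to $\int\log\|D f^q\|d\mu-q\lambda_1(f,\mu)$. Dividing by $q$ gives the main $\frac1{r-1}$ term.

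The intermediate $f$-iterates inside each block of length $q$ are handled by refining $\mathcal Q$ along the orbit. The resulting loss, per unit time, is bounded by a constant times $\log(d+1)$, via a cover of balls by boundedly many partition atoms near boundaries. I would verify this bound carefully, since that is where the $\log(d+1)$ term comes from.

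\textbf{Passing to entropy.} Each final curve has diameter $<\varepsilon$, so it meets at most boundedly many cells of a fine partition $\mathcal P$ with $\operatorname{diam}\mathcal P<\rho$ at every time. This gives the bound
\[
H_{\nu_{x,E,\Sigma}}(\mathcal P^{n}\mid\mathcal Q^{n})\le n\cdot(\text{the reparametrization rate}),
\]
and hence
\[
h_\mu(f)-\tau\le \liminf_n \tfrac1n H_{\nu_{x,E,\Sigma}}(\mathcal Q^{n})+\text{(error terms)}.
\]
Next use Lemma \ref{Lemma 2.3} with $E=\{0,\dots,n-1\}$. It converts $\frac1nH_{\nu_{x,E,\Sigma}}(\mathcal Q^n)$ into $\frac1mH_{\nu^n}(\mathcal Q^m)$ plus an $O(m/n)$ term, where $\nu^n$ is the empirical average of the pushforwards of $\nu_{x,E,\Sigma}$. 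Along a subsequence, $\nu^n$ converges weakly to an invariant measure absolutely continuous in the appropriate sense. By ergodicity and by averaging over $x$ to recover $\mu$, this limit can be taken to be $\mu$. Since $\mu(\partial\mathcal Q)=0$, Lemma \ref{Lemma 2.2} (with constant maps) gives $H_{\nu^n}(\mathcal Q^m)\to H_\mu(\mathcal Q^m)$.

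\textbf{Main obstacle.} The hardest step is the last one: identifying the limit of the empirical measures from a conditional measure on a single leaf with $\mu$. Conditioning on the Birkhoff-typical set $E$ makes the averages along $\Sigma\cap E$ converge to $\mu$, and I would try this first.
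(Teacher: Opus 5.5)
Your architecture is the paper's: Lemma \ref{Lemma 2.5} on a one-dimensional unstable curve, Lemma \ref{Lemma 2.3} to return to $\frac1mH_\mu(\mathcal Q^m)$, and Lemmas \ref{Lemma B} and \ref{Lemma C} over quantized levels $(\mathbf k_j,\mathbf k'_j)$. However, two steps fail as written.

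First, you run $f^q$-blocks starting at time $0$ and assert that $\frac1N\sum_{j<N}(\mathbf k_j-\mathbf k'_j)$ is close to $\int\log\|D_zf^q\|\,d\mu-q\lambda_1(f,\mu)$. The unstable part is fine, because $\dim E^u=1$ gives $\sum_{j<N}\log\|D_{f^{qj}y}f^q|_{E^u}\|=\log\|D_yf^{qN}|_{E^u}\|$. But $\frac1N\sum_{j<N}\log\|D_{f^{qj}y}f^q\|$ is an $f^q$-Birkhoff average, whose limit is the conditional expectation of $\log\|Df^q\|$ on the $f^q$-invariant $\sigma$-algebra. An $f$-ergodic $\mu$ need not be $f^q$-ergodic, so this limit can exceed $\int\log\|D_zf^q\|\,d\mu$ on a set of positive measure, possibly the one carrying your leaf. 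Shrinking $E$ via Egorov does not help, since the problem is the value of the limit, not uniformity. The paper handles this with the residue-class averages $\phi_c$. Their mean over $c$ is the space average, so each $x\in K$ has a $c(x)$ along which the average of $\log\|Df^q\|-\log\|Df^q|_{E^u}\|$ is at most $\int\log\|D_zf^q\|\,d\mu-q\lambda_1(f,\mu)$. One splits $F_n\cap K\cap\mathrm{Im}\sigma$ by $c(x)$ and uses admissible times $\{0\}\cup(c+q\mathbb N)\cup\{n\}$; this costs only a factor $q$ (Lemma \ref{Lemma 3.5}).

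Second, your passage from the reparametrization count to $H_{\nu_{x,E,\Sigma}}(\mathcal P^n|\mathcal Q^n)$ is wrong as stated. A curve meeting at most $L$ atoms of $\mathcal P$ at each time can meet up to $L^n$ atoms of $\mathcal P^n$. Here $L$ is unbounded as $\rho\to0$ with $\varepsilon$ fixed, and $\rho\to0$ is forced by $\tau\to0$ in Lemma \ref{Lemma 2.5}. What works is the argument of Lemma \ref{Lemma 3.2}:
(i) Take $\varepsilon$ small relative to $\gamma^{-q}$, where $\|f\|_r<\gamma$; the paper puts $\gamma^{-i}$, $i\le q$, into $\varepsilon_q(g)$. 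Since admissible times have gaps at most $q$, every reparametrized curve then satisfies $\|D(f^j\circ\sigma\circ\theta)\|_0\le1$ for \emph{all} $0\le j<n$. This is how intermediate iterates are handled, not by refining $\mathcal Q$.
(ii) Cutting the parameter interval into $O(1/\delta)$ pieces, a number independent of $n$, places each piece in an $(n,\delta)$-Bowen ball.
(iii) Take $\mathcal P$ inside an open cover of order at most $d+1$, using $\dim_{\mathrm{Leb}}\mathbf M^d=d$. Then \cite[Lemma 3.3]{B72} shows each such Bowen ball meets at most $(d+1)^n$ atoms of $\mathcal P^n$. That is exactly where the $\log(d+1)$ term comes from.

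By contrast, the step you flag as the main obstacle is routine. If $E$ lies in the set where $\frac1n\sum_{j<n}\delta_{f^jy}\to\mu$, dominated convergence gives $\frac1n\sum_{j<n}f^j_*\nu_{x,E,\Sigma}\to\mu$ directly. No subsequences or averaging over leaves are needed (Lemma \ref{Lemma 3.1}).
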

\subsection{\texorpdfstring{The choices of $\varepsilon_{q}(g)$ and $\mathcal V_g$}{The choices of}}
Choose $\gamma>\max\left\{1,\left\|g\right\|_{r}\right\}$. There is a $C^r$ neighborhood $\mathcal V_{g}^1$ of $g$, such that for any $f\in\mathcal V_{g}^1$, $\left\|f\right\|_{r}< \gamma$. Choose $\varepsilon_q>0$ small enough and a $C^r$ neighborhood $\mathcal V_g^2$, so that for any $f\in\mathcal V_g^2$, for any $x,y\in \mathbf M^d$ with $\mathrm{d}(x,y)\le \varepsilon_q$, for any $1\le i\le q$, 
$$\left|\log\left\|D_xf^i\right\|-\log\left\|D_yf^i\right\|\right|\le 1.$$
Assume that $\mathcal V_{g}=\mathcal V_{g}^1\cap\mathcal V_{g}^2$ and $\varepsilon_q(g)=\min\left\{\varepsilon_{\gamma^i},\varepsilon_{\gamma^i}',\varepsilon_q,\gamma^{-i}:1\le i\le q\right\}$.

\subsection{Bounding the entropy along local unstable manifolds}
Assume that $f\in\mathcal V_g$ and 
$$\mu\in\mathcal M^{erg}(\mathbf M^d,f)\:\:\mathrm{with}\:\: \mathrm{dim}E^u_f(x)=d_u,$$ 
for $\mu$-a.e.\:$x\in\mathbf M^d$. Let $K$ be a compact subset of $\mathbf M^d$ with the following properties:\\

$\bullet\:\mu(K)>\frac{1}{2}$ and for any $\tau>0$, there exists $\rho>0$, such that for any $x\in K$, for any measurable set $\Sigma\subset W^u_{loc}(f,x)$ with $\mu_{x}(\Sigma\cap K)>0$, and for any finite partition $\mathcal P$ with $\mathrm{diam}\mathcal P<\rho$, one has
\begin{equation}\label{3.2}
h_{\mu}(f)\le \liminf_{n\to+\infty}\frac{1}{n}H_{\mu_{x,K,\Sigma}}(\mathcal P^n)+\tau,
\end{equation}
where $\mu_{x,K,\Sigma}(.)=\frac{\mu_x(.\:\cap K\cap \Sigma)}{\mu_x(K\cap \Sigma)}$,\\

$\bullet\:$ the following convergence holds uniformly for $x\in K$
\begin{equation}\label{3.3}
\frac{1}{n}\sum_{j=0}^{n-1}\delta_{f^jx}\to\mu,\:\frac{1}{n}\log\left\|D_xf^n|_{E^1_f(x)}\right\|\to\lambda_1(f,\mu),\:\mathrm{as}\:n\to+\infty,\\
\end{equation}

$\bullet\:$ for every $c\in \left\{0,1,\cdots q-1\right\}$, the following convergence holds uniformly for $x\in K$
\begin{equation}\label{3.41}
 \lim_{m\to+\infty}\frac{1}{m}\sum_{j=0}^{m-1}\log{\left\|D_{f^{qj+c}x}f^q\right\|}=\phi_c(x),   
\end{equation}
\begin{equation}\label{3.4}
 \lim_{m\to+\infty}\frac{1}{m}\sum_{j=0}^{m-1}\log^+{\left\|D_{f^{qj+c}x}f^q\right\|}=\phi_c^+(x),   
\end{equation}
where $\phi_c^+,\:\phi_c:\mathbf M^d\to\mathbb R$ are $f^q$-invariant measurable functions with 
$$\frac{1}{q}\sum_{c=0}^{q-1}\phi_c(x)=\int\log{\left\|D_zf^q\right\|d\mu(z)},$$
$$\frac{1}{q}\sum_{c=0}^{q-1}\phi_c^+(x)=\int\log^+{\left\|D_zf^q\right\|d\mu(z)}.$$
The existence of $K$ comes from the Egorov's theorem, Lemma \ref{Lemma 2.5}, and the Birkhoff ergodic theorem.   

\begin{lemma}\label{Lemma 3.1}
Assume that $x_0\in K$ and a $C^r$ embedded map $\sigma:[0,1]^{d_u}\to \mathbf M^d$ satisfying\\ $\bullet\:\mathrm{Im}\sigma \subset W^u_{loc}(f,x_0)$,\\ 
$\bullet\:\mu_{x_0}(\mathrm{Im}\sigma\cap K)>0$.\\ 
For any $\tau>0$, there exists $\rho>0$, for any finite partitions $\mathcal P$ with $\mathrm{diam}(\mathcal P)<\rho$ and $\mu(\partial\mathcal P)=0$, for any $m\in\mathbb N^+$, one has
$$h_{\mu}(f)\le \frac{1}{m}H_{\mu}(\mathcal Q^m)+\limsup_{n\to+\infty}\frac{1}{n}H_{\mu_{x_0,K,\mathrm{Im}\sigma}}(\mathcal P^n|\mathcal Q^n)+\tau.$$
\end{lemma}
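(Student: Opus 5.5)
We start from the bound \eqref{3.2} in the defining properties of $K$ and split the $n$-step entropy of $\mathcal P^n$ along the local unstable leaf into a conditional part and a part controlled by $\mathcal Q$. Fix $\tau>0$ and take $\rho>0$ as in \eqref{3.2}, with $\tau$ replaced by $\tau/2$. Let $\mathcal P$ satisfy $\mathrm{diam}\mathcal P<\rho$ and $\mu(\partial\mathcal P)=0$, and write $\nu:=\mu_{x_0,K,\mathrm{Im}\sigma}$, which is well defined because $\mu_{x_0}(\mathrm{Im}\sigma\cap K)>0$. Then \eqref{3.2} gives $h_\mu(f)\le\liminf_n\frac1nH_\nu(\mathcal P^n)+\tau/2$. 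The chain rule gives $H_\nu(\mathcal P^n)\le H_\nu(\mathcal P^n\vee\mathcal Q^n)=H_\nu(\mathcal Q^n)+H_\nu(\mathcal P^n|\mathcal Q^n)$. So it suffices to show
$$\limsup_{n\to+\infty}\frac1nH_\nu(\mathcal Q^n)\le\frac1mH_\mu(\mathcal Q^m)+\frac{\tau}{2}\quad\text{for every } m\in\mathbb N^+ .$$

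To estimate $\frac1nH_\nu(\mathcal Q^n)$ we use Lemma \ref{Lemma 2.3} with $T=f$ and $E=\{0,\dots,n-1\}$. Then $\#((E+1)\triangle E)=2$, and we obtain
$$\frac1nH_\nu(\mathcal Q^n)\le\frac1mH_{\nu^{E}}(\mathcal Q^m)+\frac{12m}{n}\log\#\mathcal Q,\qquad \nu^E=\frac1n\sum_{j=0}^{n-1}f^j_*\nu .$$
The error term tends to $0$ as $n\to+\infty$ for fixed $m$. We next show that $\nu^E\to\mu$ in the weak* topology. By \eqref{3.3}, the empirical measures $\frac1n\sum_{j<n}\delta_{f^jx}$ converge to $\mu$ uniformly in $x\in K$. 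Since $\nu$ is supported on $K$, for each continuous $g$ we have $\int g\,d\nu^E=\int\bigl(\frac1n\sum_{j<n}g(f^jx)\bigr)d\nu(x)\to\int g\,d\mu$, which gives $\nu^E\to\mu$. It is important here that $\nu$ is concentrated on $K$; this is why the conditional measure is restricted to $K\cap\mathrm{Im}\sigma$.

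We then pass to the limit in $H_{\nu^E}(\mathcal Q^m)$. The measures $\nu^E$ are not invariant, but Lemma \ref{Lemma 2.2} is not needed in full: with $T_n=T=f$, its proof only uses that the measures converge to $\mu$ and that $\mu(\partial\mathcal Q)=0$. Hence $\nu^E(A)\to\mu(A)$ for every $A\in\mathcal Q^m$, because $\mu(\partial(f^{-i}Q))=\mu(f^{-i}\partial Q)=0$ by invariance of $\mu$. Since $H$ is a continuous function of finitely many atom masses, this gives $H_{\nu^E}(\mathcal Q^m)\to H_\mu(\mathcal Q^m)$.

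Combining these facts, $\limsup_n\frac1nH_\nu(\mathcal Q^n)\le\frac1mH_\mu(\mathcal Q^m)$. Then
$$h_\mu(f)\le\liminf_n\Bigl(\tfrac1nH_\nu(\mathcal Q^n)+\tfrac1nH_\nu(\mathcal P^n|\mathcal Q^n)\Bigr)+\tfrac{\tau}{2}\le \frac1mH_\mu(\mathcal Q^m)+\limsup_n\frac1nH_\nu(\mathcal P^n|\mathcal Q^n)+\tau ,$$
using $\liminf(a_n+b_n)\le\limsup a_n+\limsup b_n$. The main point requiring care is the weak* convergence of $\nu^E$, which depends on the uniformity in \eqref{3.3} over $K$. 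The condition $\mu(\partial\mathcal P)=0$ is not used here; it is presumably kept for later applications.
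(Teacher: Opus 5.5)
Your proof is correct and follows the paper's argument. Both start from \eqref{3.2} and split $H_\nu(\mathcal P^n)\le H_\nu(\mathcal Q^n)+H_\nu(\mathcal P^n|\mathcal Q^n)$. Both then bound $\frac1nH_\nu(\mathcal Q^n)$ by Lemma \ref{Lemma 2.3} with $E=\{0,\dots,n-1\}$, combined with the weak* convergence $\frac1n\sum_{j<n}f^j_*\nu\to\mu$ that comes from \eqref{3.3}. You also supply details the paper leaves implicit: the vanishing $\frac{12m}{n}\log\#\mathcal Q$ error term, and the convergence $H_{\nu^E}(\mathcal Q^m)\to H_\mu(\mathcal Q^m)$, which uses $\mu(\partial\mathcal Q)=0$ from the standing assumptions of Proposition \ref{Proposition A}. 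The $\tau/2$ splitting is harmless but unnecessary.
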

\begin{proof}
By \eqref{3.3}, one has 
$$\frac{1}{n}\sum_{i=0}^{n-1}f^i_{*}\mu_{x_0,K,\mathrm{Im}\sigma}=\int_{K}\frac{1}{n}\sum_{i=0}^{n-1}\delta_{f^ix}d\mu_{x_0,K,\mathrm{Im}\sigma}\to\mu,$$
as $n\to+\infty$. Therefore, for any $m\in\mathbb N^+$,
 \begin{equation}\label{3.21}
 \begin{aligned}
 h_{\mu}(f)&\le \liminf_{n\to+\infty}\frac{1}{n}H_{\mu_{x_0,K,\mathrm{Im}\sigma}} (\mathcal P^n)+\tau\:\text{\color{blue}(diam$\mathcal P<\rho$)}\\
 &\le \limsup_{n\to+\infty}\frac{1}{n}H_{\mu_{x_0,K,\mathrm{Im}\sigma}} (\mathcal Q^n)+\limsup_{n\to+\infty}\frac{1}{n}H_{\mu_{x_0,K,\mathrm{Im}\sigma}}(\mathcal P^n|\:\mathcal Q^n)+\tau\\
 &\le \frac{1}{m}H_{\mu}(\mathcal Q^m)+\limsup_{n\to+\infty}\frac{1}{n}H_{\mu_{x_0,K,\mathrm{Im}\sigma}}(\mathcal P^n|\:\mathcal Q^n)+\tau\text{\color{blue}\:(Lemma \ref{Lemma 2.3})}.
 \end{aligned}    
 \end{equation} 
 \end{proof}
\subsection{Entropy and reparametrization lemma}
The Lebesgue covering dimension of a topological space $X$ is defined as follows: 
$$\mathrm{dim}_{\mathrm{Leb}}(X)$$ is the smallest integer $n\ge -1$ such that every open cover of $X$ has an open refinement with order at most $n+1$, where the order of a cover is the largest integer $m$ such that there exists a point in $X$ belonging to $m$ distinct sets of the cover. It is a basic fact that $\mathrm{dim}_{\mathrm{Leb}}(\mathbf M^d)=d$. Therefore, for any $\rho>0$, there exists an open cover $\mathcal V=\left\{U_1,\cdots, U_{k}\right\}$ of $\mathbf M^d$ with $\mathrm{diam}\mathcal V<\rho$, such that
$$\text{for any }x\in\mathbf M^d,\text{ there is at most } U_{n_{x,1}},\cdots,U_{n_{x,d+1}}\in\mathcal V,\text{ such that }x\in \cap_{1\le i\le d+1}U_{n_{x,i}}.$$
Let $\mathcal P$ be a finite partition of $\mathbf M^d$ satisfying
$$\mathrm{diam}\mathcal P<\rho,\:\mu(\partial\mathcal P)=0;$$
$$\mathcal P=\left\{V_1,\cdots,V_k\right\},\:\text{and for any}\:1\le i\le k,\:V_{i}\subset U_i.$$
Let $l,q\in\mathbb N^+$ and $0<\mathrm{diam}\mathcal Q<\varepsilon<\varepsilon_q(g)$.
Choose $x_{0}\in K$ such that there exists a $C^r$ embedded map $\sigma:[0,1]^{d_u}\to\mathbf M^d$ satisfying
$$\mathrm{Im}\sigma\subset W^u_{loc}(f,x_0);$$
$$\mu_{x_0}(\mathrm{Im}\sigma\cap K)>0;$$
$$\left\|\sigma\right\|_{r}\le\varepsilon.$$
If $d_u=1$, let $\sigma:[0,1]\to\mathbf M^d$ be a $C^r$ embedded curve satisfying
$$\mathrm{Im}\sigma\subset W^u_{loc}(f,x_0);$$
$$\mu_{x_0}(\mathrm{Im}\sigma\cap K)>0;$$
$$\sigma\text{ is strongly}\: \varepsilon\text{-bounded.}$$
\begin{definition}\cite[Definition 4.8, Definition 4.9]{BCS22}\\
(1) Let $\sigma:[0,1]^{l}\to \mathbf M^d$ be a $C^r$ embedded map. A reparametrization of $\sigma$ is a non-constant $C^r$ map $\psi:[0,1]^{l_{\psi}}\to [0,1]^{l}$ with $l_{\psi}\le l$. A family of reparametrizations of $\sigma$ over a subset $T\subset [0,1]^l$ is a collection $\mathcal R$ of reparametrizations such that $T\subset \cup_{\psi\in\mathcal R}\psi([0,1]^{l_{\psi}})$.\\
(2) A reparametrization $\psi$ of $\sigma$ is $(C^r,f,q,\varepsilon)$-admissible up to time $n$, if there exists an increasing sequence $(n_0,n_1,\cdots, n_t)$ such that\\
$\bullet\:n_0=0,n_{t}=n$ and $n_j-n_{j-1}\le q$ for any $1\le j\le t$,\\
$\bullet\:$for any $0\le j\le t$, $\left\|f^{n_j}\circ\sigma\circ\psi\right\|_{r}\le\varepsilon$.\\
We call the integers $n_j,\:1\le j\le t$ the admissible times. A family $\mathcal R$ of reparametrization of $\sigma$ over $T$, which is $(C^r,f,q,\varepsilon)$-admissible up to time $n$, if each $\psi\in\mathcal R$ is $(C^r,f,q,\varepsilon)$-admissible up to time $n$.
\end{definition}
Let 
\begin{equation}
\begin{aligned}
C_{n,q}(f)=\max_{F_n\in\mathcal Q^n}\min\left\{\lceil\frac{1}{n}\log\#\mathcal R_n\rceil:\mathcal R_n\:\text{is a family of reparametrization of $\sigma$ over}\right.\\
\left.\text{$\sigma^{-1}(F_n\cap K)$, which is $(C^r,f,q,\varepsilon)$-admissible up to time $n$}\right\}.
\end{aligned}
\end{equation}
The following lemma indicates the relationship between entropy and the cardinality of the family of reparametrization. Moreover, $C_{n,q}(f)$ is well-defined.
\begin{lemma}\label{Lemma 3.2}
For any $0<\varepsilon<\varepsilon_{q}(g)$, for any $n\in\mathbb N^+$, for any $F_n\in\mathcal Q^n$, there exists a family $\mathcal R_{n}$ of reparametrization of $\sigma$ over $\sigma^{-1}(F_n\cap K)$, which is $(C^r,f,q,\varepsilon)$-admissible up to time $n$. Moreover,
$$\limsup_{n\to+\infty}\frac{1}{n}H_{\mu_{x_0,K,\mathrm{Im}\sigma}}(\mathcal P^n|\:\mathcal Q^n)\le \limsup_{n\to+\infty}C_{n,q}(f)+\log (d+1).$$
\end{lemma}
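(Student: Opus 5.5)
The argument has two parts: first build an admissible family, then bound conditional entropy by counting.

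\emph{Existence of admissible families.} Fix $F_n\in\mathcal Q^n$. I will build the family by induction along the times $0,q,2q,\dots$, with the last block possibly shorter than $q$. At time $0$ take $\mathcal R=\{\mathrm{id}\}$; this is admissible because $\|\sigma\|_r\le\varepsilon$. For the inductive step, suppose $\psi$ is admissible up to time $n_j$ and $\|f^{n_j}\circ\sigma\circ\psi\|_r\le\varepsilon$. Put $g=f^{s}$ with $s=\min\{q,n-n_j\}$. Then $\|g\|_r<\gamma^{s}$, and $\varepsilon<\varepsilon_{\gamma^s}$ by the choice of $\varepsilon_q(g)$.

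\begin{itemize}
\item If $d_u\ge2$, apply Lemma \ref{2.6} to $f^{n_j}\circ\sigma\circ\psi$. Use a ball $B(y,\varepsilon)$ containing the atom of $\mathcal Q$ visited at time $n_j+s$; such a ball exists since $\mathrm{diam}\mathcal Q<\varepsilon$. Take the finitely many values $\mathbf k$ attained by $k^+_g$ on that atom. These lie within $1$ of each other by the choice of $\mathcal V_g^2$, so after rounding only finitely many $\mathbf k$ are needed.
\item If $d_u=1$, use Lemma \ref{Lemma B} followed by Lemma \ref{Lemma C} in the same way.
\end{itemize}

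Composing gives reparametrizations $\psi\circ\psi'$. These satisfy $\|f^{n_j+s}\circ\sigma\circ\psi\circ\psi'\|_r\le\varepsilon$, and $\|D\psi'\|_0\le1$. Their images cover $\sigma^{-1}(F_n\cap K)\cap\psi([0,1]^{l_\psi})$. After finitely many steps this yields a finite admissible family over $\sigma^{-1}(F_n\cap K)$. So $C_{n,q}(f)$ is a minimum over a nonempty set of integers, hence well defined.

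\emph{Entropy bound.} I will write
\[
H_{\mu_{x_0,K,\mathrm{Im}\sigma}}(\mathcal P^n|\mathcal Q^n)=\sum_{F_n}\nu(F_n)H_{\nu_{F_n}}(\mathcal P^n),\qquad \nu=\mu_{x_0,K,\mathrm{Im}\sigma},
\]
so it suffices to show that each $F_n\cap K\cap\mathrm{Im}\sigma$ meets at most $\#\mathcal R_n\cdot(d+1)^n$ atoms of $\mathcal P^n$; the entropy of $\mathcal P^n$ restricted to $F_n$ is then at most the logarithm of this count. Take a minimizing family $\mathcal R_n$ with $\frac1n\log\#\mathcal R_n\le C_{n,q}(f)$ and fix $\psi\in\mathcal R_n$.

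For each $0\le i<n$ I need the set $f^i\sigma\psi([0,1]^{l_\psi})$ to have diameter less than the Lebesgue number of $\mathcal V$. At admissible times this holds because $\|f^{n_j}\sigma\psi\|_r\le\varepsilon$ gives diameter $\lesssim\varepsilon$. At intermediate times, $i-n_j<q$, so $\|Df^{i-n_j}\|_0\le\gamma^q$ and the diameter is at most $\gamma^q\varepsilon$. I will therefore shrink $\varepsilon$ so that $\gamma^q\varepsilon$ is below the Lebesgue number of $\mathcal V$; the hypothesis $\varepsilon<\varepsilon_q(g)$, with $\varepsilon_q(g)\le\gamma^{-i}$, is designed for exactly this, possibly together with choosing $\rho$ larger relative to $\varepsilon$. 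With that, each $f^i\sigma\psi([0,1]^{l_\psi})$ lies in a single $U_{a}$. Since every point lies in at most $d+1$ sets $U_{a'}$, and $V_{a'}\subset U_{a'}$, at each time at most $d+1$ atoms $V_{a'}$ meet $f^i\sigma\psi([0,1]^{l_\psi})$. This gives at most $(d+1)^n$ cylinders per $\psi$. Taking logarithms, dividing by $n$, and passing to $\limsup$ gives the claim.

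\emph{Main obstacle.} The hard step is the diameter control at intermediate times, relative to the sizes $\rho$ and $\varepsilon$. I expect to fix $\mathcal V$ and $\mathcal P$ first (size $\rho$), and then choose $\varepsilon$ smaller than $\gamma^{-q}$ times the Lebesgue number of $\mathcal V$, which is compatible with the order of quantifiers in Lemma \ref{Lemma 3.1}. A second, minor point is that the values $\mathbf k$ in Lemma \ref{2.6} must be handled for a continuum of values. I will cover the atom by finitely many level ranges, using the $\pm1$ oscillation and replacing "$=\mathbf k$" by "$\in[\mathbf k,\mathbf k+1]$". This only affects constants, which matter later but not for this lemma.
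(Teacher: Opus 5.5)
Your existence construction is acceptable in outline; the paper essentially takes that part for granted. The entropy bound, however, does not go through as planned, for two reasons.

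\textbf{First gap: you cannot shrink $\varepsilon$.} Your resolution of the ``main obstacle'' is to shrink $\varepsilon$ below $\gamma^{-q}$ times a scale attached to $\mathcal V$. This is not available.
\begin{itemize}
\item The lemma is asserted for every $0<\varepsilon<\varepsilon_q(g)$.
\item Your own existence step needs $\varepsilon>\mathrm{diam}\,\mathcal Q$, so that atoms of $\mathcal Q$ fit in $\varepsilon$-balls.
\item In Proposition A, $\mathcal Q$ is fixed independently of $f\in\mathcal V_g$ and of the ergodic $\mu$. Theorem A relies on this uniformity to use one $\mathcal Q$ for all the $\mu^{i,j}_{n,t}$.
\item By contrast, $\rho$ (hence $\mathcal V$, $\mathcal P$ and any scale attached to them) depends on $\mu$ and on $\tau\to0$, and cannot be enlarged.
\end{itemize}
So the window $\mathrm{diam}\,\mathcal Q<\varepsilon<\gamma^{-q}\cdot(\text{scale of }\mathcal V)$ eventually becomes empty.

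The condition $\varepsilon_q(g)\le\gamma^{-i}$ serves a different purpose. Admissibility combined with it gives the uniform bound $\|D(f^j\circ\sigma\circ\psi)\|_0\le\gamma^{q}\varepsilon\le1$ for \emph{all} $0\le j<n$, at admissible and intermediate times alike.

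That uniform bound is the missing idea. Cut each parameter cube $[0,1]^{l_\psi}$ \emph{once} into $N_\delta\le\lceil\sqrt d/\delta\rceil^{d}$ subcubes of side $<\delta/\sqrt d$. Then every image $f^j\circ\sigma\circ\psi(\text{subcube})$ with $0\le j<n$ has diameter $\le\delta$ simultaneously. The factor $N_\delta$ does not depend on $n$, so it disappears after $\frac1n\log$ and $\limsup$. This is exactly what the paper does: it produces an $(n,\delta)$-spanning set of $F_n\cap K\cap\mathrm{Im}\sigma$ of cardinality $\le N_\delta\,\#\mathcal R_{F_n}$. Subdividing at every block of length $q$ instead would cost a factor per block and leave an error of order $\frac{d_u}{q}\log\frac1\delta$, which does not vanish.

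\textbf{Second gap: the counting step is a non sequitur.} Knowing that $S=f^i\sigma\psi([0,1]^{l_\psi})$ lies in a single $U_a$ gives no bound on how many atoms $V_{a'}$ it meets. The order condition is pointwise, and $U_a$ may overlap, or even contain, many other $U_{a'}$. Hence a set smaller than the Lebesgue number of $\mathcal V$ can still meet more than $d+1$ of them; the Lebesgue number is the wrong scale.

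What you need is a $\delta>0$ such that every set of diameter $\le\delta$ meets at most $d+1$ atoms of $\mathcal P$. The paper takes this from \cite[Lemma 3.3]{B72}. It follows by compactness once the \emph{closures} of the atoms form a cover of order $\le d+1$, for instance by taking each $V_a$ inside a closed shrinking of $\mathcal V$.

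With this $\delta$ and the one-time subdivision above, each subcube piece meets at most $(d+1)^n$ atoms of $\mathcal P^n$. Therefore $\#\{E_n\in\mathcal P^n:E_n\cap F_n\cap K\cap\mathrm{Im}\sigma\ne\emptyset\}\le N_\delta\,\#\mathcal R_{F_n}\,(d+1)^n$, and your Jensen step then gives the lemma.
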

\begin{proof}
For any $F_n\in\mathcal Q^n$, the existence of $\mathcal R$ of reparametrization of $\sigma$ over $\sigma^{-1}(F_n\cap K)$, which is $(C^r,f,q,\varepsilon)$-admissible up to time $n$ and the well-definedness of $C_{n,q}(f)$ hold. 

For any $n\in\mathbb N^+$, for any $F_n\in\mathcal Q^n$, 
choose $\mathcal R_{F_n}$ of reparametrization of $\sigma$ over $\sigma^{-1}(F_n\cap K)$, which is $(C^r,f,q,\varepsilon)$-admissible up to time $n$, satisfying
$$\max_{F_n\in\mathcal Q^n}\left\{\lceil\frac{1}{n}\log\#\mathcal R_{F_n}\rceil\right\}=C_{n,q}(f).$$
And it suffices to prove
$$ \limsup_{n\to+\infty}\frac{1}{n}H_{\mu_{x_0,K,\mathrm{Im}\sigma}}(\mathcal P^n|\:\mathcal Q^n)\le \limsup_{n\to+\infty}\frac{1}{n}\log\max_{F_n\in\mathcal Q^n}\#\mathcal R_{F_n}+\log(d+1).$$
By Jensen's inequality, we have
\begin{equation}\label{3.2-1}
\begin{aligned}
 \limsup_{n\to+\infty}\frac{1}{n}H_{\mu_{x_0,K,\mathrm{Im}\sigma}}(\mathcal P^n|\:\mathcal Q^n)&\le \limsup_{n\to+\infty}\frac{1}{n}\sum_{F_n\in\mathcal Q^n}\mu_{x_0,K,\mathrm{Im}\sigma}(F_n)\\
 &\log\#\left\{E_n\in\mathcal P^n:E_n\cap F_n\cap  K\cap\mathrm{Im}\sigma\ne\emptyset\right\}\\
 &\le \limsup_{n\to+\infty}\frac{1}{n}\log\max_{F_n\in\mathcal Q^n}\#\left\{E_n\in\mathcal P^n:E_n\cap F_n\cap  K\cap\mathrm{Im}\sigma\ne\emptyset\right\}.
\end{aligned}    
\end{equation}
 Then by \cite[Lemma 3.3]{B72} and the definition of $\mathcal P$, there exists $\delta>0$, for any $n\in\mathbb N^+$,
$$\max_{F_n\in\mathcal Q^n}\#\left\{E_n\in\mathcal P^n:E_n\cap F_n\cap  K\cap\mathrm{Im}\sigma\ne\emptyset\right\}\le \max_{F_n\in\mathcal Q^n}r_{n}( F_n\cap  K\cap\mathrm{Im}\sigma,\delta)(d+1)^n,$$
where $r_{n}(E,\delta)=\min\left\{\# \Lambda_n:\Lambda_n\:\mathrm{is}\:(n,\delta)\mathrm{-spanning}\:E\right\}$ for $E\subset \mathbf M^d$. Therefore, by \eqref{3.2-1}, we have
\begin{equation}\label{3.2-2}
 \limsup_{n\to+\infty}\frac{1}{n}H_{\mu_{x_0,K,\mathrm{Im}\sigma}}(\mathcal P^n|\:\mathcal Q^n)\le \limsup_{n\to+\infty}\frac{1}{n}\log \max_{F_n\in\mathcal Q^n}r_{n}( F_n\cap  K\cap\mathrm{Im}\sigma,\delta)+\log (d+1).     
\end{equation}
By the definition of reparametrization, for any $F_n\in\mathcal Q^n$, for any $\theta\in\mathcal R_{F_n}$, for any $0\le j\le n-1$, we have
$$\left\|D(f^j\circ\sigma\circ\theta)\right\|_{0}\le 1.$$
Let $\theta\in\mathcal R_{F_n}$, and let $\Psi_{\delta,\theta}$ be the closed cover of $[0,1]^{l'_{\theta}}$ by cubes of diameter less than $\frac{1}{\sqrt{d}}\delta$. Moreover, we assume that $\#\Psi_{\delta,\theta}=C_{\delta,l'_{\theta},d}$. Then $\Psi_{\delta,\theta}$ induces the family of affine maps $\mathcal R_{\delta,\theta}$ satisfying\\
$\bullet\:\#\mathcal R_{\delta,\theta}=\#\Psi_{\delta,\theta}$;\\
$\bullet\:[0,1]^{l'_{\theta}}= \cup_{\psi\in R_{\delta,\theta}}\psi([0,1]^{l'_{\theta}})$;\\
$\bullet\:$for any $\psi\in\mathcal R_{\delta,\theta}$, for any $0\le j\le n-1$, $\left\|D(f^j\circ\sigma\circ\theta\circ\psi)\right\|_{0}\le \frac{1}{\sqrt{d}}\delta.$\\
Moreover,
$$\sigma^{-1}(F_n\cap K)\subset \cup_{\theta\in\mathcal R_{F_n}}\theta([0,1]^{l'_{\theta}})\subset \cup_{\theta\in\mathcal R_{F_n}}\cup_{\psi\in\mathcal R_{\delta,\theta}}\theta\circ\psi([0,1]^{l'_{\theta}}).$$
Therefore, 
$$F_n\cap K\cap\mathrm{Im}\sigma\subset \cup_{\theta\in\mathcal R_{F_n}}\cup_{\psi\in\mathcal R_{\delta,\theta}}\sigma\circ\theta\circ\psi([0,1]^{l'_{\theta}}).$$
Thus, $\left\{s_{\sigma,\theta,\psi}\in \mathrm{Im(\sigma\circ\theta\circ\psi)}:\theta\in\mathcal R_{F_n},\psi\in\mathcal R_{\delta,\theta}\right\}$ is $(n,\delta)$-spanning $F_n\cap K\cap \mathrm{Im}\sigma$. And therefore
$$r_n(F_n\cap K\cap \mathrm{Im}\sigma,\delta)\le \max_{0\le i\le d}\left\{\#\Psi_{\sigma,i}\right\}\times \#\mathcal R_{F_n}.$$
By \eqref{3.2-2}, we have
$$\limsup_{n\to+\infty}\frac{1}{n}H_{\mu_{x_0,K,\mathrm{Im}\sigma}}(\mathcal P^n|\:\mathcal Q^n)\le \limsup_{n\to+\infty}\frac{1}{n}\log\max_{F_n\in\mathcal Q^n}\#\mathcal R_{F_n}+\log(d+1).$$
It implies that
$$\limsup_{n\to+\infty}\frac{1}{n}H_{\mu_{x_0,K,\mathrm{Im}\sigma}}(\mathcal P^n|\:\mathcal Q^n)\le \limsup_{n\to+\infty}C_{n,q}(f)+\log (d+1).$$
\end{proof}
\subsection{Yomdin's estimation}
\begin{lemma}\label{Lemma 3.3}
\begin{equation}
\begin{aligned}
\limsup_{n\to+\infty}C_{n,q}(f)&\le\frac{d_{u}}{qr}(\int\log^+{\left\|D_zf^q\right\|}d\mu(z)+1)+\frac{\log 3qC_{r,d_u,d}}{q}.
\end{aligned}
\end{equation}  
\end{lemma}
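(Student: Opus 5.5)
We construct, for each $F_n\in\mathcal Q^n$, an admissible family of reparametrizations over $\sigma^{-1}(F_n\cap K)$ by induction on the admissible times. We take them to be multiples of $q$ shifted by a phase $c\in\{0,\dots,q-1\}$: $n_0=0$, $n_1=c$, $n_{j+1}=n_j+q$, with a final step of length at most $q$ to reach $n$. We keep $c$ free for now and choose it at the end.

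Let $\mathcal R_j$ be the family at stage $j$, satisfying $\|f^{n_j}\circ\sigma\circ\psi\|_r\le\varepsilon$. Each $\psi\in\mathcal R_j$ and each block of the form $f^{n_{j+1}-n_j}$ is handled as follows. We apply Lemma \ref{2.6} with $g=f^{n_{j+1}-n_j}$. This is allowed because $\|g\|_r<\gamma^{q}$ and $\varepsilon<\varepsilon_{\gamma^i}$ by the choice of $\varepsilon_q(g)$. We apply it to the map $f^{n_j}\circ\sigma\circ\psi$ with $m=d_u$, and the ball $B(y,\varepsilon)$ is taken to contain the element of $\mathcal Q$ visited by $F_n$ at time $n_{j+1}$. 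The lemma needs a fixed value $\mathbf k$ of $k_g^+$. Since $\operatorname{diam}\mathcal Q<\varepsilon_q$ and by the choice of $\mathcal V_g^2$, the quantity $\log^+\|D_xf^{i}\|$ varies by at most $1$ over the element of $\mathcal Q$ visited at time $n_j$. We therefore slice $[\mathbf k_j,\mathbf k_j+1]$, where $\mathbf k_j=\min$ over that element, into finitely many levels. More simply, we rerun the proof of Lemma \ref{2.6} with $\|D_zg\|\le e^{\mathbf k_j+1}$, which yields at most $C_{r,d_u,d}e^{\frac{d_u}{r}(\mathbf k_j+1)}$ pieces. Composing gives $\psi\circ\psi'$. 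Property 5, $\|D\psi'\|\le1$, keeps the derivative bounds, and property 3 gives admissibility at time $n_{j+1}$.

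Multiplying these counts over the roughly $n/q$ stages, we obtain for every $F_n$ a family of size at most
\[
\bigl(C_{r,d_u,d}\bigr)^{n/q+2}\exp\Bigl(\tfrac{d_u}{r}\sum_{j}\bigl(\log^+\|D_{f^{qj+c}x}f^q\|+1\bigr)\Bigr)\cdot e^{O(q\log\gamma)},
\]
where $x$ is any point of $F_n\cap K\cap\operatorname{Im}\sigma$. The first and last blocks, of length less than $q$, only contribute the $e^{O(q\log\gamma)}$ factor, which is negligible after dividing by $n$. If $F_n\cap K\cap\operatorname{Im}\sigma$ is empty, the empty family suffices. The uniform convergence \eqref{3.4} on $K$ gives
\[
\tfrac{1}{n}\sum_j\log^+\|D_{f^{qj+c}x}f^q\|\le \tfrac{1}{q}\phi_c^+(x)+o(1),
\]
uniformly in $x\in K$. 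Here a single $x$ serves for the whole cylinder, because all points of $F_n$ are $\varepsilon_q$-close along the orbit and hence their derivative norms agree up to the additive $1$ already accounted for.

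It remains to remove the dependence on $\phi_c^+(x)$, which depends on $x$. Since $\frac1q\sum_c\phi_c^+(x)=\int\log^+\|D_zf^q\|\,d\mu$, for each $x$ some phase $c$ satisfies $\phi_c^+(x)\le\int\log^+\|D_zf^q\|\,d\mu$. We use all $q$ phases: $K$ is split into the sets $K_c$ where $c$ is a good phase, and the union of the $q$ families is taken, each over $\sigma^{-1}(F_n\cap K_c)$. This costs a factor $q$, which after taking $\frac1n\log$ is harmless. Alternatively, we absorb it together with $3C_{r,d_u,d}$ into the constant $\frac{\log 3qC_{r,d_u,d}}{q}$; the factor $3$ accounts for the extra ball-localization step. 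Taking $\frac1n\log$, the ceiling, and the $\limsup$ then yields exactly the bound $\frac{d_u}{qr}\bigl(\int\log^+\|D_zf^q\|\,d\mu+1\bigr)+\frac{\log 3qC_{r,d_u,d}}{q}$.

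The main obstacle is the bookkeeping that makes one value $\mathbf k$ serve a whole cylinder, uniformly in $n$. This requires that the $\mathcal Q$-cylinder pins down the derivative up to $\pm1$ at each block, and that the Birkhoff averages along phase $c$ are controlled uniformly on $K$; that is what \eqref{3.4} and the choice of $\varepsilon_q(g)$ are designed for.
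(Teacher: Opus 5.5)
Your proposal is correct and follows essentially the same route as the paper. You split $K$ into the good-phase sets $K_c$ where $\phi_c^+\le\int\log^+\|D_zf^q\|\,d\mu$, iterate Lemma \ref{2.6} along the admissible times $c+q\mathbb N$ (using that a $\mathcal Q^n$-cylinder pins $\log^+\|Df^q\|$ down to within $1$), and conclude by the uniform convergence on $K$. The only difference is bookkeeping: the paper stratifies $F_n\cap K\cap\mathrm{Im}\sigma$ into the level sets $F_n^{c,\mathbf k}$ and counts them. That count, $(3q)^{[\frac{n-c}{q}]+1}$, is where the $\frac{\log 3q}{q}$ term comes from, not ball localization as you suggest. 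Your upper-bound rerun of Lemma \ref{2.6} avoids that count and gives a slightly sharper bound.
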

\begin{proof}
By the choice of $K$, the following convergence holds uniformly for $x\in K$
\begin{equation*}
\begin{aligned}
&\sum_{c=0}^{q-1}\lim_{m\to+\infty}\frac{1}{m}\sum_{j=0}^{m-1}\log^+\left\|D_{f^{qj+c}x}f^q\right\|=\lim_{n\to+\infty}\frac{q}{n}\sum_{j=0}^{n-1}\log^+\left\|D_{f^{j}x}f^q\right\|=q\int\log^+{\left\|D_zf^q\right\|}d\mu(z).
\end{aligned}
\end{equation*}
Hence, for every $x\in K$, there exists $c(x)\in\left\{0,1,\cdots,q-1\right\}$ such that
\begin{equation}\label{3.8}
\begin{aligned}
&\lim_{m\to+\infty}\frac{1}{m}\sum_{j=0}^{m-1}\log^+\left\|D_{f^{qj+c(x)}x}f^q\right\|\le \int\log^+{\left\|D_zf^q\right\|}d\mu(z).
\end{aligned}
\end{equation}
We decompose $K$ to be the union of $\left\{K_c\right\}_{0\le c\le q-1}$, such that for any $x\in K_c$, $c(x)=c$. For any $F_n\in \mathcal Q^n$, we decompose $F_n\cap K\cap \mathrm{Im}\sigma$ to be the union of $F_n^{c,\mathbf k}$, such that for any $x\in F_n^{c,\mathbf k}$, one has\\

$\bullet\:c(x)=c$;\\

$\bullet\:\lceil\log^+\left\|D_xf^c\right\|\rceil=k_{0};\\$

$\bullet\:\forall 1\le j\le [\frac{n-c}{q}],\:\lceil\log^+\left\|D_{f^{(j-1)q+c}x}f^q\right\|\rceil=k_{j};\\$

$\bullet\:\lceil\log^+\left\|D_{f^{[\frac{n-c}{q}]q+c}x}f^{n-([\frac{n-c}{q}]q+c)}\right\|\rceil=k_{[\frac{n-c}{q}]}$;\\

$\bullet\:\mathbf k=\left\{k_{j}\right\}_{0\le j\le [\frac{n-c}{q}]}\in \mathbb N^{[\frac{n-c}{q}]+1}$.\\
 
There are at most $q(3q)^{[\frac{n-c}{q}]+1}$ possible choices, such that $F_{n}^{c,\mathbf k}\ne\emptyset$.

Suitably choose $y\in F_{n}^{c,\mathbf k}\cap K\cap\mathrm{Im\sigma}$, satisfying
$$\sigma^{-1}(F_n^{c,\mathbf k}\cap K)\subset \sigma^{-1}(\left\{z\in F_n^{c,\mathbf k}:\mathrm{d}(f^{c+jq}(z),f^{c+jq}(y))\le\varepsilon,\forall\:0\le j\le[\frac{n-c}{q}]\right\}).$$
Therefore, by Lemma \ref{2.6}, according to admissible times $(c+q\mathbb N)\cup\left\{0,n\right\}\cap\left\{0,1,\cdots,n\right\}$, there exists a family $\Gamma_{F_n}^{c,\mathbf k}$ of reparametrization of $\sigma$ over $\sigma^{-1}(F_n^{c,\mathbf k}\cap K)$, which is $(C^r,f,q,\varepsilon)$-admissible up to time $n$, such that
\begin{equation}\label{11}
\begin{aligned}
\#\Gamma_{F_n}^{c,\mathbf k}&\le C_{r,d_{u},d}^{[\frac{n-c}{q}]+1}\mathrm{exp}(\frac{d_u}{r}\sum_{j=0}^{[\frac{n-c}{q}]+1}k_{j}).
\end{aligned}
\end{equation}
By \eqref{3.8} and \eqref{11}, there exists $x\in F_n\cap K\cap\mathrm{Im}\sigma$, such that
\begin{equation*}
\begin{aligned}
\limsup_{n\to+\infty}\frac{1}{n}\log\#\Gamma_{F_n}^{c,\mathbf k}&\le
\limsup_{n\to+\infty}\frac{1}{n}\frac{d_u}{r}\sum_{j=0}^{[\frac{n-c}{q}]}(\log^+\left\|D_{f^{qj+c}x}f^q\right\|+1)+\frac{\log C_{r,d_u,d}}{q}\\
&\le\frac{d_u}{rq}(\int\log^+{\left\|D_zf^q\right\|}d\mu(z)+1)+\frac{\log C_{r,d_u,d}}{q}.
\end{aligned}    
\end{equation*}
Therefore, for any $F_n\in\mathcal Q^n$, according to admissible times $(c(x)+q\mathbb N)\cup\left\{0,n\right\}\cap\left\{0,1,\cdots,n\right\}$, there exists a family $\mathcal R_{F_n}$ of reparametrization of $\sigma$ over $\sigma^{-1}(F_n\cap K)$, which is $(C^r,f,q,\varepsilon)$-admissible up to time $n$, such that
\begin{equation*}
\begin{aligned}
\limsup_{n\to+\infty}\frac{1}{n}\log\max_{F_n\in\mathcal Q^n}\#\mathcal R_{F_n}&\le\frac{d_u}{qr}(\int\log^+{\left\|D_zf^q\right\|}d\mu(z)+1)+\frac{\log 3qC_{r,d_u,d}}{q}.
\end{aligned}    
\end{equation*}
It implies that
\begin{equation*}
\begin{aligned}
\limsup_{n\to+\infty}C_{n,q}(f)&\le\frac{d_u}{qr}(\int\log^+{\left\|D_zf^q\right\|}d\mu(z)+1)+\frac{\log 3qC_{r,d_u,d}}{q}.
\end{aligned}    
\end{equation*} 
\end{proof}
\subsection{Burguet's estimation}
\begin{lemma}\label{Lemma 3.5}
Assume that $\mathrm{dim}E^u_{f}(z)=1$, for $\mu$-a.e.\:$z\in\mathbf M^d$. Then, one has
\begin{equation}
\begin{aligned}
\limsup_{n\to+\infty}C_{n,q}(f)&\le\frac{1}{r-1}(\frac{1}{q}\int\log{\left\|D_zf^q\right\|}d\mu(z)-\lambda_1(f,\mu)+\frac{1}{q})+\frac{\log 3qC_{r,d}A_{f}^q}{q},
\end{aligned}
\end{equation} 
where
$$A_f^q=q(\log^+\left\|Df\right\|_{0}+\log^+\left\|Df^{-1}\right\|_{0}+1).$$
\end{lemma}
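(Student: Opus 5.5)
The plan is to mirror the proof of Lemma \ref{Lemma 3.3}, with Burguet's reparametrization lemma (Lemma \ref{Lemma B}) and Lemma \ref{Lemma C} replacing Lemma \ref{2.6}. Since $d_u=1$, $\sigma$ is a strongly $\varepsilon$-bounded curve in $W^u_{loc}(f,x_0)$. The tangent line of $f^j\circ\sigma$ at $f^j x$ is $E^1_f(f^jx)=E^u_f(f^jx)$ for every $x\in \mathrm{Im}\sigma$. Consequently $k'_{f^q,f^{j}\circ\sigma}(f^jx)=\log\|D_{f^jx}f^q|_{E^1_f(f^jx)}\|$, and this sum telescopes along an orbit.

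\textbf{Step 1 (choosing residues).} As in Lemma \ref{Lemma 3.3}, I use the uniform convergence in \eqref{3.41} and \eqref{3.3} on $K$. For each $x\in K$ I pick $c(x)\in\{0,\dots,q-1\}$ with
$$\lim_{m}\frac1m\sum_{j<m}\log\|D_{f^{qj+c(x)}x}f^q\|\le q\int\log\|D_zf^q\|d\mu(z).$$
This is possible because the average over $c$ of $\phi_c$ equals the right-hand side divided by $q$, after normalization. The unstable telescoping sum satisfies $\frac1m\sum_{j<m}\log\|D_{f^{qj+c}x}f^q|_{E^1}\|\to q\lambda_1(f,\mu)$ uniformly for every $c$, by \eqref{3.3}.

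\textbf{Step 2 (stratification).} For each $F_n\in\mathcal Q^n$ I split $F_n\cap K\cap\mathrm{Im}\sigma$ into the sets $F_n^{c,\mathbf k,\mathbf k'}$. These are indexed by $c$ and by the integer parts $\lceil\cdot\rceil$ of both $\log\|D_{f^{(j-1)q+c}x}f^q\|$ and $\log\|D_{f^{(j-1)q+c}x}f^q|_{E^1}\|$, including the initial block $f^c$ and the final incomplete block. Each of these quantities lies in $[-A_f^q,A_f^q]$. Hence there are at most $q(3A_f^q)^{2([\frac{n-c}{q}]+1)}$ nonempty strata, which accounts for the factor $A_f^q$ in the bound. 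Here I use that $\log\|D_xf^i\|$ varies by at most $1$ on $\varepsilon_q$-balls, by the choice of $\mathcal V_g$, so that fixing the integer part pins the actual values up to $1$.

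\textbf{Step 3 (inductive reparametrization).} At each admissible time $c+jq$ I apply Lemma \ref{Lemma B} with $g=f^q$ (with $\|f^q\|_r<\gamma^q$ and $\varepsilon<\varepsilon'_{\gamma^q}$) to each current $C^r$ bounded curve $f^{c+jq}\circ\sigma\circ\theta$. This yields at most $C_{r,d}e^{(k_j-k'_j+2)/(r-1)}$ affine maps making the next image $C^r$ bounded. Then Lemma \ref{Lemma C}, applied with the ball $B(f^{c+(j+1)q}y,\varepsilon)$ around a reference point $y$ of the stratum, restores strong $\varepsilon$-boundedness with a single extra affine map. This requires that the stratum lie in an $\varepsilon$-Bowen ball at the admissible times, which I arrange by intersecting with $F_n$ since $\mathrm{diam}\mathcal Q<\varepsilon$. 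The first block ($f^c$, $g=f^c$) and the last block are handled the same way. The intermediate times between admissible times need $\|f^{i}\circ\sigma\circ\theta\|_r\le\varepsilon$ for $i<q$, and I control this through $\varepsilon_q(g)\le\gamma^{-i}$, as in the definition of admissibility. Multiplying over blocks gives
$$\#\Gamma^{c,\mathbf k,\mathbf k'}_{F_n}\le C_{r,d}^{[\frac{n-c}{q}]+2}\exp\Big(\tfrac1{r-1}\sum_j (k_j-k_j'+2)\Big).$$

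\textbf{Step 4 (limits).} Dividing by $n$ and using Step 1 along an orbit of a point of the stratum, I obtain $\frac1n\sum_j k_j\to\le\frac1q\int\log\|D_zf^q\|d\mu$ and $\frac1n\sum_j k'_j\to\lambda_1(f,\mu)$, up to $\frac1q$ from the rounding. Adding the stratum count $\frac1n\log\big(q(3A_f^q)^{2n/q}\big)$ produces the claimed bound, after absorbing constants into $C_{r,d}$.

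The main obstacle is the uniformity in Step 4. The sums $k_j,k_j'$ are evaluated at a point of the stratum and must agree with the Birkhoff limits. This holds because $K$ was chosen so that the convergence is uniform, and within a stratum the values agree up to rounding. A second concern is the sign: the $k'_j$ must be lower-bounded uniformly by $n\lambda_1$ rather than merely on average. I would handle this by using uniform convergence in \eqref{3.3} for the points of $K$ themselves, since the strata only contain points of $K$.
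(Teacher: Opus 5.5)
Your proposal is correct and follows the paper's proof essentially step for step: residue selection on $K$, stratification of $F_n\cap K\cap\mathrm{Im}\sigma$ by $c$ and by the blockwise integer parts of $\log\|Df^q\|$ and $\log\|Df^q|_{E^u}\|$, Lemma \ref{Lemma B} followed by Lemma \ref{Lemma C} at the admissible times $c+jq$, and uniform Birkhoff limits on $K$. Choosing $c(x)$ by the full norm and telescoping the one-dimensional unstable part, instead of choosing it by the difference as in \eqref{3.14}, is an equivalent variant. Only the bookkeeping needs care:
\begin{itemize}
\item In Step 1 the bound should read $\le\int\log\|D_zf^q\|\,d\mu(z)$, without the extra factor $q$; this is the form you actually use in Step 4.
\item The control at intermediate times in Step 3 is not required by the definition of admissibility. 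The condition $\varepsilon_q(g)\le\gamma^{-i}$ only serves Lemma \ref{Lemma 3.2}, and it gives $C^1$ control of size $1$, not $C^r$ size $\varepsilon$.
\item Your count of $(3A_f^q)^2$ strata per block exceeds the paper's $3qA_f^q$ by the factor $3(\log^+\|Df\|_0+\log^+\|Df^{-1}\|_0+1)$. This is not a constant depending only on $r,d$. It contributes only $O(1/q)$ uniformly on $\mathcal V_g$, so it is harmless for Proposition \ref{Proposition B} and Theorem \ref{Theorem A}.
\end{itemize}
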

\begin{proof}
By the choice of $K$, the following convergence holds uniformly for $x\in K$
\begin{equation*}
\begin{aligned}
&\sum_{c=0}^{q-1}\lim_{m\to+\infty}\frac{1}{m}\sum_{j=0}^{m-1}(\log\left\|D_{f^{qj+c}x}f^q\right\|-\log\left\|D_{f^{qj+c}x}f^q|_{E^u_f(f^{qj+c}x)}\right\|)\\
&=\lim_{n\to+\infty}\frac{q}{n}\sum_{j=0}^{n-1}(\log\left\|D_{f^{j}x}f^q\right\|-\log \left\|D_{f^{j}x}f^q|_{E^u_f(f^jx)}\right\|)\\
&=q(\int\log{\left\|D_zf^q\right\|}d\mu(z)-q\lambda_1(f,\mu)).
\end{aligned}
\end{equation*}
Hence, for every $x\in K$, there exists $c(x)\in\left\{0,1,\cdots,q-1\right\}$ such that
\begin{equation}\label{3.14}
\begin{aligned}
&\lim_{m\to+\infty}\frac{1}{m}\sum_{j=0}^{m-1}(\log\left\|D_{f^{qj+c(x)}x}f^q\right\|-\log \left\|D_{f^{qj+c(x)}x}f^q|_{E^u_f(f^{qj+c(x)}x)}\right\|)\\
&\le \int\log{\left\|D_zf^q\right\|}d\mu(z)-q\lambda_1(f,\mu).
\end{aligned}
\end{equation}
We decompose $K$ to be the union of $\left\{K_c\right\}_{0\le c\le q-1}$, such that for any $x\in K_c$, $c(x)=c$. For any $F_n\in \mathcal Q^n$, we decompose $F_n\cap K\cap \mathrm{Im}\sigma$ to be the union of $F_n^{c,\mathbf k,\mathbf k'}$, such that for any $x\in F_n^{c,\mathbf k,\mathbf k'}$, one has\\

$\bullet\:c(x)=c$;\\

$\bullet\:\lceil\log\left\|D_xf^c\right\|\rceil=k_{0},\:\lceil\log \left\|D_xf^c|_{E^u_{f}(x)}\right\|\rceil=k'_{0};\\$

$\bullet\:\forall 1\le j\le [\frac{n-c}{q}],\:\lceil\log\left\|D_{f^{(j-1)q+c}x}f^q\right\|\rceil=k_{j},\:\lceil\log \left\|D_{f^{(j-1)q+c}x}f^q|_{E^u_{f}(f^{(j-1)q+c}x)}\right\|\rceil=k'_{j};\\$

$\bullet\:\lceil\log\left\|D_{f^{[\frac{n-c}{q}]q+c}x}f^{n-([\frac{n-c}{q}]q+c)}\right\|\rceil=k_{[\frac{n-c}{q}]},\:\lceil\log \left\|D_{f^{[\frac{n-c}{q}]q+c}x}f^{n-([\frac{n-c}{q}]q+c)}|_{E^u_{f}(f^{[\frac{n-c}{q}]q+c}x)}\right\|\rceil=k'_{[\frac{n-c}{q}]};$\\

$\bullet\:\mathbf k,\mathbf k'\in \mathbb Z^{[\frac{n-c}{q}]+1}$.\\

There are at most $q(3qA_f^q)^{[\frac{n-c}{q}]+1}$ possible choices, such that $F_{n}^{c,\mathbf k,\mathbf k'}\ne\emptyset$, where
$$A_f^q=q(\log^+\left\|Df\right\|_{0}+\log^+\left\|Df^{-1}\right\|_{0}+1).$$

Suitably choose $y\in F_n\cap K\cap\mathrm{Im\sigma}$, satisfying
$$\sigma^{-1}(F_n^{c,\mathbf k,\mathbf k'}\cap K)\subset \sigma^{-1}(\left\{z\in F_n^{c,\mathbf k,\mathbf k'}:\mathrm{d}(f^{c+jq}(z),f^{c+jq}(y))\le\varepsilon,\forall\:0\le j\le [\frac{n-c}{q}]\right\}).$$
By Lemma \ref{Lemma B} and Lemma \ref{Lemma C}, according to admissible times $(c+q\mathbb N)\cup\left\{0,n\right\}\cap\left\{0,1,\cdots,n\right\}$, there exists a family $\Gamma_{F_n}^{c,\mathbf k,\mathbf k'}$ of reparametrization of $\sigma$ over $\sigma^{-1}(F_n^{c,\mathbf k,\mathbf k'}\cap K)$, which is $(C^r,f,q,\varepsilon)$-admissible up to time $n$, such that
\begin{equation}
\begin{aligned}
\#\Gamma_{F_n}^{c,\mathbf k,\mathbf k'}&\le C_{r,d}^{[\frac{n-c}{q}]+1}\mathrm{exp}(\frac{1}{r-1}\sum_{j=0}^{[\frac{n-c}{q}]+1}(k_{j}-k_{j}')).
\end{aligned}
\end{equation}
By \eqref{3.14} and the definition of $F_{n}^{c,\mathbf k,\mathbf k'}$, one has
\begin{equation*}
\begin{aligned}
\limsup_{n\to+\infty}\frac{1}{n}\log\#\Gamma_{F_n}^{c,\mathbf k,\mathbf k'}&\le\frac{1}{r-1}(\frac{1}{q}\int\log{\left\|D_zf^q\right\|}d\mu(z)-\lambda_1(f,\mu)+\frac{1}{q})+\frac{\log C_{r,d}}{q},
\end{aligned}    
\end{equation*}
uniformly with respect to $F_n\in\mathcal Q^n$. Therefore, according to admissible times $(c(x)+q\mathbb N)\cup\left\{0,n\right\}\cap\left\{0,1,\cdots,n\right\}$, there exists a family $\Gamma_{F_n}$ of reparametrization of $\sigma$ over $\sigma^{-1}(F_n\cap K)$, which is $(C^r,f,q,\varepsilon)$-admissible up to time $n$, such that
\begin{equation*}
\begin{aligned}
\limsup_{n\to+\infty}\frac{1}{n}\log\max_{F_n\in\mathcal Q^n}\#\Gamma_{F_n}&\le\frac{1}{r-1}(\frac{1}{q}\int\log{\left\|D_zf^q\right\|}d\mu(z)-\lambda_1(f,\mu)+\frac{1}{q})+\frac{\log 3qC_{r,d}A_{f}^q}{q}.
\end{aligned}    
\end{equation*}
It implies that
\begin{equation*}
\begin{aligned}
\limsup_{n\to+\infty}C_{n,q}(f)&\le\frac{1}{r-1}(\frac{1}{q}\int\log{\left\|D_zf^q\right\|}d\mu(z)-\lambda_1(f,\mu)+\frac{1}{q})+\frac{\log 3qC_{r,d}A_{f}^q}{q}.
\end{aligned}    
\end{equation*} 
\end{proof}
\subsection{Proof of Proposition A}
\begin{equation*}
\begin{aligned}
h_{\mu}(f)&\le \frac{1}{m}H_{\mu}(\mathcal Q^m)+\limsup_{n\to+\infty}\frac{1}{n}H_{\mu_{x_0,K,\mathrm{Im}\sigma}}(\mathcal P^n|\mathcal Q^n)+\tau\text{\:\color{blue}(Lemma \ref{Lemma 3.1})}\\
&\le\frac{1}{m}H_{\mu}(\mathcal Q^m)+\limsup_{n\to+\infty}C_{n,q}(f)+\log(d+1)+\tau\text{\:\color{blue}(Lemma \ref{Lemma 3.2})}\\
&\le \frac{1}{m}H_{\mu}(\mathcal Q^m)+\frac{d_u}{rq}(\int \log^+\left\|D_zf^q\right\|d\mu(z)+1)+\frac{\log 3qC_{r,d_u,d}}{q} +\log(d+1)+\tau\text{\:\color{blue}(Lemma \ref{Lemma 3.3})}.
\end{aligned}    
\end{equation*}
Until now, Proposition \ref{Proposition A} has been proved by arbitrariness in the choice of $\tau>0$.
\subsection{Proof of Proposition B}
If $\mathrm{dim}E^u_f(x)=1$, for $\mu$-a.e.\:$x\in\mathbf M^d$, one has
\begin{equation*}
\begin{aligned}
h_{\mu}(f)&\le \frac{1}{m}H_{\mu}(\mathcal Q^m)+\limsup_{n\to+\infty}\frac{1}{n}H_{\mu_{x_0,K,\mathrm{Im}\sigma}}(\mathcal P^n|\mathcal Q^n)+\tau\text{\:\color{blue}(Lemma \ref{Lemma 3.1})}\\
&\le\frac{1}{m}H_{\mu}(\mathcal Q^m)+\limsup_{n\to+\infty}C_{n,q}(f)+\log(d+1)+\tau\text{\:\color{blue}(Lemma \ref{Lemma 3.2})}\\
&\le \frac{1}{m}H_{\mu}(\mathcal Q^m)+\frac{1}{r-1}(\frac{1}{q}\int \log\left\|D_zf^q\right\|d\mu(z)-\lambda_1(f,\mu)+\frac{1}{q})+\frac{\log 3qC_{r,d}A_{f}^q}{q} +\log(d+1)+\tau\\
&\text{\:\color{blue}(Lemma \ref{Lemma 3.5})},
\end{aligned}    
\end{equation*}
where $$A_f^q=q(\log^+\left\|Df\right\|_{0}+\log^+\left\|Df^{-1}\right\|_{0}+1).$$
Until now, Proposition \ref{Proposition B} has been proved by arbitrariness in the choice of $\tau>0$.
\section{Proof of Theorems A, B and C}
\subsection{Discretization of the measures}
Given $z\in\mathbf M^d$, for a diffeomorphism $g:\mathbf M^d\to\mathbf M^d$, we denote
$$\mu_{z,g}:=\lim_{n\to+\infty}\frac{1}{n}\sum_{i=0}^{n-1}\delta_{g^iz},$$
if the above limit exists. For any $n\in\mathbb N^+$, by the definition of $d^{u}_{\mathrm{max}}(f_n,\mu_n)$, there exists a Borel set $\Lambda_n\subset O(\mu_n)$ with $\mu_n(\Lambda_n)=1$, satisfying
$$d^{u}_{\mathrm{max}}(f_n,\mu_n)=\max_{z\in\Lambda_n}\mathrm{dim}E^u_{f_n}(z).$$
For $\mu_n\in\mathcal M(\mathbf M^d,f_n)$, we consider the decomposition
$$\mu_n=\sum_{0\le i,j\le d}\beta_n^{i,j}\mu_n^{i,j},\:\beta_n^{i,j}\in [0,1],\sum_{0\le i,j\le d}\beta_n^{i,j}=1$$
such that\\

$\bullet\:\forall0\le i,j\le d$, $\mu_n^{i,j}$ are $f_n$-invariant probability measures;\\

$\bullet\:\forall\:0\le i,j\le d$, for $\mu_n^{i,j}$-a.e.\:$z\in\mathbf M^d$, $\mu_{z,f_n}$ has exactly $i$ positive Lyapunov exponents and $j$ negative  Lyapunov exponents;\\

$\bullet\:\forall\:0\le i,j\le d,\:\mu_{n}(\left\{z\in\Lambda_n:\:\mathrm{dim}E^u_{f_n}(z)=i,\:\mathrm{dim}E^u_{f_n^{-1}}(z)=j\right\})=\beta_{n}^{i,j};\\$

$\bullet\:\forall\:i>d^u_{\max}(f_n,\mu_n)$, one has $\beta_{n}^{i,j}=0$.\\

Without loss of generality, we assume that\\

$\bullet\:\forall\:0\le i,j\le d,\:\lim_{n\to+\infty}\beta^{i,j}_{n}=\beta^{i,j},\:\lim_{n\to+\infty}\mu_{n}^{i,j}=\mu^{i,j};$\\

$\bullet\:\mu=\sum_{0\le i,j\le d}\beta^{i,j}\mu^{i,j},\:\mu^{i,j}\in\mathcal M(\mathbf M^d,f)$, for $0\le i,j\le d$.

\subsection{Ergodic decomposition}
By the ergodic decomposition as in \cite[Lemma 3.2]{LY25-2}, for each $0\le i,j\le d$, for each $n$, there are\\

$\bullet\:$positive numbers $\alpha_{n,1}^{i,j},\cdots,\alpha_{n,N_n^{i,j}}^{i,j}\in [0,1]$ satisfying $\sum_{t=1}^{N_n^{i,j}}\alpha_{n,t}^{i,j}=1;$\\

$\bullet\:f_n$-ergodic measures $\mu_{n,1}^{i,j},\cdots,\mu_{n,N_n^{i,j}}^{i,j};\\$

such that\\

$\bullet\:\lim_{n\to+\infty}\sum_{t=1}^{N_n^{i,j}}\alpha_{n,t}^{i,j}\mu_{n,t}^{i,j}=\mu^{i,j};\\$

$\bullet\:\left|h_{\mu_n^{i,j}}(f_n)-\sum_{t=1}^{N_n^{i,j}}\alpha_{n,t}^{i,j}h_{\mu_{n,t}^{i,j}}(f_n)\right|\le \frac{1}{n};$\\

$\bullet\:\forall\:n\in\mathbb N^+,\:\forall\:0\le i,j\le d,\:\forall1\le t\le N_n^{i,j}$, $\mu_{n,t}^{i,j}$ has exactly $i$ positive Lyapunov exponents and exactly $j$ negative Lyapunov exponents;\\ 

$\bullet\:\left|\lambda_1^+(f_n,\mu^{i,j}_{n})-\sum_{t=1}^{N_n^{i,j}}\alpha^{i,j}_{n,t}\lambda_1^+(f_n,\mu_{n,t}^{i,j})\right|\le \frac{1}{n}$.
\subsection{Proof of Theorem A}
Let $f_{n}\xrightarrow{C^r}f\in \mathrm{Diff}^r(\mathbf M^d)\:(r>1),\:n\to+\infty,$
 which is
 as in Theorem \ref{Theorem A}.
 We assume that the finite partition $\mathcal Q$ of $\mathbf M^d$ satisfies
 $$\mathrm{diam}\mathcal Q<\varepsilon_q(f),$$
 $$\forall\:\nu\in\left\{\mu^{i,j}_{n,t},\mu^{i,j}:0\le i,j\le d,n\in\mathbb N^+,1\le t\le N_n^{i,j}\right\},\:\nu(\partial\mathcal Q)=0.$$ 
By Proposition \ref{Proposition A} and Lemma \ref{Lemma 2.2}, for any $0\le i\le d^u_{\max}(f_n,\mu_n)$, for any $0\le j\le d$, for any $m\in\mathbb N^+$, we have
\begin{equation}
\begin{aligned}
\limsup_{n\to+\infty}h_{\mu_n^{i,j}}(f_n)&=\limsup_{n\to+\infty}\sum_{t=1}^{N_n^{i,j}}\alpha_{n,t}^{i,j}h_{\mu_{n,t}^{i,j}}(f_n)\\
&\le \frac{1}{m}\limsup_{n\to+\infty}\sum_{t=1}^{N^{i,j}_{n}}\alpha^{i,j}_{n,t}H_{\mu_{n,t}^{i,j}}(\mathcal Q^m)\\
&+\lim_{n\to+\infty}\frac{i}{qr}(\int\log^+{\left\|D_zf_n^q\right\|}d(\sum_{1\le t\le N_{n}^{i,j}}\alpha_{n,t}^{i,j}\mu^{i,j}_{n,t})(z)+1)+\frac{\log3qC_{r,d_u,d}}{q}+\log{(d+1)}\\
&\le \frac{1}{m}\lim_{n\to+\infty}H_{\sum_{t=1}^{N_n^{i,j}}\alpha_{n,t}^{i,j}\mu_{n,t}^{i,j}}(\mathcal Q^m)\\
&+\frac{i}{qr}(\int\log^+{\left\|D_zf^q\right\|}d\mu^{i,j}(z)+1)+\frac{\log3qC_{r,d_u,d}}{q}+\log{(d+1)}\\
&\le \frac{1}{m}H_{\mu^{i,j}}(\mathcal Q^m)+\frac{i}{qr}(\int\log^+{\left\|D_zf^q\right\|}d\mu^{i,j}(z)+1)+\frac{\log3qC_{r,d_u,d}}{q}+\log{(d+1)}.
\end{aligned}
\end{equation}
As $m\to+\infty$ and $q\to+\infty$, by the arbitrariness of $\mathcal Q$ and Lemma \ref{Lemma 2.1}, one has
$$\limsup_{n\to+\infty}h_{\mu_n^{i,j}}(f_n)\le h_{\mu^{i,j}}(f)+\frac{i\lambda^+_{1}(f,\mu^{i,j})}{r}+\log{(d+1)}.$$
By the decomposition of $\mu_n$ as in section 4.1, we have 
$$\sum_{0\le i\le d^u_{\mathrm{max}}(f_n,\mu_n),\:0\le j\le d}\beta^{i,j}_{n}h_{\mu_n^{i,j}}(f_n)=h_{\mu_n}(f_n).$$
Therefore,
$$\limsup_{n\to+\infty}h_{\mu_n}(f_n)\le h_{\mu}(f)+\limsup_{n\to+\infty}d^u_{\max}(f_n,\mu_n)\frac{\lambda_{1}^+(f,\mu)}{r}+\log(d+1).$$
For $M\in\mathbb N^+$, replace $f_n \xrightarrow{C^r} f$ by $f_{n}^M\xrightarrow{C^r}f^M$, as $n\to+\infty$, we have
$$\limsup_{n\to+\infty}h_{\mu_n}(f_n)\le h_{\mu}(f)+\limsup_{n\to+\infty}d^u_{\max}(f_n,\mu_n)\frac{\lambda_1^+(f,\mu)}{r}+\frac{\log(d+1)}{M}.$$
Let $M\to+\infty$, we have
$$\limsup_{n\to+\infty}h_{\mu_n}(f_n)\le h_{\mu}(f)+\limsup_{n\to+\infty}d^u_{\max}(f_n,\mu_n)\frac{\lambda_1^+(f,\mu)}{r}.$$

If $\limsup_{n\to+\infty}d^u_{\max}(f_n,\mu_n)=1$, there exists a subsequence $\left\{n_{k}\right\}_{k\in\mathbb N^+}$, such that
for any $k\in\mathbb N^+$, one has $$d^u_{\max}(f_{n_k},\mu_{n_k})=1.$$
By the Ruelle inequality, without loss of generality, we assume that 
$$\limsup_{n\to+\infty}h_{\mu_n}(f_n)=\limsup_{k\to+\infty}h_{\mu_{n_{k}}}(f_{n_k}).$$
Therefore, for any $k\in\mathbb N^+$, for any $i>1$, one has $\beta_{n_k}^{i,j}=0$. If $i=0$, by the Ruelle inequality, for any $0\le j\le d$, one has
$$\limsup_{k\to+\infty}\beta_{n_{k}}^{0,j}h_{\mu_{n_k}^{0,j}}(f_{n_k})\le \beta^{0,j}h_{\mu^{0,j}}(f).$$
If $i=1$, by Proposition \ref{Proposition B} and Lemma \ref{Lemma 2.2}, for any $0\le j\le d$, for any $m\in\mathbb N^+$, we have
\begin{equation}
\begin{aligned}
\limsup_{k\to+\infty}\beta_{n_k}^{1,j}h_{\mu_{n_k}^{1,j}}(f_{n_k})&=\limsup_{k\to+\infty}\beta_{n_k}^{1,j}\sum_{t=1}^{N_{n_k}^{1,j}}\alpha_{n_k,t}^{1,j}h_{\mu_{n_k,t}^{1,j}}(f_{n_k})\\
&\le \frac{1}{m}\limsup_{k\to+\infty}\beta_{n_k}^{1,j}\sum_{t=1}^{N^{1,j}_{n_k}}\alpha^{1,j}_{n_k,t}H_{\mu_{n_k,t}^{1,j}}(\mathcal Q^m)+\frac{\log3qC_{r,d}A_{f}^q}{q}+\log{(d+1)}+\limsup_{k\to+\infty}\\
&\frac{\beta_{n_k}^{1,j}}{r-1}(\frac{1}{q}\int\log{\left\|D_zf_{n_k}^q\right\|}d(\sum_{1\le t\le N_{n_k}^{1,j}}\alpha_{n_k,t}^{1,j}\mu^{1,j}_{n_k,t})(z)-\sum_{1\le t\le N_{n_k}^{1,j}}\alpha_{n_k,t}^{1,j}\lambda_1^+(f_{n_k},\mu_{n_k,t}^{1,j})+\frac{1}{q})\\
&\le\frac{\beta^{1,j}}{m}\lim_{k\to+\infty}H_{\sum_{t=1}^{N_{n_k}^{1,j}}\alpha_{n_k,t}^{1,j}\mu_{n_k,t}^{1,j}}(\mathcal Q^m)+\frac{\log3qC_{r,d}A_{f}^q}{q}+\log{(d+1)}\\
&+\frac{1}{r-1}(\frac{1}{q}\int\log{\left\|D_zf^q\right\|}d\mu^{1,j}(z)-\liminf_{k\to+\infty}\beta_{n_k}^{1,j}\lambda_1^+(f_{n_k},\mu_{n_k}^{1,j})+\frac{1}{q})\\
&\le\frac{\beta^{1,j}}{m}H_{\mu^{1,j}}(\mathcal Q^m)+\frac{1}{r-1}(\frac{1}{q}\int\log{\left\|D_zf^q\right\|}d\mu^{1,j}(z)-\liminf_{k\to+\infty}\beta_{n_k}^{1,j}\lambda_1^+(f_{n_k},\mu_{n_k}^{1,j})+\frac{1}{q})\\
&+\frac{\log3qC_{r,d}A_{f}^q}{q}+\log{(d+1)}.
\end{aligned}
\end{equation}
Summing over $j$ before taking the limit as $k\to+\infty$, we obtain
\begin{equation*}
\begin{aligned}
\limsup_{k\to+\infty}\sum_{0\le j\le d}\beta_{n_k}^{1,j}h_{\mu_{n_k}^{1,j}}(f_{n_k})&\le \frac{1}{m}\sum_{0\le j\le d}\beta^{1,j}H_{\mu^{1,j}}(\mathcal Q^m)+\frac{1}{r-1}(\frac{1}{q}\int\log{\left\|D_zf^q\right\|}d(\sum_{0\le j\le d}\beta^{1,j}\mu^{1,j})(z)\\
&-\liminf_{k\to+\infty}\sum_{0\le j\le d}\beta_{n_k}^{1,j}\lambda_1^+(f_{n_k},\mu_{n_k}^{1,j})+\frac{1}{q})+\frac{\log3qC_{r,d}A_{f}^q}{q}+\log{(d+1)}.     
\end{aligned}    
\end{equation*}
By the decomposition of $\mu_n$ as in section 4.1, for $n\in\mathbb N^+$ large enough, we have 
$$\sum_{i=1,\:0\le j\le d}\beta^{i,j}_{n}h_{\mu_n^{i,j}}(f_n)=h_{\mu_n}(f_n),$$
$$\sum_{i=1,\:0\le j\le d}\beta^{i,j}_{n}\lambda_1^+(f_n,\mu_{n}^{1,j})=\lambda_1^+(f_n,\mu_n).$$
Therefore, as $m\to+\infty$ and $q\to+\infty$, by the arbitrariness of $\mathcal Q$ and Lemma \ref{Lemma 2.1}, one has
$$\limsup_{n\to+\infty}h_{\mu_n}(f_n)\le h_{\mu}(f)+\frac{1}{r-1}(\lambda_{1}^+(f,\mu)-\liminf_{n\to+\infty}\lambda_1^+(f_n,\mu_n))+\log{(d+1)}.$$
For $M\in\mathbb N^+$, replacing $f_n \xrightarrow{C^r} f$ by $f_{n}^M\xrightarrow{C^r}f^M$, as $n\to+\infty$, we have
$$\limsup_{n\to+\infty}h_{\mu_n}(f_n)\le h_{\mu}(f)+\frac{1}{r-1}(\lambda_{1}^+(f,\mu)-\liminf_{n\to+\infty}\lambda_1^+(f_n,\mu_n))+\frac{\log{(d+1)}}{M}.$$
As $M\to+\infty$, we have
$$\limsup_{n\to+\infty}h_{\mu_n}(f_n)\le h_{\mu}(f)+\frac{1}{r-1}(\lambda_{1}^+(f,\mu)-\liminf_{n\to+\infty}\lambda_1^+(f_n,\mu_n)).$$

\subsection{Proof of Corollarys}
\paragraph{\textbf{Proof of Corollary A}}
 For any $n\in\mathbb N^+$, let
$\mu_{n}=\sum_{0\le i,j\le d}\beta^{i,j}_{n}\mu_{n}^{i,j}$ and $\mu=\sum_{0\le i,j\le d}\beta^{i,j}\mu^{i,j}$ satisfy\\

$\bullet\:\forall\:0\le i,j\le d$, $\mu_n^{i,j}$ is an $f_n$-invariant probability measure,\:$\mu^{i,j}$ is an $f$-invariant probability measure;\\

$\bullet\:\forall\:0\le i,j\le d$, for $\mu_n^{i,j}$-a.e.\:$z\in\mathbf M^d$, $\mu_{z,f_n}$ has exactly $i$ positive Lyapunov exponents and $j$ negative  Lyapunov exponents;\\

$\bullet\:\forall\:0\le i,j\le d,\:\beta^{i,j},\beta^{i,j}_{n}\in [0,1],\:\sum_{0\le i,j\le d}\beta^{i,j}_{n}=1,\:\sum_{0\le i,j\le d}\beta^{i,j}=1;\\$

$\bullet\:\forall\:0\le i,j\le d,\:\lim_{n\to+\infty}\beta^{i,j}_{n}=\beta^{i,j},\:\lim_{n\to+\infty}\mu_{n}^{i,j}=\mu^{i,j}.\\$

If $d=2$, one has
\begin{equation}
\begin{aligned}
\limsup_{n\to+\infty}h_{\mu_n}(f_n)&\le \sum_{0\le j\le 2}\beta^{0,j}\limsup_{n\to+\infty}h_{\mu_n^{0,j}}(f_n)+\sum_{1\le i\le 2}\beta^{i,0}\limsup_{n\to+\infty}h_{\mu_n^{i,0}}(f_n)+\beta^{1,1}\limsup_{n\to+\infty}h_{\mu_n^{1,1}}(f_n)\\
&\le \beta^{1,1}\limsup_{n\to+\infty}h_{\mu_n^{1,1}}(f_n)\mathbf{\color{blue}\:(Ruelle\:inequality)}\\
&\le \beta^{1,1}(h_{\mu^{1,1}}(f)+\frac{\min\left\{\lambda^+_{1}(f,\mu^{1,1}),\lambda^+_{1}(f^{-1},\mu^{1,1})\right\}}{r})\mathbf{\color{blue}\:(Theorem\:\ref{Theorem A})}\\
&\le h_{\mu}(f)+\frac{\lambda^+_{\max}(f,\mu)}{r}.
\end{aligned}    
\end{equation}

If $d>2$, it implies that for any $n\in\mathbb N^+$, if $\mu^{i,j}_{n}$ is well-defined, one has $i\le [\frac{d}{2}]$ or $j\le [\frac{d}{2}]$. 
Then we have
\begin{equation}
\begin{aligned}
\limsup_{n\to+\infty}h_{\mu_n}(f_n)&\le \sum_{0\le i,j\le d}\beta^{i,j}\limsup_{n\to+\infty}h_{\mu_{n}^{i,j}}(f_n)\\
&\le \sum_{0\le i,j\le d,\:i\le [\frac{d}{2}]}\beta^{i,j}(h_{\mu^{i,j}}(f)+[\frac{d}{2}]\frac{\lambda^+(f,\mu^{i,j})}{r})\\
&+\sum_{0\le i,j\le d,\:j\le [\frac{d}{2}],\:i>[\frac{d}{2}]}\beta^{i,j}(h_{\mu^{i,j}}(f)+[\frac{d}{2}]\frac{\lambda^+(f^{-1},\mu^{i,j})}{r})\:\mathbf{\color{blue}\:(Theorem\:\ref{Theorem A})}\\
&\le h_{\mu}(f)+[\frac{d}{2}]\frac{\lambda_{\max}^+(f,\mu)}{r},
\end{aligned}
\end{equation}
by considering $f_{n}\xrightarrow{C^r} f,\:\mu_{n}^{i,j}\to\mu^{i,j}\:(n\to+\infty)$ for all $0\le i\le [\frac{d}{2}]$  and $f_{n}^{-1}\xrightarrow{C^r} f^{-1},\:\mu_{n}^{i,j}\to\mu^{i,j}\:(n\to+\infty)$ for all $0\le j\le [\frac{d}{2}]$.

By the variational principle, for any $n\in\mathbb N^+$, there exists $\mu_n\in\mathcal M(\mathbf M^d,f_n)$, such that
$$\limsup_{n\to+\infty}h_{\mathrm{top}}(f_n)=\limsup_{n\to+\infty}h_{\mu_n}(f_n).$$
 By $f_{n}\xrightarrow{C^r}f$, as $n\to+\infty$, we assume that $\mu_n\to\nu\in\mathcal M(\mathbf M^d,f)$, as $n\to+\infty$. Therefore, we have
$$\limsup_{n\to+\infty}h_{\mathrm{top}}(f_n)\le h_{\nu}(f)+[\frac{d}{2}]\frac{\lambda^+_{\max}(f,\nu)}{r}\le h_{\mathrm{top}}(f)+[\frac{d}{2}]\frac{\sup_{\mu\in\mathcal M(\mathbf M^d,f)}\lambda^+_{\max}(f,\mu)}{r}.$$

\paragraph{\textbf{Proof of Corollary B}} 
There is a claim as follows.
\begin{Claim}\label{Claim 4.1}
With the assumptions in Corollary \ref{Corollary B}, for any $\mu\in\mathcal M(\Lambda,f)$, there exist $1\ge \beta\ge 0$ and a decomposition of $f$-invariant measure $\mu=\beta\nu_{1}+(1-\beta)\nu_{0}$, such that\\
$(1)\:h_{\nu_{0}}(f)=0$,\\
$(2)\:$for $\nu_1$-a.e.\:$z\in\mathbf M^d$, there exists exactly one positive Lyapunov exponent.
\end{Claim}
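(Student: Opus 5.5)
We argue via the ergodic decomposition of $\mu$, sorting each ergodic component by the Oseledets data it sees on $\Lambda$. Write $\mu=\int \mu_\xi\,d\hat\mu(\xi)$ with $\mu_\xi\in\mathcal M^{erg}(\mathbf M^d,f)$. Since $\mu(\Lambda)=1$, $\hat\mu$-almost every component satisfies $\mu_\xi(\Lambda)=1$. For such a $\mu_\xi$, fix a $\mu_\xi$-generic point $z\in\Lambda\cap\Gamma_f$. The splitting $E^{cu}\oplus E^{cs}$ is measurable and $Df$-invariant, so by Oseledets' theorem and ergodicity the restricted exponents along $E^{cu}$ and $E^{cs}$ are constant $\mu_\xi$-a.e. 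Because $\dim E^{cu}=1$, the exponent along $E^{cu}(z)$ is the limit $\lim_{n\to\pm\infty}\frac1n\log\|D_zf^n|_{E^{cu}(z)}\|=:\lambda^{cu}$. The exponents along $E^{cs}$ are bounded above by $\limsup_{n\to+\infty}\frac1n\log\|D_zf^n|_{E^{cs}(z)}\|$.

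\textbf{Step 1 (exponents of each component).} We read off the cases from the alternatives in Corollary \ref{Corollary B}.
\begin{enumerate}[(a)]
\item In alternative (1), all exponents along $E^{cs}$ are negative and $\lambda^{cu}\ge 0$. So there is exactly one positive exponent if $\lambda^{cu}>0$, and none if $\lambda^{cu}=0$.
\item In alternative (2), $\lambda^{cu}>0$ and all exponents along $E^{cs}$ are $\le 0$. So there is exactly one positive exponent.
\end{enumerate}
A single component may a priori contain points of both alternatives. This causes no trouble, because the conclusion only depends on the a.e.-constant exponents: every $\mu_\xi$ has either exactly one positive exponent or no positive exponent. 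Measurability of the set of components of each type follows from the measurability of the Lyapunov exponents in $\xi$.

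\textbf{Step 2 (the decomposition).} Let $A=\{\xi:\mu_\xi$ has exactly one positive exponent$\}$ and set $\beta=\hat\mu(A)$. Define $\nu_1=\beta^{-1}\int_A\mu_\xi\,d\hat\mu$ and $\nu_0=(1-\beta)^{-1}\int_{A^c}\mu_\xi\,d\hat\mu$, with the obvious convention when $\beta\in\{0,1\}$. Both are $f$-invariant and $\mu=\beta\nu_1+(1-\beta)\nu_0$. Property (2) of the Claim holds by construction. For (1), every component in $A^c$ has no positive Lyapunov exponent, so the Ruelle inequality gives $h_{\mu_\xi}(f)=0$. By affinity of entropy over the ergodic decomposition, $h_{\nu_0}(f)=\int_{A^c}h_{\mu_\xi}\,d\hat\mu/(1-\beta)=0$.

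\textbf{Main obstacle.} The delicate point is the passage in Step 1 from pointwise liminf/limsup growth conditions along the measurable bundles to statements about the Oseledets spectrum. Two things need checking. First, the $E^{cs}$ condition bounds the top exponent inside $E^{cs}$, using $\|D_zf^n|_{E^{cs}}\|\ge\|D_zf^nv\|/\|v\|$ for $v\in E^{cs}(z)$. Second, the Oseledets spectrum is the union of the spectra on the two invariant summands, which holds since the splitting is invariant, though not necessarily dominated. Continuity of $E^{cu}$ is not needed for the Claim itself; it will be used later when applying \eqref{1.2} of Theorem \ref{Theorem A}.
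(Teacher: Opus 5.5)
Your proof is correct and is essentially the paper's argument. The paper gets the same decomposition, up to null sets, more directly: it restricts $\mu$ to the $f$-invariant set $\Lambda_1=\{z\in\Lambda:\liminf_{n\to\pm\infty}\frac{1}{n}\log\|D_zf^n|_{E^{cu}(z)}\|>0\}$ and to its complement, then applies the Ruelle inequality, which holds for non-ergodic measures, directly to $\nu_0=\mu|_{\Lambda\setminus\Lambda_1}$. This avoids the ergodic decomposition and affinity of entropy, while your Step 1 spells out the Oseledets bookkeeping that the paper leaves implicit.
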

\begin{proof}
Let $$\Lambda_1=\left\{z\in\Lambda:\liminf_{n\to\pm\infty}\frac{1}{n}\log\left\|D_zf^n|_{E^{cu}(z)}\right\|>0\right\},$$
$$\beta=\mu(\Lambda_1).$$
Then there exists $\nu_0:=\mu|_{\Lambda\setminus\Lambda_1},\nu_1:=\mu|_{\Lambda_1}\in\mathcal M(\Lambda,f)$, such that $\mu=\beta\nu_1+(1-\beta)\nu_0$. By the Oseledets theorem and the Ruelle inequality, $h_{\nu_0}(f)=0$. Moreover, for $\nu_1$-a.e.\:$z\in\mathbf M^d$, there exists exactly one positive Lyapunov exponent. 

\end{proof}
Without loss of generality, we assume that for any $n\in\mathbb N^+$, $h_{\mu_n}(f)>0$. For any $n\in\mathbb N^+$, let
$\mu_{n}=\beta^{0}_{n}\mu_{n}^{0}+\beta^{1}_{n}\mu_{n}^{1}$ and $\mu=\beta^{0}\mu^{0}+\beta^{1}\mu^{1}$ satisfy\\

$\bullet\:$$\mu_n^{i},\mu^i\in\mathcal M(\Lambda,f)$ for $i=0,1$;\\

$\bullet\:h_{\mu_{n}^0}(f)=0;\\$

$\bullet\:$for $\mu_n^{1}$-a.e.\:$z\in\mathbf M^d$, $\mu_{z,f}$ has exactly one positive Lyapunov exponent and $\mu_n^1(\Lambda_1)=1$;\\

$\bullet\:\beta^{i}_{n},\beta^{i}\in [0,1],\:\beta^{0}_{n}=1-\beta^1_n,\:\beta^0=1-\beta^1$ for $i=0,1$;\\

$\bullet\:\lim_{n\to+\infty}\beta^{i}_{n}=\beta^{i},\:\lim_{n\to+\infty}\mu_{n}^{i}=\mu^{i}$ for $i=0,1$.\\

By the uniqueness of the Oseledets splitting, for any $\nu\in\mathcal M(\Lambda,f)$, for $\nu$-a.e.\:$z\in\Lambda$, one has $E^{cu}(z)=E^1_{f}(z)$ and for $\nu$-a.e.\:$z\in\Lambda_1$, one has $E^{cu}(z)=E^1_{f}(z)=E^u_{f}(z)$. By the continuity of $z\mapsto E^{cu}(z)$, $z\mapsto \log\left\|D_zf|_{E^{cu}(z)}\right\|$ is continuous on $\Lambda$. By the Birkhoff ergodic theorem and the boundedness of $z\mapsto \log\left\|D_zf|_{E^{cu}(z)}\right\|$ on $\Lambda_1$, one has 
$$\lim_{n\to+\infty}\lambda_{1}(f,\mu_n^1)=\lambda_1(f,\mu^1).$$
For $\mu^1\in\mathcal M(\Lambda,f)$, one has $\lambda_1(f,\mu^1)=\lambda_1^+(f,\mu^1)$. And therefore,
$$\lim_{n\to+\infty}\lambda_{1}^+(f,\mu_n^1)=\lambda_1^+(f,\mu^1).$$
By \eqref{1.2}, we have
$$\limsup_{n\to+\infty}h_{\mu^{1}_n}(f)\le h_{\mu^1}(f).$$
By Claim \ref{Claim 4.1}, one has
$$\limsup_{n\to+\infty}h_{\mu_n}(f)\le h_{\mu}(f).$$
\paragraph{\textbf{Proof of Corollary C}}There is a claim as follows.
\begin{Claim}\label{Claim 4.2}
With the assumptions of Corollary \ref{Corollary C}, assume that there is a sequence $\left\{\mu_n\right\}_{n\in\mathbb N^+}$ of $f$-invariant measures converging to an $f$-invariant measure $\mu$, satisfying
$$\lambda^+_{\Sigma}(f,\mu_n)\to\lambda^+_{\Sigma}(f,\mu),\:\mathrm{as}\:n\to+\infty,$$
where $\lambda^+_{\Sigma}(f,\mu)=\int\sum_{\lambda_i(f,x)>0}\lambda_i(f,x)\mathrm{dim}E^i_f(x)d\mu(x)$. Then, one has
\begin{equation*}
\limsup_{n\to+\infty}h_{\mu_n}(f_n)\le h_{\mu}(f).
\end{equation*}    
\end{Claim}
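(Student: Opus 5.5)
The plan is to reduce the claim to \eqref{1.2} of Theorem \ref{Theorem A}, after splitting each measure according to whether it has one positive or one negative exponent. Here $f_n=f$, so the claim should read $\limsup_{n\to+\infty}h_{\mu_n}(f)\le h_\mu(f)$. We may assume $\mu_n\in\mathcal M(\mathbf M^d,f)$.

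\textbf{Step 1 (splitting).} Decompose each $\mu_n$ ergodically into three pieces:
\begin{itemize}
\item $\mu_n^{+}$, supported on the points with $\dim E^u_f=1$;
\item $\mu_n^{-}$, supported on the points with $\dim E^u_f\neq 1$ but $\dim E^u_{f^{-1}}\le 1$;
\item $\mu_n^{0}$, the remainder.
\end{itemize}
By hypothesis, every ergodic component has at most one positive or at most one negative exponent. Components with zero positive exponents have zero entropy by the Ruelle inequality, and the same holds for zero negative exponents (Ruelle inequality applied to $f^{-1}$). So $\mu_n^{0}$ contributes no entropy. Write $\mu_n=\beta_n^+\mu_n^++\beta_n^-\mu_n^-+\beta_n^0\mu_n^0$. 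Passing to a subsequence, all weights and component measures converge, to $\beta^\pm,\beta^0$ and $\mu^\pm,\mu^0$, with $\mu=\beta^+\mu^++\beta^-\mu^-+\beta^0\mu^0$.

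\textbf{Step 2 (apply \eqref{1.2}).} For the $+$ part we have $d^u_{\max}(f,\mu_n^+)=1$. Theorem \ref{Theorem A} with $f_n\equiv f$ then gives
\[
\limsup_{n\to+\infty} h_{\mu_n^+}(f)\le h_{\mu^+}(f)+\frac{1}{r-1}\Bigl(\lambda_1^+(f,\mu^+)-\liminf_{n\to+\infty}\lambda_1^+(f,\mu_n^+)\Bigr).
\]
For the $-$ part we apply the same bound to $f^{-1}$, using $h_\nu(f^{-1})=h_\nu(f)$. It remains to show that $\lambda_1^+(f,\mu_n^\pm)\to\lambda_1^+(f,\mu^\pm)$, which makes both correction terms vanish. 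Summing with the weights then gives the claim.

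\textbf{Step 3 (main obstacle).} The difficulty is deducing convergence of the top exponent of each piece from the hypothesis $\lambda^+_\Sigma(f,\mu_n)\to\lambda^+_\Sigma(f,\mu)$.
\begin{itemize}
\item On the $+$ pieces, $\lambda^+_\Sigma=\lambda_1^+$.
\item The map $\nu\mapsto\lambda_1^+(f,\nu)=\lim_k\frac1k\int\log^+\|Df^k\|\,d\nu$ is an infimum of continuous functions, hence upper semicontinuous.
\item The map $\nu\mapsto\lambda^+_\Sigma(f,\nu)=\inf_k\frac1k\int\log^+\|\wedge Df^k\|\,d\nu$ is likewise upper semicontinuous, and it is affine.
\end{itemize}
Since $\lambda^+_\Sigma(f,\mu_n)$ is the weighted sum of $\lambda^+_\Sigma$ over the three pieces, each piece is upper semicontinuous, and the total converges, each weighted piece must converge. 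In particular $\beta_n^+\lambda_1^+(f,\mu_n^+)\to\beta^+\lambda^+_\Sigma(f,\mu^+)\ge\beta^+\lambda_1^+(f,\mu^+)$, which is the needed $\liminf$ bound for the $+$ piece.

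For the $-$ piece we need convergence of $\lambda_1^+(f^{-1},\cdot)$. When there is one negative exponent, $\lambda_1^+(f^{-1},\nu)$ equals $\lambda^+_\Sigma(f,\nu)-\int\log|\det Df|\,d\nu$. The determinant term is continuous in $\nu$, so upper semicontinuity of $\lambda_1^+(f^{-1},\cdot)$ together with convergence of $\lambda^+_\Sigma$ yields the lower bound again.

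One subtlety remains: $\mu^+$ may acquire components with more than one positive exponent. There the inequality $\lambda^+_\Sigma\ge\lambda_1^+$ still goes in the right direction, so it does no harm. If it does cause trouble, I would instead argue through Ruelle together with the $\lambda^+_\Sigma$-convergence, using $h\le\lambda_\Sigma^+$ to absorb the defect.
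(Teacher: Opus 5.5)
Your proposal is correct and follows essentially the same route as the paper: discard the zero-entropy part by Ruelle's inequality, use upper semicontinuity and affinity of $\lambda^+_\Sigma$ (transferred to $f^{-1}$ through $\int\log|\det Df|$) to force convergence of each weighted piece, and then apply \eqref{1.2} to $f$ and $f^{-1}$. The differences are cosmetic. You use a disjoint three-piece split. You use only the one-sided bound $\lambda^+_\Sigma\ge\lambda^+_1$ at the limit, whereas the paper also invokes upper semicontinuity of $\lambda^+_1$ to get full convergence. For the $-$ piece, the lower bound actually comes from the identity $\lambda^+_\Sigma(f^{-1},\nu)=\lambda^+_\Sigma(f,\nu)-\int\log|\det Df|\,d\nu$, which holds for every invariant $\nu$, and not from upper semicontinuity of $\lambda^+_1(f^{-1},\cdot)$.
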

\begin{proof}
For any $n\in\mathbb N^+$, let
$\mu_{n}=\sum_{i=\pm,j=0,1}\beta^{i,j}_{n}\mu_{n}^{i,j}$ and $\mu=\sum_{i=\pm,j=0,1}\beta^{i,j}\mu^{i,j}$ satisfy\\
$\bullet\:$$\mu_n^{i,j},\mu^{i,j}$ are $f$-invariant probability measures for $i=\pm,\:j=0,1$;\\
$\bullet\:$for $\mu_n^{+,j}$-a.e.\:$z\in\mathbf M^d$, $\mu_{z,f}$ has exactly $j$ positive Lyapunov exponent, for $j=0,1$;\\
$\bullet\:$for $\mu_n^{-,j}$-a.e.\:$z\in\mathbf M^d$, $\mu_{z,f}$ has exactly $j$ negative Lyapunov exponent, for $j=0,1$;\\
$\bullet\:\beta^{i,j}_{n},\beta^{i,j}\in [0,1],\:\sum_{i=\pm,j=0,1}\beta_{n}^{i,j}=1,\:\sum_{i=\pm,j=0,1}\beta_{n}^{i,j}=1$ for $i=\pm,\:j=0,1$;\\
$\bullet\:\lim_{n\to+\infty}\beta^{i,j}_{n}=\beta^{i,j},\:\lim_{n\to+\infty}\mu_{n}^{i,j}=\mu^{i,j}$ for $i=\pm,\:j=0,1$.

By the Ruelle inequality, one has
$$\limsup_{n\to+\infty}h_{\mu_n^{i,0}}(f_n)\le h_{\mu^{i,0}}(f),\:i=\pm.$$
By the continuity of $x\mapsto \log\left|\mathrm{det}(D_xf)\right|$ and the Birkhoff ergodic theorem, if $\mu_n\to\mu,\:\lambda^+_{\Sigma}(f,\mu_n)\to\lambda^+_{\Sigma}(f,\mu)$, as $n\to+\infty$, one has $\lambda^+_{\Sigma}(f^{-1},\mu_n)\to\lambda^+_{\Sigma}(f^{-1},\mu)$, as $n\to+\infty$. We give a formula for $\lambda^+_{\Sigma}(f,\mu)$
$$\lambda^+_{\Sigma}(f,\mu)=\lim_{n\to+\infty}\frac{1}{n}\int\max_{1\le k\le d}\log^+\left\|\wedge^kD_xf^n\right\|d\mu(x).$$
It is clear that $\phi_n(x)=\max_{1\le k\le d}\log^+\left\|\wedge^kD_xf^n\right\|$ is continuous on $\mathbf M^d$ and $\left\{\phi_n\right\}_{n\in\mathbb N}$ is a sequence of sub-additive functions. By Kingman's sub-additive ergodic theorem, one has that the map $\mu\mapsto \lambda^+_{\Sigma}(f,\mu)$ is upper semicontinuous.

If $\beta^{i,1}=0$, it is clear that 
$$\limsup_{n\to+\infty}\beta_{n}^{i,1}h_{\mu_n^{i,1}}(f)\le \beta^{i,1}h_{\mu^{i,1}}(f).$$
Therefore, without loss of generality, we assume that $\beta^{i,1}>0$, for $i=\pm$.
Moreover, we assume that $\lim_{n\to+\infty}\lambda^+_{\Sigma}(f,\mu_n^{i,j})$ exists, for all $i=\pm,\:j=0,1.$ Thus
\begin{equation*}
\begin{aligned}
\lambda^+_{\Sigma}(f,\mu)&=\lim_{n\to+\infty}\lambda^+_{\Sigma}(f,\mu_n)\\
&=\lim_{n\to+\infty}\sum_{i=\pm,j=0,1}\beta^{i,j}_{n}\lambda^+_{\Sigma}(f,\mu_n^{i,j})\\
&\le \sum_{i=\pm,j=0,1}\beta^{i,j}\lim_{n\to+\infty}\lambda^+_{\Sigma}(f,\mu_n^{i,j})\\
&\le \sum_{i=\pm,j=0,1}\beta^{i,j}\lambda^+_{\Sigma}(f,\mu^{i,j})=\lambda^+_{\Sigma}(f,\mu). 
\end{aligned}    
\end{equation*}
It implies that the equality must hold, and then we have
$$\lim_{n\to+\infty}\lambda^+_{\Sigma}(f,\mu_n^{+,1})=\lambda^+_{\Sigma}(f,\mu^{+,1}).$$
Therefore
\begin{equation*}
\begin{aligned}
\lambda_{\Sigma}^+(f,\mu^{+,1})=\lim_{n\to+\infty}\lambda^+_{\Sigma}(f,\mu_n^{+,1})=\lim_{n\to+\infty}\lambda_1^+(f,\mu_n^{+,1})\le \lambda_1^+(f,\mu^{+,1})\le \lambda^+_{\Sigma}(f,\mu^{+,1}),    
\end{aligned}    
\end{equation*}
which implies that
$$\lim_{n\to+\infty}\lambda_1^{+}(f,\mu_n^{+,1})=\lambda_1^+(f,\mu^{+,1}).$$
Similarly, 
$$\lim_{n\to+\infty}\lambda_1^{+}(f^{-1},\mu_n^{-,1})=\lambda_1^+(f^{-1},\mu^{-,1}).$$
By the definitions of $\mu_{n}^{i,j},\:i=\pm,\:j=0,1$ and \eqref{1.2} of Theorem \ref{Theorem A}, one has
$$\limsup_{n\to+\infty}\beta_{n}^{i,1}h_{\mu_n^{i,1}}(f)\le \beta^{i,1}h_{\mu^{i,1}}(f),\:i=\pm.$$
Above all,
$$\limsup_{n\to+\infty}h_{\mu_n}(f)\le h_{\mu}(f).$$
\end{proof}

By the upper semicontinuity of the map $\mu\mapsto \lambda^+_{\Sigma}(f,\mu)$, there exists a dense $G_{\delta}$ subset $\mathcal M$ of $\mathcal M(\mathbf M^d,f)$, such that for any $\nu\in\mathcal M$, the map $\mu\mapsto \lambda^+_{\Sigma}(f,\mu)$ is continuous at $\nu$, which implies that the entropy map $\mu\mapsto h_{\mu}(f)$ is upper semicontinuous at $\nu$ by Claim \ref{Claim 4.2}.
\subsection{Proof of Theorem B} 
The idea of the proof in this section is drawn from \cite[Theorem 6.1]{DN05} and \cite{LSW15}. For the sake of completeness, we outline the proof below.
Let \(\Lambda_f\) denote the wild hyperbolic set associated with
\(f\in \mathcal N\). Let \(\mathcal H_*(f)\) be the set of all hyperbolic
periodic points that are homoclinically related to \(\Lambda_f\). For $p\in\mathcal H_*(f)$, denote $\chi(p)=\frac{1}{\mathrm{per}(p)}\min\left\{\log \left|\chi_s^{-1}(p)\right|,\log \left|\chi_u(p)\right|\right\}>0$, where $\left|\chi_s(p)\right|<1$ and $\left|\chi_u(p)\right|>1$ are the norms of the two eigenvalues of $D_pf^{\mathrm{per}(p)}$ respectively, and $\mathrm{per}(p)$ the least period of $p$. 
Given \(f\in \mathcal N\) and \(p\in \mathcal H_*(f)\), let
\(\mathcal U_p(f)\) be a neighborhood of \(f\) such that, for every
\(g\in \mathcal U_p(f)\), the continuation \(p_g\) of \(p\) is well defined and
the continuation \(\Lambda_f(g)\) of \(\Lambda_f\) is a wild hyperbolic set for
\(g\). We say that a diffeomorphism \(g\in \mathcal N\) admits a basic set with large entropy near $\delta_{p_g}^{\mathrm{per}(p_g)}$ at scale $\frac{1}{n}$,  if there exists a basic set \(\Lambda(p_g,n)\) for \(g\)
such that the following conditions hold.
\begin{enumerate}
    \item There exists an ergodic measure
    \[
        \nu\in \mathcal M^{erg}\bigl(\Lambda(p_g,n),g\bigr)
    \]
    such that
    \[
        h_\nu(g)>\frac{\chi(p_g)}{r}-\frac{1}{n}.
    \]

    \item For every ergodic measure
    \[
        \mu\in \mathcal M^{erg}\bigl(\Lambda(p_g,n),g\bigr),
    \]
    one has
    \[
        \rho\bigl(\mu,\delta_{p_g}^{\mathrm{per}(p_g)}\bigr)<\frac{1}{n},
    \]
    where $\delta_{p_g}^{\mathrm{per}(p_g)}$ denotes the periodic measure induced by $p_g$.
\end{enumerate}
Let \(\mathcal S_n(p)\subset \mathcal U_p(f)\) denote the set of all
diffeomorphisms \(g\in \mathcal U_p(f)\) for which there exists a basic set with large entropy near $\delta_{p_g}^{\mathrm{per}(p_g)}$ at scale $\frac{1}{n}$.  We aim to prove that,
\[
    \bigcup_{m\in\mathbb N^+}\bigcap_{n\in\mathbb N^+}\mathcal S_n(p_{f_m})
\]
is \(C^r\)-residual in \(\mathcal N\), where $\left\{f_m\right\}_{m\in\mathbb N}$ is a countable dense subset of $\mathcal N$. It suffices to prove that, for any \(p\in \mathcal H_*(f)\), the set
\(\mathcal S_n(p)\) is \(C^r\)-open and dense in \(\mathcal U_p(f)\), by gluing a local residual subsets to construct a global residual set on $\mathcal N$. It is clear that \(\mathcal S_n(p)\) is \(C^r\)-open. Thus, it
remains only to prove \(C^r\)-density. More precisely, we shall show that, for
any \(f\in \mathcal N\), and any \(p\in \mathcal H_*(f)\), there exists
\(g\in \mathcal N\) arbitrarily \(C^r\)-close to \(f\) such that, there exists a basic set with large entropy near $\delta_{p_g}^{\mathrm{per}(p_g)}$ at scale $\frac{1}{n}$. In what follows, we assume that \(p\) is dissipative and
nonresonant. For simplicity of exposition, we also assume that \(p\) is a fixed
point.

We say that a hyperbolic periodic point \(p\) has an interval of homoclinic
tangencies if there exists a continuous map
\[
    \gamma:[0,1]\to W^s(f,p)\cap W^u(f,p)
\]
such that, for any \(t\in[0,1]\), \(\gamma(t)\) is a homoclinic tangency. Then, as in \cite{DN05}, there is a basic fact as follows. For every
hyperbolic periodic point \(p\) homoclinically related to the wild hyperbolic
set \(\Lambda_f\), there exists \(g\in \mathcal N\) arbitrarily \(C^r\)-close to
\(f\) such that the continuation \(p_g\) has an interval of homoclinic
tangencies.

By a small \(C^r\)-perturbation, we may assume that, there exists an interval
\[
    I\subset W^s_{loc}(f,p)
\]
consisting entirely of homoclinic tangencies between \(W^u(f,p)\) and
\(W^s(f,p)\). We may further assume that \(I\) is contained in \(U(p)\), where $U(p)$ denotes the neighborhood of $p$ with a sufficiently small diameter that has a local product structure with respect to $f$. We choose $C^r$-linearizing coordinates near \(p\) on $U(p)$ such that the local stable
leaves are horizontal and the local unstable leaves are vertical. Let $z_1=(a_1,0),\:z_2=(a_2,0)$
be the endpoints of \(I\), with \(0<a_1<a_2\). Given \(\varepsilon>0\) and a
sufficiently large integer \(N\in \mathbb N\), set
\[
    A(N)=\frac{\varepsilon(a_2-a_1)}{N^r},
    \:
    c=\frac{a_1+a_2}{2}.
\]

We now take a small $C^r$-perturbation $g_{N,\varepsilon}$
of \(f\), together with an interval
\[
    \widetilde I\subset W^u_{loc}(g_{N,\varepsilon},p),
\]
such that the following properties hold:
\begin{enumerate}
    \item \(p\) remains a hyperbolic fixed point of \(g_{N,\varepsilon}\);

    \item $g_{N,\varepsilon}=f
        \:\text{on}\:
        U(p)\cap f^{-1}(U(p))\cap f(U(p));$

    \item\:$W^{i}_{loc}(g_{N,\varepsilon},p)
        =
        W^{i}_{loc}(f,p)$, for $i=u$ or $s$;

    \item $I\subset W^s_{loc}(g_{N,\varepsilon},p);$

    \item There exists a positive integer \(N_*\) such that
    \[
        J_{N,\varepsilon}
        :=
        g_{N,\varepsilon}^{N_*}(\widetilde I)
    \]
    is given by
    \[
        J_{N,\varepsilon}
        =
        \left\{
        (x,y)\in U(p):
        x\in [a_1,a_2],\;
        y=A(N)\cos\left(
            \frac{\pi N(x-c)}{a_2-a_1}
        \right)
        \right\}.
    \]
\end{enumerate}
Moreover, one can choose the perturbation so that there exists a constant
\( K>0\), independent of \(N\) and \(\varepsilon\), satisfying
\[
    \|g_{N,\varepsilon}-f\|_{r}
    \le
     K\varepsilon .
\]
For each positive integer \(k\), define
\[
    U_k^u:=\bigcap_{j=0}^{k} f^j(U(p)),
    \qquad
    U_k^s:=\bigcap_{j=0}^{k} f^{-j}(U(p)).
\]
We shall only consider sufficiently large integers \(N\). Recall that, by the
construction above, the intersection \(I\cap J\) consists of precisely \(N\)
points. By \cite[p. 155]{PT93} and \cite[Section 5, p.481-483]{DN05}, We can choose a positive integer \(k=k(N)\) and a curvilinear rectangle
\(D_N\) in a small neighborhood of \(I\) with the following properties:
\begin{enumerate}
    \item The boundary \(\partial D_N\) consists of curves contained in leaves
    of foliations \(\mathcal F^s\) and \(\mathcal F^u\);

    \item $D_N\subset U_k^s\setminus U_{k+1}^s$;

    \item
        $g^k_{N,\varepsilon}(D_N)\subset U_k^u\setminus U_{k+1}^u;$

    \item The intersection $D_N\cap g^{N_*+k}_{N,\varepsilon}(D_N)$
    consists of \(N\) full-height curvilinear subrectangles of \(D_N\);

    \item Let
    \[
        \widetilde\Lambda_N
        :=
        \bigcap_{i\in\mathbb Z}
        g^{i(N_*+k)}_{N.\varepsilon}(D_N)
    \]
    be the maximal \(g^{N_*+k}_{N,\varepsilon}\)-invariant set contained in \(D_N\). Then
    \(\widetilde\Lambda_N\) is a basic set, and $\bigl(\widetilde\Lambda_N, g^{N_*+k}_{N,\varepsilon}\bigr)$    is topologically conjugate to the full shift on \(N\) symbols;

    \item Define
    \[
        \Lambda(p,N)
        :=
        \bigcup_{j=0}^{N_*+k-1} g^j_{N,\varepsilon}(\widetilde\Lambda_N).
    \]
    Then, for all sufficiently large \(N\), depending on \(n\), the set
    \(\Lambda(p,N)\) is a hyperbolic set for \(g_{N,\varepsilon}\);

    \item $\mathrm{dist}(\mathcal F^s(g^{-k}_{N,\varepsilon}z),W^s_{loc}(g_{N,\varepsilon},p)))\le A(N)\sim e^{-\chi(p)k},$ where $z=(0,y)$ be the right endpoints of $\tilde I=g^{-N_*}_{N,\varepsilon}(J)$.
\end{enumerate}

Since $\bigl(\widetilde\Lambda_N,g_{N,\varepsilon}^{N_*+k}\bigr)$
is topologically conjugate to the full shift $(\Sigma_N,\sigma)$
on \(N\) symbols, the entropy of the \(g_{N,\varepsilon}\)-invariant set \(\Lambda(p,N)\) is
given by
\[
    h_{\mathrm{top}}\bigl(g_{N,\varepsilon},\Lambda(p,N)\bigr)
    =
    \frac{\log N}{N_*+k} .
\]
We now estimate the asymptotic size of \(k=k(N)\). By the basic property of $U(p)$ and the definition of \(A(N)\), one has $N^{-r}\sim A(N)\sim e^{-\chi (p)k(N)}.$
Thus $N\sim e^{\chi (p)k(N)/r}.$
It follows that
\[
    \lim_{N\to+\infty}
    h_{\mathrm{top}}\bigl(g_{N,\varepsilon},\Lambda(p,N)\bigr)
    =
    \lim_{N\to+\infty}
    \frac{\log N}{N_*+k(N)}
    =
    \frac{\chi(p)}{r}.
\]
Since \(\Lambda(p,N)\) admits an ergodic
measure of maximal entropy with respect to $g_{N,\varepsilon}$. Consequently, for every sufficiently large \(N\),
there exists
\[
    \nu_N\in
    \mathcal M^{erg}\bigl(\Lambda(p,N),g_{N,\varepsilon}\bigr)
\]
such that
\[
    h_{\nu_N}(g_{N,\varepsilon})>
    \frac{\chi(p)}{r}-\frac{1}{n}.
\]
Moreover, by the construction of \(D_N\), for every \(0\le i\le k(N)\),
\[
    g_{N,\varepsilon}^i(\widetilde\Lambda_N)\subset U(p).
\]
Since \(k(N)\to+\infty\) as \(N\to+\infty\), whereas \(N_*\) is independent of
\(N\), the orbit of any point in \(\Lambda(p,N)\) spends an asymptotically
dominant proportion of its time inside \(U(p)\). Hence, taking \(N\) sufficiently
large, we obtain that for every \(z\in \Lambda(p,N)\),
\[
    \rho\bigl(\delta_p,\frac{1}{m}\sum_{i=0}^{m-1}\delta_{g_{N,\varepsilon}^iz}\bigr)<\frac{1}{2n}
\]
for all sufficiently large \(m\). In particular, this estimate holds for every
ergodic measure
\[
    \mu\in \mathcal M^{erg}\bigl(\Lambda(p,N),g_{N,\varepsilon}\bigr)
\]
and every generic point \(z\in G_\mu\).

Define
\[
    \mathcal R:=
    \bigcup_{m\in\mathbb N^+}\bigcap_{n\in\mathbb N^+}\mathcal S_n(p_{f_m})
,
\]
then \(\mathcal R\) is residual in \(\mathcal N\). Finally, for $f\in\mathcal R$, there exists $s\in\mathbb N^+$, such that
\[
    f\in\bigcap_{n\in\mathbb N^+}\mathcal S_n(p_{f_s}),\:p_{f_s}\in\mathcal H_{*}(f_s).
\]
Therefore, for each \(n\ge \operatorname{Per}(p_{f_s})\), there exists an ergodic
measure
\[
    \mu_n\in
    \mathcal M^{erg}\bigl(\Lambda(p_{f_s},n),f\bigr)
    \subset
    \mathcal M^{erg}(\mathbf M^2,f)
\]
such that
\[
    \liminf_{n\to+\infty} h_{\mu_n}(f)
    \ge
    \frac{\chi(p_{f_s})}{r}
    >0,
\]
$$\mu_n\to\delta_{p_{f_s}}^{\mathrm{per}(p_{f_s})},\:\mathrm{as}\:n\to+\infty.$$
By Corollary \ref{Corollary A}, Theorem \ref{Corollary E} has been proved up to now.
\subsection{Proof of Theorem C}
 Let  $D=\left\{(x,y)\in\mathbb R^2:x^2+y^2\le 4\right\}$.
Firstly, we state the result in \cite{B14}.
\begin{theorem}\cite[Buz14, Theorem 1]{B14}\label{Theorem 4.1}
 There exists $f\in\mathrm{Diff}^{\infty}(D)$ with $h_{\mathrm{top}}(f)=0$ and the following properties.
For any $1<r<+\infty$ and any neighborhood $U_0$ of $f$ in $\mathrm{Diff}^r(D)$, there exists $f_0\in U_0$ such that:\\
(1)\:$h_{\mathrm{top}}(f_0)=\frac{\lambda^+(f)}{r}>0$,\\
(2)\:$\sup_{\mu\in\mathcal M(D,f_0)}\lambda_1(f_0,\mu)=\frac{\lambda^+(f)}{r}$  and this supremum is not achieved,\\
(3)\:$f_0$ has no measure of maximal entropy.    
\end{theorem}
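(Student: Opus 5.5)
The statement is Buzzi's theorem \cite[Theorem 1]{B14}, so in the paper it is quoted rather than reproved. If I were to prove it, I would follow the Newhouse-type construction already used above in the proof of Theorem \ref{Corollary E}, with a convergent sequence of horseshoes instead of a single one.

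\textbf{Choice of $f$.} I would take $f\in\mathrm{Diff}^\infty(D)$ to be a time-one-type map with $h_{\mathrm{top}}(f)=0$. It should have a hyperbolic saddle fixed point $p$ whose expansion rate realises $\lambda^+(f)=\log|\chi_u(p)|$, and a ``tangency region'' where $W^u(f,p)$ and $W^s(f,p)$ coincide along an interval. Concretely, the non-wandering set is reduced to finitely many fixed points plus a saddle connection, which gives $h_{\mathrm{top}}(f)=0$. I would arrange the derivative so that $\|Df^n\|_0$ grows exactly like $e^{n\lambda^+(f)}$, coming only from the dynamics near $p$. This is the key normalisation: any later perturbation supported away from $p$ cannot increase $\lambda^+$.

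\textbf{Construction of $f_0$.} Given $r$ and $U_0$, I would perform countably many disjointly supported $C^r$ perturbations in the connection region. These are localised at scales accumulating on the connection. The $N$-th bump is the cosine snake of amplitude $A(N)\sim \varepsilon_N N^{-r}$ used in the proof of Theorem \ref{Corollary E}. It has $C^r$ size $\lesssim\varepsilon_N$ with $\sum_N\varepsilon_N$ small, so $f_0\in U_0$ and $f_0$ stays $C^r$; it is not $C^{r+\delta}$, as expected. Each bump creates a horseshoe $\Lambda_N$ on which the return map after $N_*+k(N)$ iterates, with $e^{-\lambda^+ k(N)}\sim A(N)$, is conjugate to a full shift on about $N$ symbols. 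This gives
\[
h_{\mathrm{top}}(f_0,\Lambda_N)=\frac{\log N}{N_*+k(N)}\nearrow\frac{\lambda^+(f)}{r}.
\]
I would choose $\varepsilon_N\to0$ slowly, so that every term stays strictly below $\lambda^+(f)/r$ while the limit is exactly $\lambda^+(f)/r$.

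\textbf{Upper bounds and non-attainment.} For the reverse inequality I would use the Yomdin/Burguet-type bound for surfaces,
\[
h_\mu(f_0)\le \lambda_1^+(f_0,\mu)\ \ \text{for measures outside the horseshoes},
\]
together with \eqref{(1.1)}-type estimates as $N\to\infty$. Every ergodic measure of $f_0$ is supported either on some $\Lambda_N$, or on the unperturbed zero-entropy part, or on the accumulation set; the accumulation set is the orbit of $p$ and the connection. This gives (1). For (3), a measure of maximal entropy would have to live on some $\Lambda_N$ with entropy $<\lambda^+(f)/r$, which is impossible; alternatively it would be a weak$^*$ limit of horseshoe measures, which concentrate on $\delta_p$ and have zero entropy.

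For (2), the Lyapunov exponent of the horseshoe measures is controlled by the snake geometry. Expansion along $k(N)$ iterates near $p$ is compensated by the contraction of the snake slope $\sim A(N)N$. I would compute that the averaged exponent is $\tfrac{\log N}{N_*+k(N)}+o(1)$, which again tends to $\lambda^+(f)/r$ without being attained.

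\textbf{Main obstacle.} I expect the hardest step to be the global upper bound: showing that no ergodic measure has entropy (or exponent) reaching $\lambda^+(f)/r$. This needs uniform control of $\|Df_0^n\|$ through infinitely many perturbations and a clean classification of ergodic measures of $f_0$. I would handle it by making the supports of the bumps pairwise disjoint and wandering for the unperturbed dynamics. Then each orbit meets at most one bump family, and the reparametrization bound applies horseshoe by horseshoe.
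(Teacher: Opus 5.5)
The paper only quotes this result from \cite{B14}, so your sketch has to stand on its own. Your lower bound in (1) via the horseshoes $\Lambda_N$ is fine; it is the same computation as in the paper's Newhouse-domain argument. Two steps fail, however.

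\textbf{Item (2).} This cannot come out of your construction. You normalise $\lambda^+(f)=\log|\chi_u(p)|$ at the saddle $p$ and keep every perturbation away from $p$. So $p$ is still a hyperbolic fixed point of $f_0$ with the same derivative, and $\delta_p\in\mathcal M(D,f_0)$ satisfies $\lambda_1(f_0,\delta_p)=\lambda^+(f)>\lambda^+(f)/r$. Hence $\sup_{\mu\in\mathcal M(D,f_0)}\lambda_1(f_0,\mu)\ge\lambda^+(f)$. Computing exponents of horseshoe measures says nothing about this supremum. What your computation does support is weaker: measures whose entropy is close to $h_{\mathrm{top}}(f_0)$ have top exponent close to $\lambda^+(f)/r$ while accumulating on $\delta_p$. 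That is all the paper needs in the proof of Theorem \ref{Theorem B}. Since a hyperbolic saddle realising $\lambda^+(f)$ persists under perturbation, (2) should presumably be read in this restricted sense.

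\textbf{Upper bound in (1) and item (3).} This is the more serious gap. Both rest on your claim that every ergodic measure of $f_0$ lives on a single $\Lambda_N$, on the zero-entropy part, or on the loop. The mechanism you give for this does not work.
\begin{itemize}
\item Being wandering for $f$ is irrelevant for $f_0$, whose perturbation is designed to create recurrence.
\item All your snakes are folds of the same curve $W^u(p)$ over the same interval of tangencies. In the linearising chart, a point at height $\chi_u^{-k}y$ above $I$ is, after $k$ iterates, close to $W^u_{loc}(p)$ at height $y$, and $y$ alone decides which snake it follows next.
\item The strip following the $i$-th snake (with $N_i$ oscillations) crosses, about $N_i$ times, every horizontal band at a height $\chi_u^{-k}y<A(N_i)$ over that snake's $x$-range.
\end{itemize}
So orbits pass from each snake region to every other one. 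Within one snake they also return with every time $N_*+k$, $k\ge k(N_i)$, not only $N_*+k(N_i)$. The maximal invariant set near the loop is therefore one interacting system with unbounded return times accumulating on $p$, not a disjoint union of the $\Lambda_N$.

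Consequently the step ``a maximal measure would have to live on some $\Lambda_N$'' has no basis. Your ``Yomdin/Burguet-type bound'' $h_\mu(f_0)\le\lambda_1^+(f_0,\mu)$ is just Ruelle's inequality and cannot produce the factor $1/r$.

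What is missing is a global count of first returns to the tangency region, i.e.\ the loop series $\sum_i N_i\sum_{k\ge k(N_i)}e^{-h(N_*+k)}$ of a countable-state Markov model. Heuristically, its $i$-th term at $h=\lambda^+(f)/r$ is of order $N_iA(N_i)^{1/r}=(N_i^rA(N_i))^{1/r}$, which is controlled by the $C^r$ size of the $i$-th bump. The series diverges for $h<\lambda^+(f)/r$. You then have to show that it stays $<1$ at $h=\lambda^+(f)/r$, using the $C^r$-smallness quantitatively. That one estimate gives both $h_{\mathrm{top}}(f_0)=\lambda^+(f)/r$ and the absence of a maximal measure (transience).

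\textbf{Minor point.} $\sum_N\varepsilon_N<\infty$ is incompatible with $\log\varepsilon_N=o(\log N)$ over all $N$. With disjoint supports you only need $\sup\varepsilon_N$ small; alternatively, index the bumps by a sparse sequence $N_i$.
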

 
 If $d=2$, by Theorem \ref{Theorem 4.1} and Theorem \ref{Theorem A}, for any $1<r<+\infty$, there exists a sequence $\left\{f_{n}\right\}_{n\in\mathbb N^+}$ of $C^r$ diffeomorphisms\:(on $D$) converging $C^r$ to $f\in\mathrm{Diff}^\infty(D)$ with $h_{\mathrm{top}}(f)=0$ and a sequence $\left\{\mu_{n}\right\}_{n\in\mathbb N^+}$ of $\left\{f_{n}\right\}_{n\in\mathbb N^+}$-ergodic measures converging to an $f$-invariant measure $\mu$, such that
$$\lim_{n\to+\infty}h_{\mu_{n}}(f_{n})=h_{\mu}(f)+\frac{\lambda_1^+(f,\mu)}{r},$$
$$\lim_{n\to+\infty}\lambda_1^+(f_{n},\mu_n)=\frac{\lambda^+(f)}{r}<\lambda^+_{1}(f,\mu)=\lambda^+(f):=\lim_{n\to+\infty}\frac{1}{n}\log^+\left\|Df^n\right\|_{0}.$$
 Moreover, for any $n\in\mathbb N^+$, $h_{\mu_n}(f_n)\ge \frac{\lambda^+_{1}(f,\mu)}{r}-\frac{1}{n}$. This implies that $$\limsup_{n\to+\infty}d^u_{\max}(f_n,\mu_n)=1,$$
$$\lim_{n\to+\infty}h_{\mu_{n}}(f_{n})=h_{\mu}(f)+\frac{1}{r-1}(\lambda_1^+(f,\mu)-\lim_{n\to+\infty}\lambda_1^+(f_n,\mu_n)).$$

If $d=2m$, by considering $$\mathrm{Diff}^r(D^{m})\ni \underbrace{f_{n}\times\cdots\times f_{n}}_{m}\xrightarrow{C^r} \underbrace{f\times\cdots\times f}_{m}\in \mathrm{Diff}^\infty(D^m),$$ 
and 
$$\mathcal M(D^m,\underbrace{f_{n}\times\cdots\times f_{n}}_{m})\ni \underbrace{\mu_{n}\times\cdots\times \mu_{n}}_{m}\to \underbrace{\mu\times\cdots\times \mu}_{m}\in \mathcal M(D^m,\underbrace{f\times\cdots\times f}_{m}),$$ 
as $n\to+\infty$, we have
$$\limsup_{n\to+\infty}h_{\underbrace{\mu_{n}\times\cdots\times \mu_{n}}_{m}}(\underbrace{f_{n}\times\cdots\times f_{n}}_{m})= h_{\underbrace{\mu\times\cdots\times \mu}_{m}}(\underbrace{f\times\cdots\times f}_{m})+m\frac{\lambda_1^+(f,\mu)}{r}.$$
By the definitions of $(D^m,\underbrace{f\times\cdots\times f}_{m})\:\mathrm{and}\:\underbrace{\mu\times\cdots\times \mu}_{m}$, we have
$$\lambda^+(f)=\lambda^+_{1}(f,\mu)\le \lambda^+_{1}(\underbrace{f\times\cdots\times f}_{m},\underbrace{\mu\times\cdots\times \mu}_{m})\le \lambda^+(\underbrace{f\times\cdots\times f}_{m})=\lambda^+(f).$$
For $n\in\mathbb N^+$ large enough, we have
$$d_{\mathrm{max}}^u(f_n,\mu_n)=1.$$
It implies that
$$d_{\mathrm{max}}^u(\underbrace{f_n\times\cdots\times f_n}_{m},\underbrace{\mu_n\times \cdots\times \mu_n}_{m})=m.$$
Therefore,
\begin{equation}
\begin{aligned}
&\limsup_{n\to+\infty}h_{\underbrace{\mu_{n}\times\cdots\times \mu_{n}}_{m}}(\underbrace{f_{n}\times\cdots\times f_{n}}_{m})\\
&= h_{\underbrace{\mu\times\cdots\times \mu}_{m}}(\underbrace{f\times\cdots\times f}_{m})\\
&+\limsup_{n\to+\infty}d^u_{\max}(\underbrace{f_{n}\times\cdots\times f_{n}}_{m},\underbrace{\mu_n\times \cdots\times \mu_n}_{m})\frac{\lambda^+_{1}(\underbrace{f\times\cdots\times f}_{m},\underbrace{\mu\times\cdots\times \mu}_{m})}{r}.
\end{aligned}
\end{equation}
Similarly, one has
$$\lim_{n\to+\infty}h_{\mathrm{top}}(\underbrace{f_{n}\times\cdots \times f_n}_{m})= h_{\mathrm{top}}(\underbrace{f\times\cdots \times f}_{m})+m\frac{\lambda^+(\underbrace{f\times\cdots \times f}_{m})}{r}.$$

It is similar to prove the case of $d=2m+1$, by considering $$\mathrm{Diff}^r(\mathbb S^1\times D^{m})\ni \theta\times\underbrace{f_{n}\times\cdots\times f_{n}}_{m}\xrightarrow{C^r} \theta\times\underbrace{f\times\cdots\times f}_{m}\in \mathrm{Diff}^\infty(\mathbb S^1\times D^m),$$ and 
\begin{equation*}
\begin{aligned}
&\mathcal M(\mathbb S^1\times D^m,\theta\times \underbrace{f_{n}\times\cdots\times f_{n}}_{m})\ni \mathrm{Leb}\times\underbrace{\mu_{n}\times\cdots\times \mu_{n}}_{m}\\
&\to\mathrm{Leb}\times\underbrace{\mu\times\cdots\times \mu}_{m}\in \mathcal M(\mathbb S^1\times D^m,\theta\times\underbrace{f\times\cdots\times f}_{m}),
\end{aligned}
\end{equation*}
as $n\to+\infty$, where $\theta$ denotes the irrational rotation on the circle.

It is similar to prove the part (2) of Theorem \ref{Theorem B}, by considering $$\mathrm{Diff}^r(D\times\mathbb S^{d-2})\ni f_n\times\underbrace{\theta\times\cdots\times \theta}_{d-2}\xrightarrow{C^r} f\times\underbrace{\theta\times\cdots\times \theta}_{d-2}\in \mathrm{Diff}^\infty(D\times\mathbb S^{d-2}),$$ and 
\begin{equation*}
\begin{aligned}
&\mathcal M(D\times\mathbb S^{d-2},f_n\times\underbrace{\theta\times\cdots\times \theta}_{d-2})\ni \mu_n\times\underbrace{\mathrm{Leb}\times\cdots\times \mathrm{Leb}}_{d-2}\\
&\to\mu\times\underbrace{\mathrm{Leb}\times\cdots\times \mathrm{Leb}}_{d-2}\in \mathcal M(D\times\mathbb S^{d-2},f\times\underbrace{\theta\times\cdots\times \theta}_{d-2}),
\end{aligned}
\end{equation*}
as $n\to+\infty$, where $\theta$ denotes the irrational rotation on the circle.
\bigskip
\section*{Declarations}

\noindent\textbf{Funding.} Wanshan Lin is supported by the National Natural Science Foundation of China (No. 124B2010). Xueting Tian is supported by the National Natural Science Foundation of China (No. 12471182).

\noindent\textbf{Conflict of interest.} On behalf of all authors, the corresponding author states that there is no conflict of interest.

\noindent\textbf{Data availability.} On behalf of all authors, the corresponding author states that the manuscript has no associated data.


\newpage
\begin{thebibliography}{99}

\bibitem{B72} Rufus Bowen, Entropy-expansive maps, Trans. Amer. Math. Soc. 164 (1972), 323–331. MR 285689

\bibitem{Bu08}David Burguet, A proof of Yomdin-Gromov’s algebraic lemma, Israel J. Math. 168 (2008), 291–316. MR 2448063

\bibitem{B12}David Burguet, Symbolic extensions in intermediate smoothness on surfaces, Ann. Sci. Éc. Norm. Supér. (4) 45 (2012), no. 2,
337–362. MR 2977622

\bibitem{B17}David Burguet, Usc/fibred entropy structure and applications, Dyn. Syst. 32 (2017), no. 3, 391–409. MR 3669808

\bibitem{B23}David Burguet, Maximal measure and entropic continuity of Lyapunov exponents for $C^r$ surface diffeomorphisms with large
entropy, Ann. Henri Poincaré 25 (2024), no. 2, 1485–1510. MR 4703423

\bibitem{BL21} David Burguet and Gang Liao, Symbolic extensions for 3-dimensional diffeomorphisms, J. Anal. Math. 145 (2021),
no. 1, 381–400. MR 4361910

\bibitem{B97}Jérôme Buzzi, Intrinsic ergodicity of smooth interval maps, Israel J. Math. 100 (1997), 125–161. MR 1469107

\bibitem{B14}Jérôme Buzzi, $C^r$ diffeomorphisms with no maximal entropy measure, Ergodic Theory Dynam. Systems 34 (2014), no.6, 1770-1793. MR 3272770 

\bibitem{BCS22}Jérôme Buzzi, Sylvain Crovisier, and Omri Sarig, Continuity properties of Lyapunov exponents for surface diffeomorphisms, Invent. Math. 230 (2022), no.2, 767-849. MR4493327

\bibitem{CY16} Yongluo Cao and Dawei Yang, On Pesin’s entropy formula for dominated splittings without mixed behavior, J. Differ-
ential Equations 261 (2016), no. 7, 3964–3986. MR 3532061

\bibitem{DN05} T. Downarowicz and S.E. Newhouse, Symbolic extensions and smooth dynamical systems, Invent. Math. 160 (2005), no.3, 453-499. MR2178700


\bibitem{P12} Lorenzo J. Díaz, Todd Fisher, Maria José Pacifico, and José L. Vieitez, Entropy-expansiveness for partially hyperbolic
diffeomorphisms, Discrete Contin. Dyn. Syst. 32 (2012), no. 12, 4195–4207. MR 2966742

\bibitem{LY85-2} F. Ledrappier and L.-S. Young, The metric entropy of diffeomorphisms. II. Relations between entropy, exponents and
dimension, Ann. of Math. (2) 122 (1985), no. 3, 540–574. MR 819557

\bibitem{LSW15}
Gang Liao, Wenxiang Sun, and Shirou Wang, Upper semi-continuity of entropy map for nonuniformly hyperbolic
systems, Nonlinearity 28 (2015), no. 8, 2977–2992. MR 3382593


\bibitem{L13}
Gang Liao, Marcelo Viana, and Jiagang Yang, The entropy conjecture for diffeomorphisms away from tangencies, J.
Eur. Math. Soc. (JEMS) 15 (2013), no. 6, 2043–2060. MR 3120734

\bibitem{LMZ24} Chiyi Luo, Wenhui Ma, and Yun Zhao, Upper semi-continuity of metric entropy for diffeomorphisms with dominated splitting, (2025). arXiv:2412.04953. 

\bibitem{LY25-2} Chiyi Luo and Dawei Yang, Upper semi-continuity of metric entropy for $C^{1,\alpha}$ diffeomorphisms, (2025). arXiv: 2504.07746. 

\bibitem{N79}Sheldon E. Newhouse, The abundance of wild hyperbolic sets and nonsmooth stable sets for diffeomorphisms, Inst. Hautes Études Sci. Publ. Math. no.50 (1979), 101-151. MR2178700


\bibitem{N89}Sheldon E. Newhouse, Continuity properties of entropy, Ann. of Math. (2) 129 (1989), no. 2, 215–235. MR 986792

\bibitem{OB25}S. Ben Ovadia and David Burguet, Generalized u-Gibbs measures for $C^{\infty}$ diffeomorphisms, (2025). arXiv: 2506.18238.

\bibitem{PY76}Ja. B. Pesin, Families of invariant manifolds that correspond to nonzero characteristic exponents, Izv. Akad. Nauk
SSSR Ser. Mat. 40 (1976), no. 6, 1332–1379, 1440. MR 458490

\bibitem{PT93}J. Palis Jr. and F. Takens, Hyperbolicity and sensitive chaotic dynamics at homoclinic bifurcations, Cambridge Studies in Advanced Mathematics. 35, Cambridge Unvi. Press, Cambridge (1993). MR1237641


\bibitem{Y87} Y. Yomdin, Volume growth and entropy, Israel J. Math. 57 (1987), no. 3, 285–300. MR 889979
\end{thebibliography}
\end{document}